\theoremstyle{plain}
\newtheorem{thm}{Theorem}
\newtheorem{lem}{Lemma}
\newtheorem{prop}{Proposition}
\newtheorem{cor}{Corollary}
\theoremstyle{definition}
\newtheorem{defn}{Definition}
\newtheorem{exmp}{Example}
\newtheorem{assum}{Assumption}
\newtheorem{rem}{Remark}
\newtheorem{prob}{Problem}
\newtheorem{alg}{Algorithm}
\newcommand{\fin}{\hfill$\Box$}
\newcommand{\n}{\normalfont}
\DeclareMathOperator{\diag}{diag}
\title{Families of moment matching based, structure preserving approximations for linear port Hamiltonian systems \thanks{This work is supported by the EPSRC grant "Control For Energy and Sustainability", grant reference EP/G066477/1.}}
\author{T. C. Ionescu\thanks{T. C. Ionescu is with the Department of Electrical and Electronic Engineering, Imperial College London, SW7 2AZ, London, UK. E-mail: {\tt t.ionescu@imperial.ac.uk}.}, A. Astolfi\thanks{A. Astolfi is with the Department of Electrical and Electronic Engineering, Imperial College London, SW7 2AZ, London, UK and with Dipartimento di Ingegneria Civile e Ingegneria Informatica, Universit\`{a} di Roma Tor Vergata, Roma, 00133, Italy., Universit\`{a} di Roma Tor Vergata, 00133 Roma, Italy. E-mail: {\tt a.astolfi@imperial.ac.uk}.}}
\begin{document}

\maketitle                        
                                         
\begin{abstract}
In this paper we propose a solution to the problem of moment matching with preservation of the port Hamiltonian structure, in the framework of time-domain moment matching. We characterize several families of parameterized port Hamiltonian models that match the moments of a given port Hamiltonian system, at a set of finite interpolation points. We also discuss the problem of Markov parameters matching for linear systems as a moment matching problem for descriptor representations associated to the given system, at zero interpolation points. Solving this problem yields families of parameterized reduced order models that achieve Markov parameter matching. Finally, we apply these results to the port Hamiltonian case, resulting in families of parameterized reduced order port Hamiltonian approximations.
\end{abstract}

{\bf Keywords:}                           
Model approximation, Model reduction, Physical models, Markov parameters, System order reduction.  

\section{Introduction}\label{intro}

Port Hamiltonian systems represent an important class of systems used in modelling, analysis and control of physical systems, see e.g. \cite{vdschaft-2000,ortega-vdschaft-maschke-escobar-2002}. These representations are used in lumped parameter system analysis and control stemming from e.g., mechanical systems, electrical systems, electromechanical systems, power systems. One of the main features of such systems is the passivity property, i.e., the internal energy of the system at some time is lower than or equal to the sum of the stored energy in the past and the externally supplied energy in the interval between the past and the present (see, e.g., \cite{vdschaft-2000}). Passivity is essential for stability analysis and controller synthesis represented by techniques such as Passivity Based Control (PBC) and Interconnection and Damping Assignment (IDA), see e.g., \cite{ortega-garciacanseco-2004}. For instance, in the case of large power systems, consisting of synchronous machines interconnected through transmission lines, the passivity (dissipativity) analysis of its port Hamiltonian modelling is important for the stability assessment of such systems and for further control design, see e.g., \cite{kundur-1994,giusto_ortega_stankovic_2006}.
However, physical modelling often leads to (port Hamiltonian) systems of high dimension, usually difficult to analyse and simulate and unsuitable for control design. Hence, model reduction is called for. In the systems and control literature there are many results on model order reduction with preservation of properties and/or structure. For example, passivity preserving model reduction is discussed in e.g., \cite{antoulas-SCL2005,sorensen-SCL2005,ionescu-fujimoto-scherpen-SCL2008,feldman-freund-1995,feldman-freund-1995} and structure preservation is considered in e.g., \cite{fernando-nicholson-1983,ionescu-fujimoto-scherpen-TAC2011} for symmetric systems, \cite{lall-krysl-marsden-PDNP2003} for mechanical systems, \cite{fujimoto-2006,hartmann-MMOD2009,polyuga-vdschaft-MTNS2008} for port Hamiltonian systems.

In the problem of model reduction moment matching techniques represent an efficient tool, see e.g. \cite{grimme-1997,byrnes-lindquist-SIAM2008,antoulas-2005,vandooren-1995,feldman-freund-1995,jaimoukha-kasenally-SIAM1997,antoulas-sorensen-1999} for a complete overview for linear systems. Using a numerical approach based on Krylov projection methods the (reduced order) model is obtained by efficiently constructing a lower degree rational function that approximates a given transfer function (assumed rational). The low degree rational function matches the given transfer function at various points in the complex plane. If the interpolation points are at zero, then the Pad\'e approximation problem is solved. If the interpolation points are finite, then the general rational interpolation problem is solved. In the case of multiple-input multiple output (MIMO) systems the problem is called tangential interpolation, i.e., finding an approximation that interpolates a transfer matrix at selected points along selected directions, see \cite{gallivan-vandendorpe-vandooren-SIAM2004,antoulas-ball-kang-willems-LINALG1990} for further details. If the interpolation points are at infinity, the problem is called partial realization and has been studied in e.g., \cite{mayo-antoulas-LAA2007} and references therein.

Alternatively, for single-input single-output (SISO) systems, a system theoretic, time-domain approach to moment matching has been taken in \cite{astolfi-TAC2010}. In short, the notion of moment of a linear, minimal system has been related to the unique solution of a Sylvester equation, see also, e.g., \cite{gallivan-vandendorpe-vandooren-MTNS2006,gallivan-vandendorpe-vandooren-JCAM2004}, for previous results. Furthermore, the moments are in one-to-one relation with the steady-state response (provided it exists) of the given system driven by a signal generator, which "contains" the interpolation points. The moments have also been connected to the solution of a dual Sylvester equation, and shown to be in one-to-one relation with the well-defined steady-state response of the given system driving a (generalized) signal generator. Used for model reduction, the time-domain approach yields simple and direct characterizations of all parameterized, reduced order models that match a prescribed set of moments of a given system at a set of {\it finite} interpolation points. The classes of reduced order models that achieve moment matching contain subclasses of models that meet additional constraints, i.e., the free parameters are useful for enforcing properties such as, e.g., passivity, stability or relative degree, irrespective of the choice of interpolation points.

Recently, in \cite{wolf-lohmann-eid-kotyczka-EJC2010} the rational interpolation problem for linear port Hamiltonian systems has been addressed using Krylov projection methods, yielding reduced order models that match the moments of the given port Hamiltonian system at a set of prescribed finite or infinite interpolation points. Improved procedures for MIMO systems have been developed in \cite{polyuga-vdschaft-AUT2010,gugercin-polyuga-beattie-vdschaft-ARXIV2011}, where a near-optimal port Hamiltonian approximation that satisfies a set of tangential interpolation conditions is proposed. Furthermore, in \cite{polyuga-vdschaft-ECC2009,polyuga-vdschaft-AUT2010} the partial realization problem for port Hamiltonian systems has been considered. The procedure therein involves finding a change of coordinates such that the Hamiltonian becomes the square of the norm of the state vector, since a direct application of the Krylov methods does not yield a port Hamiltonian approximant. Then the Krylov projections are computed and applied to the system in the new coordinates, resulting in a port Hamiltonian model that matches the Markov parameters of the given system. Note that the Hamiltonian of the approximant is again the square of the norm of the reduced order state vector.

In this paper we study the problem of computing low order approximations that match a set of prescribed moments at a set of finite or infinite points, of a given SISO, port Hamiltonian, linear system and preserve the port Hamiltonian structure, in the framework of time-domain moment matching, as described in the following problem. 
\\
\begin{prob}[General Approximation Problem]\label{prob_redmod_prose}\n
Given a linear, port Hamiltonian, SISO system,
\begin{enumerate}
\item compute {\it the families} of {\it parameterized} reduced order models that match the moments of a given port Hamiltonian system at a set of finite or infinite interpolation points (in this case the models match a set of prescribed Markov parameters).

\item characterize the families of reduced order models which satisfy the following properties: the models match the moments of the given port Hamiltonian system and {\it preserve the port Hamiltonian structure}. In other words, from the classes of models that achieve moment matching we find the reduced order models that inherit the port Hamiltonian form.\fin
\end{enumerate}
\end{prob}

In the case of matching at finite interpolation points we obtain families of parameterized {\it state-space}, reduced order port Hamiltonian models that approximate the given port Hamiltonian system. All the reduced order state-space models share the same transfer function. In the SISO case, the state space parameters are used to enforce additional structure constraints such as diagonal Hamiltonian function, diagonal dissipation, etc. However, at the moment we are not able to determine which of the original variables and their meaning is retained in the reduced order model. In the MIMO case the free parameters can be used to define appropriate directions such that the reduced order models satisfy prescribed sets of tangential interpolation conditions. Regarding the computational aspect, we establish connections between the models from the families of {\it parameterized} port Hamiltonian reduced order models that achieve moment matching and the counterpart approximations obtained using Krylov projections, i.e., there is a relation between the free parameter and the Krylov projector.

Since, to the best of the authors' knowledge, the existing moment matching techniques do not yield an a priori approximation error bound and assuming the interpolation points are free parameters, we can use existing results from the literature to choose these parameters in order to improve the accuracy of the approximation, e.g., in the sense that the $H_2$ norm of error decreases, although it is not the scope of the paper.

We study the problem of Markov parameters matching, i.e., the partial realization problem, by extending the time-domain moment matching results to the case of interpolation points at infinity. We define the notion of moment for a class of linear, descriptor representations, associated to the transfer function of the given system, in terms of the (unique) solutions of {\it generalized} Sylvester equations and their dual counterparts. In particular, the Markov parameters of a given system are the moments of the associated descriptor realization at zero. Furthermore, we relate the moments to the steady-state response, provided it exists, of the descriptor realization driven by/driving signal generators. Performing model reduction we obtain several families of parameterized, (descriptor) reduced order models that match a set of prescribed moments of the descriptor realization associated to the transfer function of a given linear system. In particular, matching at zero yields classes of reduced order models that match the Markov parameters of the given linear system. As in the rational interpolation case, we establish relations between the {\it parameter} which define the family of the port Hamiltonian approximation that matches a set of Markov parameters and a Krylov projector, giving insight into the computational issue of the proposed solution.
\\
Finally, we apply these results to linear port Hamiltonian systems, resulting in {\it families} parameterized of state-space, reduced order port Hamiltonian models that match the Markov parameters of the given port Hamiltonian system. Note that the free parameters can be used to enforce additional physical structure to the approximant. We mention that, to our knowledge, there is no structured procedure to determine the number of interpolation points needed. However, based on \cite{ionescu-scherpen-iftime-astolfi-MTNS2012} the number of interpolation points can be related to the order of the dynamics associated to the higher order Hankel singular values of the given system. To conclude, we mention that the scope of the paper is not to address computational issues, but to propose a system-theoretical based framework for port Hamiltonian reduced order modelling, consistent with the existing theory and suitable for future \textit{nonlinear} extension.

To give the complete picture, we compute several (equivalent) families of reduced order port Hamiltonian models, starting from the unique solution of a Sylvester equation and its {\it dual counterpart}. The families exhibit the same type of properties, without additional advantages or disadvantages, obtained however through different parameterizations. This argument is consistent with the rational Krylov projection modelling where the left and the right projection are dual to each other.

The paper is organized as follows. In Section \ref{sect_prel} we give a brief overview of the definition of moments and moment matching for linear port Hamiltonian systems, as well as of the family of parameterized reduced order models that achieve moment matching at a set of finite interpolation points. We also recall the procedures to obtain a port Hamiltonian approximation using Krylov projections for both SISO and MIMO systems. We present existing choices of interpolation points which yield accurate (in some sense) approximations. The section is completed with the formulation of the model reduction problem to be solved, a particular case of the \ref{prob_redmod_prose} for matching at finite interpolation points. In Section \ref{sect_MarkovM} we study the general problem of time-domain moment matching for a class of descriptor representations associated to the transfer function of a given linear system. In particular we show that matching the moments of the descriptor realization at zero is equivalent to matching the Markov parameters of the given system. Moreover, we obtain the classes of reduced order models that match a set of the Markov parameters of the given system. These models are then related to their Krylov projection counterparts. Based on Section \ref{sect_prel}, in Section \ref{sect_pH_match} we discuss the problem of moment matching at a set of finite interpolation points, with preservation of the port Hamiltonian structure, and characterize the port Hamiltonian reduced order models. Furthermore, we give a necessary and sufficient condition for a reduced order model that achieves moment matching to be a port Hamiltonian model. We describe the families of parameterized state-space port Hamiltonian approximations, the free parameters of which can be used to enforce further properties. We also present a procedure that allows the computation of such a family of models. Based on the results from Section \ref{sect_MarkovM}, in Section \ref{sect_pH_MarkovM} we obtain the classes of reduced order, port Hamiltonian models that match the Markov parameters of a given port Hamiltonian system, proposing a procedure that allows the computation of such families of models. In Section \ref{sect_example}, we give an example that illustrates the results. The paper is completed by a Conclusions section.

Preliminary results have been presented in \cite{ionescu-astolfi-CDC2011}. This paper constitutes a complete version of the aforementioned conference paper, with results presented, discussed and proven in detail. Furthermore, in this paper we develop new results based on the solution of the dual Sylvester equation resulting in an extended system-theoretic characterization of the moment matching for port Hamiltonian systems. In addition, a discussion of the multiple-input multiple-output case is given. Moreover, connections with previous work and results from the literature, especially Krylov projection-based modelling, are established and examples are given to illustrate the theory.

Finally, note that this paper is a preliminary step to develop a time-domain moment matching based model reduction theory for nonlinear port Hamiltonian systems, see e.g., \cite{ionescu-astolfi-NOLCOS2013} for the first results in that direction.

\paragraph*{Notation.} $\mathbb{R}$ is the set of real numbers and $\mathbb{C}$ is the set of complex numbers. $\mathbb{C}^0$ is the set of complex numbers with zero real part and $\mathbb{C}^-$ denotes the set of complex numbers with negative real part. $A^*\in\mathbb{C}^{n\times m}$ denotes the transpose and complex conjugate of the matrix $A\in\mathbb{C}^{m\times n}$. If $A$ is a real matrix, then $A^*$ is the transpose of $A$. $\sigma(A)$ denotes the set of eigenvalues of the matrix $A$ and $\emptyset$ denotes the empty set. $0_{n\times \nu}\in\mathbb{C}^{n\times\nu}$ is the matrix with all elements equal to $0$. If $n=1$, then $0_{1\times\nu}=[0,...,0]\in\mathbb{R}^{1\times\nu}$ and $0_{\nu}=0_{1\times\nu}^*$.  $\delta(t)$ denotes the Dirac $\delta$-function. The triple $(A,B,C)$ denotes the linear, time-invariant system described by the equations $\dot x=Ax+Bu,\ y=Cx$, with input $u(t)$ from a well defined set of inputs, output $y(t)$ and state $x(t)$.

\section{Preliminaries}\label{sect_prel}

Let $J\in\mathbb{R}^{n\times n}$ be a skew symmetric matrix and $R\in\mathbb{R}^{n\times n}$, $Q\in\mathbb{R}^{n\times n}$ be two symmetric matrices. Consider the single-input, single-output, minimal, port Hamiltonian system
\begin{equation}\label{pH_system} \begin{split}
& \dot x = (J-R)Qx+Bu, \\& y =B^*Qx,
\end{split}\end{equation} where $x(t)\in\mathbb{R}^n$, $u(t)\in\mathbb{R}$, $y(t)\in\mathbb{R}$ and $B\in\mathbb{R}^{n}$. The Hamiltonian is ${\mathcal H}(x)=\frac{1}{2}x^*Qx$. The transfer function of system (\ref{pH_system}) is given by $K:\mathbb{C}\to\mathbb{C},\ K(s)=B^*Q(sI-(J-R)Q)^{-1}B$. %
Let $s_i\in\mathbb{C}$, $i=1,...,\nu$ be such that $s_i\notin\sigma((J-R)Q)$. The moments of (\ref{pH_system}) at $s_i$ are $\eta_0(s_i),\eta_1(s_i),\dots$, with $\eta_k(s_i)=\frac{(-1)^k}{k!}\left.\frac{d^k K(s)}{ds^k}\right|_{s=s_i}$. Note that the moment $\eta_k(s_i)$ represents the $k$-th coefficient of the Taylor expansion of $K(s)$ at $s_i$.
Without loss of generality, throughout the paper we assume that $k=0$.

\subsection{Time-domain moment matching}\label{sect_time}

In this section we give a brief overview of a notion of moment in the time-domain setting, see \cite{astolfi-TAC2010} for a more detailed analysis. Based on this notion, families of parameterized reduced order models are developed.

\noindent Consider the linear system (\ref{pH_system}) and let the matrices $S\in\mathbb{R}^{\nu\times\nu}$ and $L\in\mathbb{R}^{1\times\nu}$, and ${\mathcal Q}\in\mathbb{R}^{\nu\times\nu}$ and ${\mathcal R}\in\mathbb{R}^\nu$ be such that the pair $(L,S)$ is observable and the pair $({\mathcal Q},{\mathcal R})$ is controllable, respectively. Consider the Sylvester equation
\begin{equation}\label{eq_Sylvester}
(J-R)Q\Pi+BL=\Pi S,
\end{equation} in the unknown $\Pi\in\mathbb{C}^{n\times\nu}$ and its dual
\begin{equation}\label{eq_Sylvester_Y}
{\mathcal Q}\Upsilon=\Upsilon(J-R)Q+{\mathcal R} B^*Q,
\end{equation} in the unknown $\Upsilon\in\mathbb{C}^{\nu\times n}$.
Assume that $\sigma(A)\cap\sigma(S)=\emptyset$. Since the system (\ref{pH_system}) is minimal, the Sylvester equation (\ref{eq_Sylvester}) has a unique solution $\Pi$ and ${\rm rank}\ \Pi =\nu$. Similarly, if $\sigma(A)\cap\sigma({\mathcal Q})=\emptyset$, then equation (\ref{eq_Sylvester_Y}) has a unique solution $\Upsilon$ and ${\rm rank}\ \Upsilon=\nu$. (see e.g., \cite{desouza-bhattacharyya-LAA1981}).
\begin{defn}\label{def_PI} \mbox{}\\
\vspace{-0.5cm}
\begin{enumerate}

\item Let $\phi=[\phi_1\ \phi_2\ ...\ \phi_\nu]\in\mathbb{C}^{1\times\nu}$ be such that
\begin{equation*}\label{eq_CPI}
\phi=B^*Q\Pi.
\end{equation*} We call the moments of system (\ref{lin_sys}) at $\sigma(S)$ the elements $\phi_i$, $i=1,...,\nu$. The interpolation points are the eigenvalues of $S$, i.e., $\{s_1,s_2,...,s_\nu\}=\sigma(S)$.

\item Let $\varphi=[\varphi_1\ \varphi_2\ ...\ \varphi_\nu]^*\in\mathbb{C}^{\nu}$ be such that
\begin{equation*}\label{eq_YB}
\varphi=\Upsilon B.
\end{equation*} We call the moments of system (\ref{lin_sys}) at $\sigma(Q)$ the elements $\varphi_i$, $i=1,...,\nu$. The interpolation points are the eigenvalues of ${\mathcal Q}$, i.e., $\{s_1,s_2,...,s_\nu\}=\sigma({\mathcal Q})$.\fin
\end{enumerate}
\end{defn}
The following result establishes the connection between the notions of moment from Definition \ref{def_PI} and the well-defined, steady-state response of the interconnection of (\ref{pH_system}) with a signal generator.
\begin{thm}\label{thm_mom_steady}\cite{astolfi-TAC2010,astolfi-CDC2010} The following statements hold.
\begin{enumerate}

\item Consider system (\ref{pH_system}) and let $S$ be such that $\sigma(S)\subset\mathbb{C}^0$. Assume that $\sigma((J-R)Q)\subset\mathbb{C}^-$. Let
\begin{equation}\label{sig_generator}
\dot \omega = S\omega,\ \omega(0)\ne 0,
\end{equation} with $\omega(t)\in\mathbb{R}^\nu$. Consider the interconnection of systems (\ref{pH_system}) and (\ref{sig_generator}), with $u=L\omega$, where $L$ is such that the pair $(L,S)$ is observable. Then the moments $\phi$ of system (\ref{pH_system}) at $\sigma(S)$ are in one-to-one relation with the well-defined steady-state response of the output $y(t)=B^*Q x(t)$%
\footnote{By one-to-one relation between a set of $\nu$ moments $\eta(s_i)$, $i=1,...,\nu$ and the well-defined steady state response of the signal $y(t)$ we mean that the moments $\eta$ uniquely determine the steady-state response of $y(t)$. \\ Let $(A,B,C)$ be a linear system described by the equations $\dot x=Ax+Bu,\ y=Cx$, with $x(t)\in\mathbb{R}^n$ and $\sigma(A)\in\mathbb{C}^-$. Then $x(t)=x_p(t)+x_t(t)$, with $\displaystyle x_p(t)=\lim_{t_0\to -\infty}\int_{t_0}^t e^{A(t-\tau)}B u(\tau) d\tau$. $x_t(t)$ is the transient component of the state $x(t)$, i.e., $\displaystyle \lim_{t\to\infty} x_t(t)=0$ and $x_p(t)$ is the steady-state. Consequently, $y_p(t)=Cx_p(t)$ is the steady-state response of the linear system $(A,B,C)$.}%
of such interconnected system.

\item Consider the system (\ref{pH_system}) and let $\mathcal Q$ be such that $\sigma({\mathcal Q})\subset\mathbb{C}^0$. Consider the system
\begin{equation}\label{signal_gen_M} \begin{array}{ll}
\dot\omega = {\mathcal Q}\omega+{\mathcal R} v, \\ d=\omega+\Upsilon x,
\end{array}\end{equation}  where $\omega(t)\in\mathbb{R}^\nu,\ d(t)\in\mathbb{R}^\nu,\ v(t)\in\mathbb{R}$ and $\mathcal R$ is such that the pair $({\mathcal Q},{\mathcal R})$ is controllable. Consider the interconnection between the system (\ref{pH_system}) and the system (\ref{signal_gen_M}), with $v=y$. Assume that $\sigma((J-R)Q)\subset\mathbb{C}^{-}$, $x(0)=0$, $\omega(0)=0$ and $u(t)=\delta(t)$. Then the moments $\varphi$ of system (\ref{pH_system}) at $\sigma({\mathcal Q})$ are in one-to-one relation with the well-defined steady-state response of the output $d(t)$.\fin

\end{enumerate}
\end{thm}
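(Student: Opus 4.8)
The plan is to establish each of the two one-to-one correspondences by first identifying the invariant subspace associated with the steady state and then expressing the steady-state response explicitly in terms of the Sylvester solutions $\Pi$ and $\Upsilon$ and the data $(L,S)$, $(\mathcal{Q},\mathcal{R})$.

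For the first statement, I would begin by writing the interconnected system as $\dot{\omega}=S\omega$, $\dot{x}=(J-R)Qx+BL\omega$, an autonomous cascade on $\mathbb{R}^{\nu}\times\mathbb{R}^{n}$. Since $\sigma((J-R)Q)\subset\mathbb{C}^{-}$ and $\sigma(S)\subset\mathbb{C}^{0}$ are disjoint, the Sylvester equation (\ref{eq_Sylvester}) has the unique solution $\Pi$, and the graph of $\Pi$, i.e.\ the subspace $\{(\omega,\Pi\omega)\}$, is invariant under the cascade dynamics: differentiating $x-\Pi\omega$ and using (\ref{eq_Sylvester}) gives $\frac{d}{dt}(x-\Pi\omega)=(J-R)Q(x-\Pi\omega)$, which decays to zero because $(J-R)Q$ is Hurwitz. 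Hence $x(t)\to\Pi\omega(t)$, so the steady-state state response is $x_{\mathrm{ss}}(t)=\Pi\omega(t)$ and the steady-state output is $y_{\mathrm{ss}}(t)=B^{*}Q\Pi\,\omega(t)=\phi\,\omega(t)$. Since $(L,S)$ is observable and $\omega(0)\ne 0$, the components of $\omega(t)$ span the space of solutions of $\dot{\omega}=S\omega$ (equivalently $\omega(t)$ is a persistently exciting signal encoding $\sigma(S)$), so $y_{\mathrm{ss}}(t)$ determines $\phi=B^{*}Q\Pi$ uniquely and conversely; this is exactly the claimed bijection, and it remains only to invoke Definition \ref{def_PI} to identify $\phi$ with the moments at $\sigma(S)$ (the link between $B^{*}Q\Pi$ and the Taylor coefficients $\eta_{k}(s_i)$ being the content of the cited references, which we may assume).

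For the second statement I would argue dually. With $u(t)=\delta(t)$ and $x(0)=0$, the response of (\ref{pH_system}) is $x(t)=e^{(J-R)Qt}B$, so $y(t)=B^{*}Qe^{(J-R)Qt}B\to 0$ as $t\to\infty$ since $(J-R)Q$ is Hurwitz; thus after the impulse the pair $(x,\omega)$ evolves autonomously as $\dot{x}=(J-R)Qx$, $\dot{\omega}=\mathcal{Q}\omega+\mathcal{R}B^{*}Qx$. Consider $z=\omega-(-\Upsilon x)=\omega+\Upsilon x$; wait, more carefully: using the dual Sylvester equation (\ref{eq_Sylvester_Y}), $\mathcal{Q}\Upsilon-\Upsilon(J-R)Q=\mathcal{R}B^{*}Q$, one computes $\frac{d}{dt}(\omega+\Upsilon x)=\mathcal{Q}\omega+\mathcal{R}B^{*}Qx+\Upsilon(J-R)Qx=\mathcal{Q}\omega+\mathcal{Q}\Upsilon x=\mathcal{Q}(\omega+\Upsilon x)$, so $d(t)=\omega(t)+\Upsilon x(t)$ satisfies $\dot{d}=\mathcal{Q}d$ exactly, with $\sigma(\mathcal{Q})\subset\mathbb{C}^{0}$. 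The transient part of $\omega$ is carried by $-\Upsilon x(t)\to 0$, so the steady-state response of $d$ is $d_{\mathrm{ss}}(t)=e^{\mathcal{Q}t}d(0^{+})=e^{\mathcal{Q}t}(\omega(0^{+})+\Upsilon x(0^{+}))=e^{\mathcal{Q}t}\Upsilon B=e^{\mathcal{Q}t}\varphi$, since the impulse drives $x$ from $0$ to $B$ while $\omega(0^{+})=0$. Controllability of $(\mathcal{Q},\mathcal{R})$ together with $\sigma(\mathcal{Q})\subset\mathbb{C}^{0}$ ensures that $e^{\mathcal{Q}t}\varphi$ determines $\varphi=\Upsilon B$ and conversely, giving the bijection; Definition \ref{def_PI} then identifies $\varphi$ with the moments at $\sigma(\mathcal{Q})$.

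The main obstacle — and the only genuinely delicate point — is the careful bookkeeping of initial conditions and the decomposition into steady-state and transient parts in the second statement, where the impulsive input forces a jump in $x$ at $t=0$ and one must verify that $\omega$ inherits no jump while $x(0^{+})=B$; relatedly, one should confirm that $e^{\mathcal{Q}t}\varphi$ is indeed "well-defined" as a steady state in the sense of the footnote (bounded, no decaying component absorbed into it), which follows from $\sigma(\mathcal{Q})\subset\mathbb{C}^{0}$. In the first statement the analogous subtlety is milder: one just needs that the decomposition $x=x_{p}+x_{t}$ from the footnote coincides with $x=\Pi\omega+(x-\Pi\omega)$, which is immediate once the invariance of the graph of $\Pi$ is established. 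Both statements are essentially restatements of the standard Sylvester-equation/center-manifold argument for linear regulator theory, so beyond the initial-condition accounting the proof is routine; I would keep it brief and lean on \cite{astolfi-TAC2010,astolfi-CDC2010} for the moment/Taylor-coefficient identification.
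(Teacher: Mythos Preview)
The paper does not supply its own proof of this theorem: it is quoted from \cite{astolfi-TAC2010,astolfi-CDC2010} and left unproved. Your reconstruction is correct and is in the same spirit as the arguments the paper \emph{does} give for the closely related descriptor-system results. For statement (1), your decomposition $x=\Pi\omega+(x-\Pi\omega)$ with $\frac{d}{dt}(x-\Pi\omega)=(J-R)Q(x-\Pi\omega)$ is exactly the device used in the paper's proof of Theorem~\ref{thm_Markov_time}. For statement (2), your time-domain computation showing $\dot d=\mathcal{Q}d$ with $d(0^{+})=\Upsilon B=\varphi$ is correct; the paper's only hint at a proof (the sketch for Theorem~\ref{thm_Markov_time_Y}) instead takes Laplace transforms of the analogous $d$-equation and reads off the moments from the partial-fraction structure of $D(\tau)=(\tau I-\mathcal{Q})^{-1}\mathcal{Q}\Upsilon B$. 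The two routes are equivalent and equally short.

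Two minor remarks. First, note the paper's footnote: ``one-to-one'' here means only that the moments uniquely determine the steady-state response, so you need not argue the converse recovery of $\phi$ from $y_{\mathrm{ss}}$ (and observability of $(L,S)$ is used upstream to guarantee uniqueness and full rank of $\Pi$, not for that recovery). Second, your handling of the impulse in statement (2) is the right bookkeeping: $x(0^{+})=B$, $\omega(0^{+})=0$, hence $d(0^{+})=\varphi$; since $\sigma(\mathcal{Q})\subset\mathbb{C}^{0}$, the signal $d(t)=e^{\mathcal{Q}t}\varphi$ has no decaying component and is itself the steady state.
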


Based on Definition \ref{def_PI}, we define families of parameterized models of order $\nu$ that match the moments of (\ref{pH_system}) at the interpolation points $\{s_1,...,s_\nu\}=\sigma(S)$ and  families of parameterized models of order $\nu$ that match the moments of (\ref{pH_system}) at $\{s_1,...,s_\nu\}=\sigma({\mathcal Q})$.

\begin{thm}\cite{astolfi-TAC2010,astolfi-CDC2010}\label{thm_redmod_CPi} The following statements hold.
\begin{enumerate}

\item Let the pair $(L,S)$ be observable and assume $\sigma(A)\cap\sigma(S)=\emptyset$. Let $\xi(t)\in\mathbb{R}^\nu$ and consider the family of linear models
\begin{equation}\label{model_gen_G} \Sigma_G:\ \left\{\begin{array}{ll}
\dot \xi=(S-GL)\xi+Gu, \\ \psi=B^*Q\Pi\xi,
\end{array}\right.\end{equation} parameterized in $G\in\mathbb{C}^\nu$, where $\Pi$ is the unique solution of (\ref{eq_Sylvester}). Assume $\sigma(S-GL)\cap\sigma(S)=\emptyset$. Let $\widehat\phi\in\mathbb{C}^{1\times \nu}$ be the moments of (\ref{model_gen_G}) at $\sigma(S)$. Then (\ref{model_gen_G}) describes a family of reduced order models of (\ref{lin_sys}), parameterized in $G$, achieving moment matching at $\sigma(S)$, i.e., $\phi=\widehat\phi$.

\item Let the pair $({\mathcal Q},{\mathcal R})$ be controllable and assume $\sigma(A)\cap\sigma({\mathcal Q})=\emptyset$. Let $\xi(t)\in\mathbb{R}^\nu$ and consider the family of linear models
\begin{equation}\label{model_gen_H} \Sigma_H:\ \left\{\begin{array}{ll}
\dot \xi=({\mathcal Q}-{\mathcal R}H)\xi+\Upsilon Bu, \\ \psi=H\xi,
\end{array}\right.\end{equation} parameterized in $H\in\mathbb{R}^{1\times\nu}$, where $\Upsilon$ is the unique solution of (\ref{eq_Sylvester_Y}). Assume $\sigma({\mathcal Q}-{\mathcal R}H)\cap\sigma({\mathcal Q})=\emptyset$. Let $\widehat\varphi\in\mathbb{C}^{1\times \nu}$ be the moments of (\ref{model_gen_H}) at $\sigma({\mathcal Q})$. Then (\ref{model_gen_H}) describes a family of reduced order models of (\ref{lin_sys}), parameterized in $H$, achieving moment matching at $\sigma({\mathcal Q})$, i.e., $\varphi=\widehat\varphi$.
\fin
\end{enumerate}
\end{thm}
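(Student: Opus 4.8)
The plan is to prove the two statements symmetrically, using the characterization of moments via the Sylvester equations from Definition~\ref{def_PI} together with the basic fact that the moments of a minimal system depend only on its transfer function at the interpolation points. I will carry out the details for statement~1 (the model $\Sigma_G$); statement~2 follows by the dual argument, replacing $\Pi$ and \eqref{eq_Sylvester} by $\Upsilon$ and \eqref{eq_Sylvester_Y}.

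First I would observe that, by the definition of moment, the moments $\widehat\phi$ of $\Sigma_G$ at $\sigma(S)$ are obtained as follows: let $P\in\mathbb{C}^{\nu\times\nu}$ be the unique solution (unique because $\sigma(S-GL)\cap\sigma(S)=\emptyset$, and square because both blocks have size $\nu$) of the Sylvester equation associated to $\Sigma_G$, namely
\begin{equation*}
(S-GL)P+GL=PS,
\end{equation*}
and then $\widehat\phi=B^*Q\Pi P$. The key step is to verify that $P=I_\nu$ solves this equation: substituting $P=I_\nu$ gives $(S-GL)+GL=S$, which holds identically. By uniqueness of the solution, $P=I_\nu$, hence $\widehat\phi=B^*Q\Pi=\phi$ by Definition~\ref{def_PI}, which is exactly the claimed moment matching. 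It remains to argue that $\Sigma_G$ is genuinely a model of order $\nu$, i.e.\ that the formula $\widehat\phi=B^*Q\Pi P$ is the correct expression for its moments; this is just the specialization of Definition~\ref{def_PI} to the triple $(S-GL,\ G,\ B^*Q\Pi)$, using observability/controllability of the relevant pairs, which one checks is inherited here (or one invokes the general construction from \cite{astolfi-TAC2010} directly).

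For statement~2, the dual Sylvester equation attached to $\Sigma_H=({\mathcal Q}-{\mathcal R}H,\ \Upsilon B,\ H)$ reads ${\mathcal Q}\widetilde P=\widetilde P({\mathcal Q}-{\mathcal R}H)+{\mathcal R}H$ in the unknown $\widetilde P\in\mathbb{C}^{\nu\times\nu}$, again uniquely solvable since $\sigma({\mathcal Q}-{\mathcal R}H)\cap\sigma({\mathcal Q})=\emptyset$; one checks $\widetilde P=I_\nu$ is a solution, so the moments of $\Sigma_H$ at $\sigma({\mathcal Q})$ equal $H\widetilde P^{-1}\Upsilon B=\Upsilon B=\varphi$, matching Definition~\ref{def_PI}(2).

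The only real subtlety — and the step I would be most careful about — is not a hard computation but the bookkeeping needed to be sure that the moments of the reduced model are really computed by the displayed $\nu\times\nu$ Sylvester equation with the \emph{same} $S$ (resp.\ ${\mathcal Q}$) on the right-hand side: this requires that one may take the same observable pair $(L,S)$ (resp.\ controllable pair $({\mathcal Q},{\mathcal R})$) to read off the moments of both the original system and of $\Sigma_G$ (resp.\ $\Sigma_H$), and that the nonresonance condition $\sigma(S-GL)\cap\sigma(S)=\emptyset$ (resp.\ its dual) is exactly what guarantees the reduced-model Sylvester equation is uniquely solvable. Once this is set up, the identity $P=I_\nu$ does all the work and matching is immediate; I expect no genuine obstacle beyond stating these hypotheses precisely and citing \cite{astolfi-TAC2010} for the underlying time-domain moment-matching framework.
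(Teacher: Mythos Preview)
Your approach is correct and is in fact the standard argument: the paper itself does not prove Theorem~\ref{thm_redmod_CPi} but merely cites it from \cite{astolfi-TAC2010,astolfi-CDC2010}, so there is no in-paper proof to compare against. The key observation that the reduced-model Sylvester equation $(S-GL)P+GL=PS$ is solved by $P=I_\nu$, together with the nonresonance hypothesis $\sigma(S-GL)\cap\sigma(S)=\emptyset$ to force uniqueness, is exactly the right mechanism, and your identification of where the hypotheses are used is accurate.

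One small slip in your write-up of statement~2: the moments of $\Sigma_H$ at $\sigma({\mathcal Q})$, according to Definition~\ref{def_PI}(2), are given by $\widetilde P\cdot(\Upsilon B)$, where $\widetilde P$ solves the dual Sylvester equation for the reduced model; they are \emph{not} $H\widetilde P^{-1}\Upsilon B$ (which would be a scalar, not a vector in $\mathbb{C}^\nu$). With $\widetilde P=I_\nu$ you still get $\widehat\varphi=\Upsilon B=\varphi$, so the conclusion survives, but the displayed formula should be corrected.
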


The next result shows that the models $\Sigma_G$ and $\Sigma_H$ achieve moment matching at $\sigma(S)$ and $\sigma({\mathcal Q})$ for all parameters $G$ and $H$, respectively.
\begin{prop}\label{prop_matching_happens}\cite{astolfi-TAC2010,astolfi-CDC2010} The following statements hold.
\begin{enumerate}
\item Consider the family of systems (\ref{model_gen_G}). Consider a $\nu$-th order model of system (\ref{pH_system}) at $\sigma(S)$ and let $\widehat K(s)$ be its transfer function. Then there exists a unique $G$ such that $\widehat K(s)=B^*Q\Pi(sI-S+GL)^{-1}G$.

\item Consider system (\ref{pH_system}) and the family of $\nu$-th order models (\ref{model_gen_H}). 
Let $\Upsilon$ be the (unique) solution of equation (\ref{eq_Sylvester_Y}). Then, for all $H$, any model in the family (\ref{model_gen_H}) is a reduced order model of system (\ref{pH_system}) achieving moment matching at $\sigma({\mathcal Q})$. \fin
\end{enumerate}
\end{prop}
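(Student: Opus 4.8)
The plan is to treat the two statements separately. For statement~1 the idea is to show that, after a change of state coordinates, every $\nu$-th order model achieving moment matching at $\sigma(S)$ has the form $\Sigma_G$; for statement~2 the idea is simply to exhibit, for each $H$, the solution of the reduced dual Sylvester equation attached to $\Sigma_H$ and to read off its moments.

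For statement~1, I would start from a minimal realization $(F,B_r,C_r)$ of the given model, so $\widehat K(s)=C_r(sI-F)^{-1}B_r$ with $\dim F=\nu$. Since the model matches the moments at $\sigma(S)$ its transfer function is analytic there, and, by minimality, $\sigma(F)$ is the set of poles of $\widehat K$; hence $\sigma(F)\cap\sigma(S)=\emptyset$ and the reduced Sylvester equation $F\Phi+B_rL=\Phi S$ has a unique solution $\Phi\in\mathbb{C}^{\nu\times\nu}$. Applying Definition~\ref{def_PI} to $(F,B_r,C_r)$, moment matching at $\sigma(S)$ is equivalent to $C_r\Phi=\phi$, with $\phi=B^*Q\Pi$. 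The key step is that $\Phi$ is invertible — this is the non-formal ingredient, and the place where the hypothesis that the model genuinely has order $\nu$ (minimality of the realization) is used; it is the invertibility fact established in \cite{astolfi-TAC2010}. Granting it, set $G:=\Phi^{-1}B_r$ and perform the coordinate change $\xi=\Phi z$: from $F\Phi+B_rL=\Phi S$ one gets $\Phi^{-1}F\Phi=S-GL$, and together with $\Phi^{-1}B_r=G$ and $C_r\Phi=\phi$ the transformed realization is exactly $(S-GL,\,G,\,\phi)$, that is $\Sigma_G$. Since the transfer function is invariant under coordinate changes, $\widehat K(s)=\phi(sI-S+GL)^{-1}G=B^*Q\Pi(sI-S+GL)^{-1}G$, which proves existence.

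For uniqueness, suppose $G$ and $G'$ both realize $\widehat K$ in this form. Eliminating the state by a rank-one (Woodbury) identity gives $\widehat K(s)=\bigl(\phi-\widehat K(s)L\bigr)(sI-S)^{-1}G$ for all $s\notin\sigma(S)$, and the same identity with $G'$; subtracting, $\bigl(\phi-\widehat K(s)L\bigr)(sI-S)^{-1}(G-G')\equiv 0$. If $L(sI-S)^{-1}(G-G')\equiv 0$ then $G=G'$, because $(L,S)$ is observable; otherwise $\widehat K(s)=\frac{\phi(sI-S)^{-1}(G-G')}{L(sI-S)^{-1}(G-G')}$ would be a quotient of polynomials of degree at most $\nu-1$, hence of McMillan degree strictly below $\nu$, contradicting the minimality of the given order-$\nu$ model. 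Therefore $G=G'$.

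For statement~2 the plan is a direct verification. Write $\Sigma_H$ as $(F_r,B_r,C_r)=(\mathcal Q-\mathcal R H,\ \Upsilon B,\ H)$, with $\Upsilon$ the unique solution of (\ref{eq_Sylvester_Y}). By the dual part of Definition~\ref{def_PI}, the moments of this model at $\sigma(\mathcal Q)$ are the entries of $\Upsilon_r B_r$, where $\Upsilon_r$ solves $\mathcal Q\,\Upsilon_r=\Upsilon_r F_r+\mathcal R C_r$. Substituting $F_r=\mathcal Q-\mathcal R H$ and $C_r=H$, this reads $\mathcal Q\,\Upsilon_r=\Upsilon_r\mathcal Q-\Upsilon_r\mathcal R H+\mathcal R H$, and $\Upsilon_r=I_\nu$ is visibly a solution (the last two terms cancel). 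Hence the moments of $\Sigma_H$ at $\sigma(\mathcal Q)$ are the entries of $I_\nu\,\Upsilon B=\Upsilon B=\varphi$, that is, the moments of (\ref{pH_system}) at $\sigma(\mathcal Q)$; and since identifying $\Upsilon_r=I_\nu$ uses only the structure of $\Sigma_H$ and no spectral separation between $\sigma(\mathcal Q)$ and $\sigma(\mathcal Q-\mathcal R H)$, this holds for every $H$. (When $\sigma(\mathcal Q)\cap\sigma(\mathcal Q-\mathcal R H)\neq\emptyset$ the equation for $\Upsilon_r$ has further solutions, but $\Upsilon_r=I_\nu$ is always among them and is the one singled out by the construction, so matching is unaffected.) Overall I expect the only genuine obstacle to be the invertibility of $\Phi$ in statement~1; every other step is a coordinate change, a rank-one matrix identity, or a one-line substitution.
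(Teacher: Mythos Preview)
The paper does not supply its own proof of this proposition: it is quoted verbatim from \cite{astolfi-TAC2010,astolfi-CDC2010} and left unproved here, so there is no in-paper argument to compare against. Your proposal is correct and is essentially the argument given in those references: for statement~1 you reduce an arbitrary order-$\nu$ matching model to the canonical form $\Sigma_G$ via the solution $\Phi$ of the reduced Sylvester equation $F\Phi+B_rL=\Phi S$ and the coordinate change $\xi=\Phi z$, with invertibility of $\Phi$ coming from controllability of $(F,B_r)$, observability of $(L,S)$ and $\sigma(F)\cap\sigma(S)=\emptyset$ (the de Souza--Bhattacharyya rank result \cite{desouza-bhattacharyya-LAA1981}); your Woodbury-based uniqueness argument is a clean variant of the injectivity argument in \cite{astolfi-TAC2010}. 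For statement~2 your direct verification that $\Upsilon_r=I_\nu$ solves the reduced dual Sylvester equation, whence $\widehat\varphi=\Upsilon B=\varphi$, is exactly the computation behind the cited result. The only caveat you already flagged is that ``$\nu$-th order model'' must be read as McMillan degree $\nu$ (minimal of dimension $\nu$) for the uniqueness half of statement~1; if $\widehat K$ had lower McMillan degree, both the invertibility of $\Phi$ and your degree-count contradiction would fail, and indeed uniqueness of $G$ would be lost.
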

Note that the results so far cannot be applied to the case of infinite interpolation points, i.e., do not apply to $s=\infty$.

\subsection{The MIMO case}\label{sect_MIMO_prel}

Consider a MIMO system of the form (\ref{pH_system}), with the input $u(t)\in\mathbb{R}^m$,  i.e., $B\in\mathbb{C}^{n\times m}$ and the transfer function $K(s)\in\mathbb{C}^{m\times m}$.  Let $S\in\mathbb{C}^{\nu\times\nu}$ and $L=[l_1\ l_2\ ...\ l_\nu]\in\mathbb{C}^{m\times\nu}$, $l_i\in\mathbb{C}^{m}$, $i=1,...,\nu$, be such that the pair $(L,S)$ is observable. Let $\Pi\in\mathbb{C}^{n\times\nu}$ be the unique solution of the Sylvester equation (\ref{eq_Sylvester}). Simple computations yield that the moments $\eta(s_i)=K(s_i)l_i$, $\eta(s_i)\in\mathbb{C}^p$, $i=1,...,\nu$ of system (\ref{pH_system}) at $\{s_1,...,s_\nu\}=\sigma(S)$ are in one-to-one relation with $C\Pi$. Consider the following system
\begin{equation}\label{red_mod_FGH_finite}\begin{split}
&\dot \xi=F\xi+Gu,\\& \psi=H\xi,
\end{split}\end{equation}
with $\xi(t)\in\mathbb{R}^\nu$, $\psi(t)\in\mathbb{R}^p$, $G\in\mathbb{C}^{\nu\times m}$ and  $H\in\mathbb{C}^{p\times\nu}$. The model reduction problem for MIMO systems boils down to finding a $\nu$-th order model described by the equations (\ref{red_mod_FGH_finite}) which satisfies the conditions
\begin{equation}\label{eq_tangent}
K(s_i)l_i=\widehat K(s_i)l_i,\ i=1,...,\nu,
\end{equation}
where $\widehat K(s)=H(sI-F)^{-1}G$ is the transfer function of (\ref{red_mod_FGH_finite}). The relations (\ref{eq_tangent}) are called the right tangential interpolation conditions, see \cite{gallivan-vandendorpe-vandooren-SIAM2004}. It immediately follows that the solution to this problem is provided by a direct application of Theorem \ref{thm_redmod_CPi}, i.e., a class of reduced order MIMO models that achieve moment matching in the sense of satisfying the tangential interpolation conditions (\ref{eq_tangent}) is given by $\Sigma_G=(S-GL,G,B^*Q\Pi)$ as in (\ref{model_gen_G}).

Similarly, we may define the left tangential interpolation problem and its solution. To this end, let ${\mathcal Q}\in\mathbb{C}^{\nu\times\nu}$ and ${\mathcal R}=[r_1^*\ ...\ r_\nu^*]^*\in\mathbb{C}^{\nu\times p}$, $r_i\in\mathbb{C}^{1\times p}$, $i=1,...,\nu$, be such that the pair $({\mathcal Q},{\mathcal R})$ is controllable. Let $\Upsilon\in\mathbb{C}^{\nu\times n}$ be the unique solution of (\ref{eq_Sylvester_Y}). Hence the moments $\eta(s_i)=r_iK(s_i)$, $\eta(s_i)\in\mathbb{C}^{1\times m}$, $i=1,...,\nu$, of system (\ref{pH_system}) at $\{s_1,...,s_\nu\}=\sigma({\mathcal Q})$ are in one-to-one relation with $\Upsilon B$. The model reduction problem boils down to finding a $\nu$-th order model described by the equations (\ref{red_mod_FGH_finite}) which satisfies the conditions
\begin{equation}\label{eq_tangent_dual}
r_iK(s_i)=r_i\widehat K(s_i),\ i=1,...,\nu.
\end{equation}
The relations (\ref{eq_tangent_dual}) are called the left tangential interpolation conditions, see \cite{gallivan-vandendorpe-vandooren-SIAM2004} and the solution to this problem is provided by a direct application of Theorem \ref{thm_redmod_CPi}, i.e., a class of reduced order MIMO models that achieve moment matching in the sense of satisfying the tangential interpolation conditions (\ref{eq_tangent_dual}) is given by $\Sigma_H=({\mathcal Q}-{\mathcal R}H,\Upsilon B,H)$ as in (\ref{model_gen_H}).

Note finally that also in the MIMO case the models are parameterized in $L$ and $\mathcal R$, respectively. Their choice is important in establishing appropriate directions for interpolation. Throughout the rest of the paper we discuss the SISO case, i.e., $m=p=1$, the results being easily extended to tangential interpolation for MIMO systems. However, when necessary, we make specific remarks about the latter case.

\subsection{Krylov projections}\label{sect_krylov}

In this section we give a brief overview on the port Hamiltonian reduced order models obtained using Krylov projections.

\begin{prop}\label{prop_red_PH_V}\cite{lohmann-wolf-eid-kotyczka-TRAC2009,wolf-lohmann-eid-kotyczka-EJC2010}
Consider the system (\ref{pH_system}). Let $s_0\in\mathbb{C}$ be such that $s_0\notin\sigma((J-R)Q)$ and let $V\in\mathbb{R}^{n\times \nu}$ be such that $V\in\ {\rm span} \{((J-R)Q-s_0I)^{-1} B,...,((J-R)Q-s_0I)^{-\nu} B\}$. Consider the port Hamiltonian system
\begin{equation}\label{model_pH_WV} \Sigma_{V}: \left\{\begin{split}
&\dot \xi= (J_r-R_r)Q_r\xi+B_ru,\\& \psi=B_r^*Q_r\xi,
\end{split}\right.\end{equation} with
\begin{equation}\label{model_pH_WV_paramters}\begin{split}
&J_r=V^*QJQV,\ R_r=V^*QRQV, \\& Q_r=(V^*QV)^{-1},\ B_r=V^*QB.
\end{split}\end{equation}
Then $\Sigma_V$ matches the first $\nu$ moments of (\ref{pH_system}) at $s_0$.
\fin\end{prop}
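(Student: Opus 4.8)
The plan is to show that $\Sigma_V$ is a projection of the original system onto the column space of $V$ using the oblique projector naturally associated with the port Hamiltonian structure, and then invoke the standard Krylov moment matching argument. First I would introduce $W := QV(V^*QV)^{-1}$, so that $W^*V = I_\nu$ and $\Pi := VW^*$ is a projector onto $\mathrm{span}(V)$. A direct check then shows that the matrices in \eqref{model_pH_WV_paramters} can be rewritten as $J_r = W^*JW \cdot (\text{something})$; more to the point, one verifies $(J_r-R_r)Q_r = W^*(J-R)QV$, $B_r = W^*B$ and $B_r^*Q_r = B^*QV(V^*QV)^{-1}$, i.e. $\Sigma_V = (W^*(J-R)QV,\ W^*B,\ B^*QW)$ with $W$ as above. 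In other words $\Sigma_V$ is exactly the Petrov–Galerkin reduction of \eqref{pH_system} with right projection matrix $V$ and left projection matrix $W$, and the point of the particular choice \eqref{model_pH_WV_paramters} is precisely that this projection happens to be realizable in port Hamiltonian form (with $J_r$ skew-symmetric and $R_r$, $Q_r$ symmetric, which is immediate from skew-symmetry of $J$ and symmetry of $R$, $Q$, and invertibility of $V^*QV$, the latter following from $Q>0$ or at least $Q$ nonsingular on $\mathrm{span}(V)$).

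Next I would establish the moment matching itself. Since $V$ spans the rational Krylov subspace $\mathcal{K} = \mathrm{span}\{(A-s_0I)^{-1}B,\dots,(A-s_0I)^{-\nu}B\}$ with $A := (J-R)Q$, the key algebraic fact is that $(A-s_0I)^{-j}B \in \mathrm{span}(V)$ for $j=1,\dots,\nu$, hence $\Pi (A-s_0I)^{-j}B = (A-s_0I)^{-j}B$ for those $j$. From the reduced resolvent one expands $K(s) - \widehat K(s)$ around $s_0$: writing $K(s) = B^*Q\sum_{j\ge 0}(s-s_0)^j (-(A-s_0I)^{-1})^{j+1}B$ (using $(sI-A)^{-1} = ((s-s_0)I + (s_0I - A))^{-1}$ and expanding geometrically), and similarly for $\widehat K$ with $A_r := (J_r-R_r)Q_r = W^*AV$, $B_r = W^*B$, $C_r := B^*QW$, one shows term by term, by induction on $j$, that the $j$-th Taylor coefficients agree for $j = 0,\dots,\nu-1$. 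The inductive step uses $W^*V = I$ together with the Krylov invariance $A^{-j}_{s_0}B \in \mathrm{span}(V)$ to "pull" $V W^*$ through each application of the resolvent, exactly as in the classical one-sided Krylov proof (e.g. the argument behind Proposition \ref{prop_matching_happens} in spirit, or standard references such as \cite{grimme-1997,antoulas-2005}).

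The main obstacle is purely bookkeeping: verifying cleanly that the particular formulas \eqref{model_pH_WV_paramters} indeed coincide with the Petrov–Galerkin projection $(W^*AV, W^*B, C_rV)$ with the stated $W$, and that the reduced resolvent $(sI - A_r)^{-1}$ interacts correctly with the Krylov invariance. Concretely the crux is the identity
\begin{equation*}
V(s_0I - A_r)^{-j}W^*B = (s_0I - A)^{-j}B,\qquad j=1,\dots,\nu,
\end{equation*}
which one proves by induction: the base case $j=1$ is $V(s_0I - A_r)^{-1}W^*B = (s_0I-A)^{-1}B$, following from $(s_0I-A)^{-1}B\in\mathrm{span}(V)$ and $W^*(s_0I-A)V = s_0 I - A_r$ together with $W^*V=I$; the step multiplies through by $(s_0I-A)^{-1}$ and again uses Krylov invariance plus $VW^*$ acting as identity on $\mathrm{span}(V)$. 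Once this identity is in hand, equality of the first $\nu$ moments at $s_0$ is immediate, and since all the reduced matrices retain the required symmetry/skew-symmetry, $\Sigma_V$ is a port Hamiltonian system of order $\nu$ matching the first $\nu$ moments, which is the claim. I would not belabor the well-definedness of $Q_r$ beyond noting $V$ has full column rank $\nu$ and $Q$ is nonsingular, so $V^*QV$ is invertible whenever $Q>0$; in the indefinite case one simply assumes this genericity, consistent with the hypotheses used elsewhere in the paper.
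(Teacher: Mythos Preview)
The paper does not give its own proof of this proposition; it is quoted as a known result from \cite{lohmann-wolf-eid-kotyczka-TRAC2009,wolf-lohmann-eid-kotyczka-EJC2010}. Your overall strategy---recognise $\Sigma_V$ as (equivalent to) a one-sided Petrov--Galerkin projection onto the rational Krylov space and then run the standard inductive argument for moment matching---is correct and is essentially what those references do.

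However, the algebraic identifications you write down are not literally true, and this propagates into your ``crux'' identity. With $W:=QV(V^*QV)^{-1}$ one has $W^*=(V^*QV)^{-1}V^*Q$, hence
\[
W^*(J-R)QV=Q_r(J_r-R_r),\qquad W^*B=Q_rB_r,\qquad B^*QW=B^*Q^2V(V^*QV)^{-1},
\]
none of which coincides with the corresponding matrix of $\Sigma_V$, whose dynamics, input and output are $(J_r-R_r)Q_r$, $B_r=V^*QB$ and $B_r^*Q_r=B^*QVQ_r$. What \emph{is} true is that $\Sigma_V$ and the Petrov--Galerkin model $(W^*AV,\,W^*B,\,B^*QV)$ have the same transfer function: a direct computation shows both equal $B^*QV\bigl(sV^*QV-V^*Q(J-R)QV\bigr)^{-1}V^*QB$, or equivalently the coordinate change $\xi\mapsto Q_r^{-1}\xi$ takes one realisation to the other. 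Your inductive identity and its proof sketch are correct for the Petrov--Galerkin triple $(W^*AV,\,W^*B,\,B^*QV)$; as stated for $A_r=(J_r-R_r)Q_r$ they are not (the correct version is $VQ_r(s_0I-A_r)^{-j}B_r=(s_0I-A)^{-j}B$, with an extra factor $Q_r$, which then pairs with $C_r=B^*QVQ_r$ to give the moments). Once you insert the equivalence step explicitly, the argument is complete.
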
 %

Furthermore, in \cite{gugercin-polyuga-beattie-vdschaft-ARXIV2011} a Krylov projection technique is proposed for the computation of a port Hamiltonian reduced order model that achieves moment matching, i.e., tangential interpolation, in the MIMO case.
\begin{thm}\label{thm_red_PH_WV}\cite{gugercin-polyuga-beattie-vdschaft-ARXIV2011}
Consider a MIMO system (\ref{pH_system}) with the input $u(t)\in\mathbb{R}^m$ and the output $y(t)\in\mathbb{R}^m$, i.e. $B\in\mathbb{C}^{n\times m}$. Let $s_1,...,s_\nu\in\mathbb{C}$ be such that $s_1\ne s_2\ne ... \ne s_\nu$. Let $l_1,...,l_\nu\in\mathbb{C}^m$ and
\begin{equation}\label{eq_V}
V=[(s_1 I-(J-Q)R)^{-1}Bl_1\ ...\ (s_\nu I-(J-Q)R)^{-1}Bl_\nu].
\end{equation}
Let $M\in\mathbb{C}^{\nu\times\nu}$ be any nonsingular matrix such that $\widehat V=VM$ is real. Define $\widehat W=Q\widehat V(\widehat V^*Q\widehat V)^{-1}$ and
\begin{equation}\label{model_pH_W_paramters}\begin{split}
&J_r=\widehat W^*J\widehat W,\ R_r=\widehat W^*R\widehat W, \\& Q_r=(\widehat V^*Q\widehat V)^{-1},\ B_r=\widehat W^*B.
\end{split}\end{equation}
Then the reduced order model $\Sigma_{\widehat V}$ as in (\ref{model_pH_WV}) is port Hamiltonian, passive and satisfies the right tangential interpolation conditions (\ref{eq_tangent}).\fin
\end{thm}
Note that Theorem \ref{thm_red_PH_WV} yields a class of reduced order port Hamiltonian models, with state-space realizations parameterized in $M$, which achieve moment matching. Results which extend this technique to matching a number of moments which is twice the number of selected interpolation points can be found in \cite{polyuga-vdschaft-TAC2011}.
\begin{rem}\label{obs_optimal_choice}\n
To the best of our knowledge, the existing moment matching based model order reduction techniques do not yield approximation error bounds. However, a choice of interpolation points that improves the accuracy of the approximation, i.e., stems from solving an $H_2$ optimal approximation problem, is at the mirror images of the poles of the approximant. In detail, if $\widehat K(s)$ is the approximation that yields the best approximation of $K(s)$ associated to (\ref{pH_system}), in the $H_2$ norm, then $\widehat K(s)$ satisfies the conditions
\begin{subequations}\label{eq_H2opt}
\begin{align}
\widehat K(-\widehat\lambda_i) &= K(-\widehat\lambda_i), \label{eq_H2opt1}\\ \nonumber\\
\left.\frac{d\widehat K(s)}{ds}\right|_{s=-\widehat\lambda_i} &= \left.\frac{dK(s)}{ds}\right|_{s=-\widehat\lambda_i}, \label{eq_H2opt2}
\end{align}
\end{subequations}
with  $i=1,...,\nu$, where $\lambda_i$ are the poles of $\widehat K(s)$, see e.g., \cite{gugercin-antoulas-beattie-SIAM2008,meier-luenberger-TAC1967,gugercin-antoulas-CDC2003}. Hence, an accurate approximation of a system (\ref{pH_system}), in the sense that the $H_2$-norm of the approximation error decreases, is obtained by finding a reduced order model that achieves moment matching in the sense of relations (\ref{eq_H2opt}). Since the poles of the approximation are not known in advance, a suitable choice of interpolation points (see \cite{gugercin-antoulas-CDC2003,gugercin-antoulas-LINALG2006}) is
\begin{equation}\label{eq_H2opt_proposed}
s_i=-\lambda_i,\ i=1,...,\nu,
\end{equation}
where $\lambda_i\in\sigma((J-R)Q)$, with highest residues of the transfer function $K(s)$ .\fin
\end{rem}

Proposition \ref{prop_red_PH_V} directly applies to Markov parameter matching, i.e., matching at infinity. However, this is not the case for Theorem \ref{thm_red_PH_WV}. This is due to the fact that the construction and direct application of the projectors  $W$ and $V$ does not yield a reduced order model with port Hamiltonian structure. However, in \cite{polyuga-vdschaft-ECC2009,polyuga-vdschaft-AUT2010} a solution is obtained in a specific set of coordinates, i.e., performing a coordinate transformation the port Hamiltonian system is brought in a form where Proposition \ref{prop_red_PH_V} applies.
A precise definition of Markov parameters and the corresponding detailed arguments are found in Section \ref{sect_MarkovM}.

To the best of the authors' knowledge there is no structured way of determining the number $\nu$ of interpolation points required for approximation. However, a fair choice appears to be the number of large Hankel singular values of the given system, see, e.g., \cite{ionescu-scherpen-iftime-astolfi-MTNS2012}.

\section{Markov parameter matching as moment matching for a class of descriptor systems}\label{sect_MarkovM}

As mentioned in Section \ref{sect_time}, the time-domain moment matching tools do not directly apply to $s=\infty$. In this section we present a time-domain approach to moment matching at $s=\infty$, i.e., Markov parameter matching. The idea is to perform the change of variable $\tau=1/s \Leftrightarrow s=1/\tau$ and turn the problem of Markov parameter matching into a moment matching problem at $\tau=0$. First we discuss the problem of moment matching at any $\tau\in\mathbb{C}$ and provide time-domain interpretations of the notion of moment and moment matching. Based on this notion of moment, we characterize the classes of parameterized reduced order models that match the moments of a given system at $\tau$ and in particular at $\tau=0$.

Consider a linear, minimal system described by the equations
\begin{equation}\label{lin_sys}
\begin{split} & \dot x=Ax+Bu, \\& y=Cx, \end{split}
\end{equation} with $x(t)\in\mathbb{R}^{n},\ y(t)\in\mathbb{R},\ u(t)\in\mathbb{R}$. Let $K(s)=C(sI-A)^{-1}B$ be its transfer function. The first $\nu +1$ Markov parameters are the coefficients of the series expansion of $K(s)$ around $s=\infty$, i.e., they are the first $\nu +1$ moments of $K(s)$ at $\infty$, namely
\begin{equation}\label{Markov_par}
\eta_0(\infty)=0,\ \eta_k(\infty)=CA^{k-1}B,\ k=1,...,\nu.
\end{equation} Let $\tau\in\mathbb{C}$ and note that the matrix pencil $I-A\tau$ is regular, i.e., $\det(I-A\tau)\ne 0$, for some $\tau$ (see e.g., \cite{chu-LAA1987}). According to, e.g., \cite{campbell-1980}, $(I-A\tau)^{-1}$ exists and so the function $\widetilde K(\tau)=K\left(\frac{1}{\tau}\right)=C(I-A\tau)^{-1}B\tau$ is well-defined. Furthermore, we have that $\frac{d^{k+1} \widetilde K(\tau)}{d\tau^{k+1}} =(k+1)!C[(I-A\tau)^{-k-1}A^{k}B+(I-A\tau)^{-k-2}A^{k+1}B\tau]$, yielding
\begin{equation*}\frac{1}{(k+1)!}\frac{d^{k+1} \widetilde K(\tau)}{d\tau^{k+1}} =C(I-A\tau)^{-k-2}A^{k}B.\end{equation*} The moments of $\widetilde K(\tau)$ at $\tau=\tau^*\in\mathbb{C}$ are given by
\begin{equation}\label{eq_mom_tilde} \widetilde\eta_k(\tau^*) =\frac{1}{(k+1)!} \left.\frac{d^{k+1} \widetilde K(\tau)}{d\tau^{k+1}}\right|_{\tau=\tau^*}\end{equation} and the moments $\eta_0(\infty),...,\eta_\nu(\infty)$ are given by
\begin{equation*}
\eta_k(\infty)=\frac{1}{(k+1)!}\left.\frac{d^{k+1} \widetilde K(\tau)}{d\tau^{k+1}}\right|_{\tau=0}=\widetilde\eta_k(0).
\end{equation*} We now consider the following moment matching problem. Given the function $\widetilde K(\tau)$ and the point $\tau^*\in\mathbb{C}$ find $\widehat K(\tau)$ such that the first $\nu+1$ moments at $\tau^*$ match, i.e. $\widetilde\eta_k(\tau)=\frac{d^{k+1} \widetilde K}{d\tau^{k+1}}(\tau^*)=\frac{d^{k+1} \widehat K}{d\tau^{k+1}}(\tau^*)$, for all $k=0,...,\nu$. In particular, we are interested in the case $\tau^*=0$, which solves the Markov parameter matching problem.
\begin{prop}\label{prop_mom_Markov}
Consider the system (\ref{lin_sys}) and $\tau^*\in\mathbb{C}$. Let
\begin{equation*}\begin{split}& L=[1\ 0\ 0\ \dots\ 0]\in\mathbb{R}^{1\times (\nu+1)}, \\& \ S=\left[\begin{array}{ccccc}\tau^* &1 &0 & \dots &0 \\ 0& \tau^* &1 &\dots &0 \\ \vdots & \vdots & \ddots & \ddots & \vdots \\ 0 & \dots &0 &\tau^* &1 \\ 0 &\dots &\dots &0 &\tau^* \end{array}\right]\in\mathbb{C}^{(\nu+1)\times(\nu+1)}.\end{split}\end{equation*} Assume that $\lambda\tau^*\ne 1$, for any $\lambda\in\sigma(A)$.
\begin{enumerate}

\item Let $\Pi\in\mathbb{C}^{n\times (\nu+1)}$ be the unique solution of the Sylvester equation
\begin{equation}\label{eq_matrix_Pi_Markov}
A\Pi S+BL=\Pi.
\end{equation} Then the moments $\widetilde\eta_0(\tau^*),\ \dots,\ \widetilde\eta_\nu(\tau^*)$ of $\widetilde K(\tau)=K(1/\tau)$ satisfy
\begin{equation*}\label{eq_mom_Markov}
[\widetilde\eta_0(\tau^*)\ \widetilde\eta_1(\tau^*)\ \dots\ \widetilde\eta_\nu(\tau^*)]=C\Pi S.
\end{equation*}

\item Let $\bar\Pi\in\mathbb{C}^{n\times (\nu+1)}$ be the unique solution of the Sylvester equation
\begin{equation}\label{eq_matrix_Pi_Markov_bar}
A\bar\Pi S+BLS=\bar\Pi.
\end{equation} Then the moments $\widetilde\eta_0(\tau^*),\ \dots,\ \widetilde\eta_\nu(\tau^*)$ of $\widetilde K(\tau)=K(1/\tau)$ satisfy
\begin{equation*}\label{eq_mom_Markov_bar}
[\widetilde\eta_0(\tau^*)\ \widetilde\eta_1(\tau^*)\ \dots\ \widetilde\eta_\nu(\tau^*)]=C\bar\Pi.
\end{equation*} \vskip -0.3cm \fin

\end{enumerate}
\end{prop}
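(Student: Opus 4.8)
The plan is to solve the two generalized Sylvester equations in closed form by exploiting the bidiagonal structure of $S$ and then to read off $C\Pi S$ and $C\bar\Pi$ column by column, recognizing the columns as the first $\nu+1$ coefficients of the Taylor expansion of $\widetilde K$ about $\tau^*$, that is, the moments $\widetilde\eta_0(\tau^*),\dots,\widetilde\eta_\nu(\tau^*)$ (and, at $\tau^*=0$, the Markov parameters $\eta_0(\infty),\dots,\eta_\nu(\infty)$).

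First I would record well-posedness. Writing $S=\tau^* I+N$, where $N$ is the nilpotent matrix with ones on the first superdiagonal, one has $\sigma(S)=\{\tau^*\}$, so vectorizing $\Pi-A\Pi S=BL$ shows that its coefficient matrix $I-S^{*}\otimes A$ is invertible exactly when $1\notin\tau^*\sigma(A)$, i.e. under the standing hypothesis $\lambda\tau^*\ne1$; the same hypothesis makes $I-\tau^* A$ invertible and, applied to (\ref{eq_matrix_Pi_Markov_bar}), yields uniqueness there as well. Then I would solve (\ref{eq_matrix_Pi_Markov}) columnwise. Setting $\Pi=[\pi_0\ \pi_1\ \dots\ \pi_\nu]$, the $j$-th column of $\Pi S=\tau^*\Pi+\Pi N$ is $\tau^*\pi_0$ for $j=0$ and $\pi_{j-1}+\tau^*\pi_j$ for $j\ge1$, while $BL=[B\ 0\ \dots\ 0]$, so (\ref{eq_matrix_Pi_Markov}) is equivalent to $(I-\tau^* A)\pi_0=B$ and $(I-\tau^* A)\pi_j=A\pi_{j-1}$ for $j\ge1$. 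By induction, and since $A$ commutes with $(I-\tau^* A)^{-1}$, one gets $\pi_j=(I-\tau^* A)^{-(j+1)}A^{j}B$; equivalently $\operatorname{Im}\Pi$ is the rational Krylov subspace $\operatorname{span}\{(I-\tau^* A)^{-1}B,\dots,(I-\tau^* A)^{-(\nu+1)}A^{\nu}B\}$.

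Next I would form $C\Pi S$ and simplify. Its zeroth column is $\tau^* C(I-\tau^* A)^{-1}B=\widetilde K(\tau^*)$, and for $j\ge1$ its $j$-th column is $C(\pi_{j-1}+\tau^*\pi_j)=C(I-\tau^* A)^{-(j+1)}\big[(I-\tau^* A)+\tau^* A\big]A^{j-1}B=C(I-\tau^* A)^{-(j+1)}A^{j-1}B$, the bracket collapsing to the identity. These are precisely the successive Taylor coefficients of $\widetilde K$ about $\tau^*$, i.e. the moments $\widetilde\eta_0(\tau^*),\dots,\widetilde\eta_\nu(\tau^*)$; at $\tau^*=0$ they equal $0,CB,CAB,\dots,CA^{\nu-1}B$, that is $\eta_0(\infty),\dots,\eta_\nu(\infty)$. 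This proves part 1.

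For part 2 I would multiply (\ref{eq_matrix_Pi_Markov}) on the right by $S$ to get $A(\Pi S)S+BLS=\Pi S$, so $\Pi S$ solves (\ref{eq_matrix_Pi_Markov_bar}); by the uniqueness already noted, $\bar\Pi=\Pi S$, hence $C\bar\Pi=C\Pi S=[\widetilde\eta_0(\tau^*)\ \dots\ \widetilde\eta_\nu(\tau^*)]$. (One may also verify this directly, solving (\ref{eq_matrix_Pi_Markov_bar}) columnwise with $BLS=[\tau^* B\ B\ 0\ \dots\ 0]$, which gives $\bar\pi_0=\tau^*(I-\tau^* A)^{-1}B$ and $\bar\pi_j=(I-\tau^* A)^{-(j+1)}A^{j-1}B$ for $j\ge1$, and then concludes as above.) I do not expect a genuine obstacle: once the columnwise recursion is set up, the computation is elementary; the only points requiring care are the regularity argument identifying the explicit construction with the \emph{unique} solution and the index bookkeeping when matching the columns of $C\Pi S$ with the moments — in particular noticing that the zeroth column carries $\widetilde K(\tau^*)$ itself, which at $\tau^*=0$ is $\eta_0(\infty)=0$.
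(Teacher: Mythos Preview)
Your argument for part~(1) is essentially the paper's own proof: both solve \eqref{eq_matrix_Pi_Markov} column by column using the Jordan block structure of $S$, obtain $\pi_j=(I-\tau^*A)^{-(j+1)}A^jB$, and then identify the columns of $C\Pi S$ with the moments. You are in fact a bit more explicit than the paper, which stops at the formula for $\Pi_j$ and says ``from which the claim follows''; your collapse of $(I-\tau^*A)+\tau^*A=I$ when computing $C(\pi_{j-1}+\tau^*\pi_j)$ makes the identification transparent.

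For part~(2) you take a genuinely different and cleaner route. The paper repeats the direct columnwise recursion for $\bar\Pi$, producing the two-term expressions $\bar\Pi_j=(I-A\tau^*)^{-(j+1)}A^jB\tau^*+(I-A\tau^*)^{-j}A^{j-1}B$ and then concludes. You instead right-multiply \eqref{eq_matrix_Pi_Markov} by $S$ to see that $\Pi S$ solves \eqref{eq_matrix_Pi_Markov_bar}, invoke uniqueness to get $\bar\Pi=\Pi S$, and read off $C\bar\Pi=C\Pi S$ directly from part~(1). This is shorter, avoids re-deriving the recursion, and exposes the structural relation $\bar\Pi=\Pi S$ that the paper's computation hides. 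One small caveat: your identification of the zeroth column with $\widetilde K(\tau^*)$ and the phrase ``Taylor coefficients'' does not literally match the paper's convention $\widetilde\eta_k(\tau^*)=\frac{1}{(k+1)!}\widetilde K^{(k+1)}(\tau^*)$, but the paper's own indexing is internally inconsistent on this point (compare $\eta_0(\infty)=0$ with $\widetilde\eta_0(0)=CB$), and its proof is no more precise than yours here.
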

\begin{proof} The assumption that $\lambda\tau^*\ne 1$, for any $\lambda\in\sigma(A)$ and the regularity of the matrix pencil $I-A\tau$ yield that the solutions $\Pi$ and $\bar\Pi$ of the Sylvester equations (\ref{eq_matrix_Pi_Markov}) and (\ref{eq_matrix_Pi_Markov_bar}), respectively, exist and are unique (see e.g., \cite{chu-LAA1987}).
\\
To prove the statement (1), let $\Pi=[\Pi_0\ \Pi_1\ \dots\ \Pi_\nu]$. By (\ref{eq_matrix_Pi_Markov}) we have the sequence of equalities
\begin{align*}
A\Pi_0\tau^*+B=\Pi_0 & \Leftrightarrow \Pi_0=(I-A\tau^*)^{-1}B, 
\\
A\Pi_0+A\Pi_1\tau^*=\Pi_1 &\Leftrightarrow \Pi_1 =(I-A\tau^*)^{-1}A\Pi_0 \\  & \hspace{0.96cm} =(I-A\tau^*)^{-1}A(I-A\tau^*)^{-1}B \\ & \hspace{0.96cm} =(I-A\tau^*)^{-2}AB, 
\\
&\hskip 0.22cm\vdots 
\\
A\Pi_{\nu-1}+A\Pi_\nu \tau^*=\Pi_\nu &\Leftrightarrow \Pi_\nu=(I-A\tau^*)^{-1}\Pi_{\nu-1} \\ & \hspace{0.97cm} =(I-A\tau^*)^{-\nu-1}A^\nu B, 
\end{align*}
from which the claim follows. To prove the second statement, let $\bar\Pi=[\bar\Pi_0\ \bar\Pi_1\ \dots\ \bar\Pi_\nu]$. By (\ref{eq_matrix_Pi_Markov_bar}) we have the sequence of equalities
\begin{align*}
A\bar\Pi_0\tau^*+B\tau^*=\bar\Pi_0 & \Leftrightarrow \bar\Pi_0=(I-A\tau^*)^{-1}B\tau^*, 
\\
A\bar\Pi_0+A\bar\Pi_1\tau^*+B=\bar\Pi_1 &\Leftrightarrow \bar\Pi_1=(I-A\tau^*)^{-1}(A\bar\Pi_0+B) \\& \hspace{0.96cm} =(I-A\tau^*)^{-2}A^2B\tau^* \\ & \hspace{1.3cm} +(I-A\tau^*)^{-1}B, 
\\
&\hskip 0.22cm \vdots 
\\
A\bar\Pi_{\nu-1}+A\bar\Pi_\nu \tau^*=\bar\Pi_\nu &\Leftrightarrow \bar\Pi_\nu=(I-A\tau^*)^{-(\nu+1)}A^\nu B\tau^* \\ & \hspace{1.3cm} +(I-A\tau^*)^{-\nu}A^{\nu-1}B, 
\end{align*}
from which the claim follows. \end{proof}
Proposition \ref{prop_mom_Markov} can be extended to the general case of any (non-derogatory\footnote{A matrix is non-derogatory if its characteristic and minimal polynomials coincide.}\hspace{-0.08cm}) matrix $S$ that satisfy the assumption made on $\sigma(A)$. Hence, throughout the rest of the section we make the following standing assumption.
\begin{assum}\n\label{ass_eigenvalues_Markov}
$\lambda\mu\ne 1$, for any $\lambda\in\sigma(A)$ and $\mu\in\sigma(S)$.
\end{assum}
\begin{exmp}\n\label{example_markov_ABC}

\begin{figure}[h]\centering
\subfigure[]{\psfrag{U}{$u$} \psfrag{S}{\large $\frac{1}{s}$} \psfrag{X2}{$x_2$}
\psfrag{X1}{\hspace{0.4cm}$x_1$} \psfrag{A}{$a$} \psfrag{B}{$b$}
\includegraphics[scale=0.6]{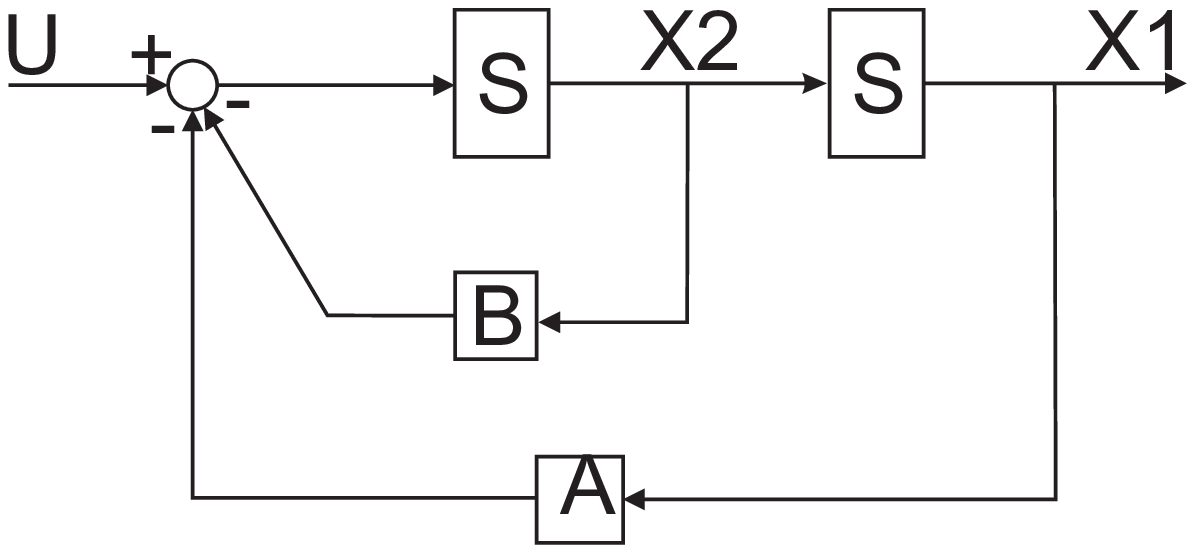} \label{fig_example_markov}}
\hspace{\subfigtopskip}
\hspace{\subfigbottomskip}
\subfigure[]{\psfrag{U}{$u$} \psfrag{S}{\large $\tau$} \psfrag{X2}{$x_2$}
\psfrag{X1}{\hspace{0.4cm}$x_1$} \psfrag{A}{$a$} \psfrag{B}{$b$}
\includegraphics[scale=0.6]{example_markov_ABC.EPS}\label{fig_example_markov_1}}
\caption{Block diagrams of systems (\ref{exmp_markov_ABC}) (a) and (\ref{exmp_markov_ABC_1}) (b).}\end{figure}

\noindent Consider the system depicted in Fig. \ref{fig_example_markov}, described by the equations
\begin{align}\label{exmp_markov_ABC}
 \dot x_1 &= x_2, \nonumber\\ \dot x_2 &= -ax_1-bx_2+u, \\ y &= x_1, \nonumber
\end{align} with the transfer function $K(s)=\frac{1}{s^2+bs+a}$.

Consider now the system depicted in Fig. \ref{fig_example_markov_1}, where we have replaced the integrators with differentiators, i.e., $\tau=\frac{1}{s}$. Note that $x_1(t)=\frac{d}{dt}(x_2(t))$ and $x_2(t)=\frac{d}{dt}(-ax_1(t)-bx_2(t)+u(t))$ yield the {\it descriptor state-space realization}
\begin{align}\label{exmp_markov_ABC_1}
\dot x_2 & =  x_1,\nonumber \\ -a\dot x_1-b\dot x_2 & =  x_2-\dot u, \\ y & =  x_1, \nonumber
\end{align} with the transfer function $\widetilde K(\tau)=K\left(\frac{1}{\tau}\right)=\frac{\tau^2}{a\tau^2+b\tau+1}$. Hence, the Markov parameters of (\ref{exmp_markov_ABC}) are the moments of the system (\ref{exmp_markov_ABC_1}) at $\tau=0$.
\fin\end{exmp}

\begin{figure}[h]\centering
\psfrag{Sys}{\small \hspace{-0.15cm} $\begin{array}{ll} A\dot x=x-B\dot u \\ y=Cx\end{array}$} \psfrag{Sig}{\scriptsize \hspace{-0.1cm} $\begin{array}{ll}\dot \omega=S\omega\\ \theta=L\omega\end{array}$}
\psfrag{XT}{$\theta=u$} \psfrag{Y}{$y$}
\includegraphics[scale=0.8]{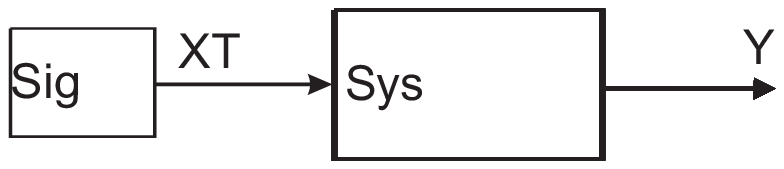}\caption{Interconnection of the signal generator and the system (\ref{sys_descript_2}), with transfer function $\widetilde K(\tau)$.} \label{diagram_omega_block_L}
\end{figure}

The following result provides a time-domain interpretation of the moments of $\widetilde K(\tau)=K(1/\tau)$, where $K$ is the transfer function of the system (\ref{lin_sys}). Namely, the moments of $\widetilde K(\tau)$ are related to the well-defined steady-state response of the output of a descriptor state-space representation associated to $\widetilde K(\tau)$, with the input given by a signal generator defined by the interpolation points given by the set $\sigma(S)$ (see Fig. \ref{diagram_omega_block_L}).

\begin{thm}\label{thm_Markov_time}
Consider the system (\ref{lin_sys}) and let $S$ be such that $\sigma(S)\subset\mathbb{C}^0$. Assume that $\sigma(A)\subset\mathbb{C}^-$. Let
\begin{equation}\label{sig_generator_descript}
\dot \omega = S\omega,\ \omega(0)\ne 0,
\end{equation} with $\omega(t)\in\mathbb{R}^\nu$.
\begin{description}

\item[{\bf i.}] Consider the descriptor state-space representation associated to the transfer function $\widetilde K$ given by
\begin{equation}\label{sys_descript_1}\begin{split}
& A\dot x=x-Bu, \\& y=C\dot x,
\end{split}\end{equation}
with $x(t)\in\mathbb{R}^n$, $u(t)\in\mathbb{R}$ and $y(t)\in\mathbb{R}$. Then the moments $\widetilde\eta_k(\tau_i)$, $k=0,1,2,...$, $i=0,...,\nu$, of the system (\ref{lin_sys}) at $\sigma(S)$, are in one-to-one relation with the well-defined steady-state response of the output of the interconnection of systems (\ref{sys_descript_1}) and (\ref{sig_generator_descript}), with $u=L\omega$, where $L$ is such that the pair $(L,S)$ is observable.
\vspace{0.1cm}
\item[{\bf ii.}] Consider the descriptor state-space representation associated to the transfer function $\widetilde K$ given by
\begin{equation}\label{sys_descript_2}\begin{split}
& A\dot x=x-B\dot u, \\& y=Cx,
\end{split}\end{equation}
with $x(t)\in\mathbb{R}^n$, $u(t)\in\mathbb{R}$ and $y(t)\in\mathbb{R}$. Then the moments $\widetilde\eta_k(\tau_i)$, $k=0,1,2,...$, $i=0,...,\nu$, of the system (\ref{lin_sys}) at $\sigma(S)$, are in one-to-one relation with the well-defined steady-state response of the output of the interconnection of systems (\ref{sys_descript_2}) and (\ref{sig_generator_descript}), with $u=L\omega$, where $L$ is such that the pair $(L,S)$ is observable.\fin
\end{description}
\end{thm}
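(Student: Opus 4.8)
The plan is to follow the template of Theorem~\ref{thm_mom_steady}: realise each interconnection as an asymptotically stable state-space system driven by the neutrally stable signal generator (\ref{sig_generator_descript}), identify the well-defined steady state with the invariant subspace described by the Sylvester equations of Proposition~\ref{prop_mom_Markov}, and then read off the steady-state output. The one genuinely delicate point is the legitimacy of the descriptor interpretation, and it is disposed of at the outset: since $\sigma(A)\subset\mathbb{C}^-$ we have $0\notin\sigma(A)$, so $A$ is nonsingular and the pencils $I-A\tau$ are nonsingular for every $\tau\in\sigma(S)$ (as $\sigma(S)\subset\mathbb{C}^0$ and $1/\tau$ is then purely imaginary, hence not an eigenvalue of $A$); consequently (\ref{sys_descript_1}) and (\ref{sys_descript_2}) have no impulsive modes and are in fact ordinary state-space systems, and the classical transient/steady-state decomposition of the footnote applies verbatim. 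Moreover $\sigma(A^{-1})=\{1/\lambda:\lambda\in\sigma(A)\}\subset\mathbb{C}^-$, so $\sigma(A^{-1})\cap\sigma(S)=\emptyset$, and Assumption~\ref{ass_eigenvalues_Markov} holds automatically, so the Sylvester equations (\ref{eq_matrix_Pi_Markov}) and (\ref{eq_matrix_Pi_Markov_bar}) have unique solutions $\Pi$ and $\bar\Pi$.

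For statement~\textbf{ii}, substituting $u=L\omega$ (hence $\dot u=LS\omega$) into (\ref{sys_descript_2}) and multiplying on the left by $A^{-1}$ gives $\dot x=A^{-1}x-A^{-1}BLS\omega$. Setting $\tilde x=x-\bar\Pi\omega$ and using (\ref{eq_matrix_Pi_Markov_bar}) in the equivalent form $\bar\Pi S=A^{-1}\bar\Pi-A^{-1}BLS$, a one-line computation gives $\dot{\tilde x}=A^{-1}\tilde x$; since $\sigma(A^{-1})\subset\mathbb{C}^-$, $\tilde x(t)\to 0$ exponentially. Thus the solution of the interconnection splits as $x(t)=\bar\Pi\omega(t)+\tilde x(t)$ with $\tilde x$ transient, so the well-defined steady-state response of the state is $x_{ss}(t)=\bar\Pi\omega(t)$ and that of the output is $y_{ss}(t)=Cx_{ss}(t)=C\bar\Pi\omega(t)$. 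By Proposition~\ref{prop_mom_Markov}(2), in its extension to arbitrary non-derogatory $S$, the row vector $C\bar\Pi$ is in one-to-one relation with the moments $\widetilde\eta_k(\tau_i)$ of (\ref{lin_sys}) at $\sigma(S)$; since $\omega(t)=e^{St}\omega(0)$ is fixed, $C\bar\Pi$ and $y_{ss}(t)$ determine one another, which yields the claim.

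For statement~\textbf{i} the argument is identical up to two changes. The interconnection of (\ref{sys_descript_1}) with $u=L\omega$ reads $A\dot x=x-BL\omega$, i.e. $\dot x=A^{-1}x-A^{-1}BL\omega$; with $\tilde x=x-\Pi\omega$ and (\ref{eq_matrix_Pi_Markov}) written as $\Pi S=A^{-1}\Pi-A^{-1}BL$ one again obtains $\dot{\tilde x}=A^{-1}\tilde x$, hence $x_{ss}(t)=\Pi\omega(t)$. Because here the output map is $y=C\dot x$ and $\dot x(t)=\Pi S\omega(t)+A^{-1}\tilde x(t)$ with $A^{-1}\tilde x(t)\to 0$ exponentially, the steady-state output is $y_{ss}(t)=C\dot x_{ss}(t)=C\Pi S\omega(t)$. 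By Proposition~\ref{prop_mom_Markov}(1) (again in its non-derogatory extension) the row vector $C\Pi S$ is in one-to-one relation with the moments $\widetilde\eta_k(\tau_i)$ at $\sigma(S)$, which are therefore in one-to-one relation with $y_{ss}(t)$, and the proof is complete.
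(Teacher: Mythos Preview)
Your proof is correct and follows essentially the same route as the paper's. The only cosmetic difference is that you invert $A$ at the outset and work in standard state-space form, whereas the paper manipulates the descriptor equation directly to obtain $x-\Pi\omega=A(\dot x-\Pi\dot\omega)$ before concluding that the error decays; both arguments reduce to $\dot e=A^{-1}e$ with $\sigma(A^{-1})\subset\mathbb{C}^-$. For part~\textbf{ii} the paper simply appeals to Theorem~\ref{thm_mom_steady} with $u$ replaced by $\dot u$, while you spell out the computation via $\bar\Pi$; your version is more self-contained and your explicit verification of Assumption~\ref{ass_eigenvalues_Markov} and of the absence of impulsive modes is a welcome addition.
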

\begin{proof}
By the interconnection between (\ref{sys_descript_1}) and (\ref{sig_generator_descript}), the first equation in (\ref{sys_descript_1}) yields $x=A\dot x+BL\omega=A\dot x+(\Pi-A\Pi S)\omega$, where $\Pi$ is the unique solution of (\ref{eq_matrix_Pi_Markov}). This further yields $x-\Pi\omega=A(\dot x-\Pi\dot\omega)$. The output $y(t)$ satisfies $y(t)=C(\dot x-\Pi\dot \omega)+C\Pi\dot\omega$. The assumptions $\sigma(A)\subset\mathbb{C}^{-}$ and $\sigma(S)\subset\mathbb{C}^0$ yield $\displaystyle\lim_{t\to\infty}[x(t)-\Pi\omega(t)]=0$, implying that the first term of $y(t)$ vanishes. Hence the steady-state response exists and is well-defined and equal to $C\Pi\dot\omega=C\Pi S\omega$, which proves the claim {\bf i.} Since $\sigma(A)\subset\mathbb{C}^{-}$, the statement {\bf ii.} is a direct application of Theorem \ref{thm_mom_steady}, with the signal $u(t)$ replaced by $\dot u(t)$.
\end{proof}

\begin{cor}\label{cor_Markov_time}
Consider the system (\ref{lin_sys}) satisfying the assumptions in Theorem \ref{thm_Markov_time}. Then the first $\nu$ Markov parameters of (\ref{lin_sys}) are in a one-to-one relationship with the well-defined steady-state response of the system (\ref{sys_descript_1}) (or (\ref{sys_descript_2})) to the input given by a signal generator defined by the interpolation points $\tau^*=0$ with multiplicity $\nu$.
\fin\end{cor}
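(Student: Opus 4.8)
The plan is to read off the corollary as a specialization of Theorem~\ref{thm_Markov_time}, combined with the elementary identity $\eta_k(\infty)=\widetilde\eta_k(0)$ recorded before Proposition~\ref{prop_mom_Markov}. First I would fix the signal generator so that it encodes the interpolation point $\tau^*=0$ with multiplicity $\nu$: take $S$ to be the nilpotent Jordan block with $\sigma(S)=\{0\}$, of the size dictated by the required number of Markov parameters, and $L=[1\ 0\ \dots\ 0]$, exactly as in Proposition~\ref{prop_mom_Markov}. With this choice the pair $(L,S)$ is observable; since every $\mu\in\sigma(S)$ equals $0$, Assumption~\ref{ass_eigenvalues_Markov} (equivalently $\lambda\tau^*\ne 1$ for all $\lambda\in\sigma(A)$) holds automatically; and $\sigma(S)=\{0\}\subset\mathbb{C}^0$. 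Together with the hypothesis $\sigma(A)\subset\mathbb{C}^-$ carried over from the corollary's statement, all the standing assumptions of Theorem~\ref{thm_Markov_time} are in force.

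Next I would apply Theorem~\ref{thm_Markov_time} verbatim for this $(L,S)$: part~\textbf{i} yields that the moments $\widetilde\eta_k(0)$ of $\widetilde K(\tau)=K(1/\tau)$ are in one-to-one relation with the well-defined steady-state response of the interconnection of the descriptor realization (\ref{sys_descript_1}) with the signal generator $\dot\omega=S\omega$, $u=L\omega$; part~\textbf{ii} gives the identical conclusion for the realization (\ref{sys_descript_2}), which accounts for the ``(or~(\ref{sys_descript_2}))'' in the statement. It then remains only to translate ``moments of $\widetilde K$ at $0$'' into ``Markov parameters of (\ref{lin_sys})''. This is precisely the computation preceding Proposition~\ref{prop_mom_Markov}, where $\eta_k(\infty)=\frac{1}{(k+1)!}\left.\frac{d^{k+1}\widetilde K}{d\tau^{k+1}}\right|_{\tau=0}=\widetilde\eta_k(0)$ and these coincide with the Markov parameters $CA^{k-1}B$ from (\ref{Markov_par}); equivalently, Proposition~\ref{prop_mom_Markov}(1) with $\tau^*=0$ gives $[\widetilde\eta_0(0)\ \dots\ \widetilde\eta_\nu(0)]=C\Pi S$, whose entries are $0, CB, CAB,\dots$, i.e.\ $\eta_1(\infty),\dots,\eta_\nu(\infty)$. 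Stacking these identifications, the first $\nu$ Markov parameters of (\ref{lin_sys}) are exactly the moments recovered by the steady-state response in Theorem~\ref{thm_Markov_time}, which is the claim.

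Since the argument is a specialization of results already proved, I do not expect a genuine obstacle; the only point requiring care is bookkeeping on the size/index mismatch, namely reconciling ``$\tau^*=0$ with multiplicity $\nu$'' with the number of \emph{nontrivial} Markov parameters recovered (the leading moment $\widetilde\eta_0(0)=\eta_0(\infty)=0$ is trivial, so one must be explicit about whether $S$ has size $\nu$ or $\nu+1$). A secondary subtlety is that here $S$ is \emph{singular}, so one should confirm that it is $C\Pi S$, and not $C\Pi$, that carries the desired moments --- which it does, multiplication by $S$ acting merely as the shift that realigns the columns $C\Pi_0,\dots,C\Pi_{\nu-1}$ with $\widetilde\eta_1,\dots,\widetilde\eta_\nu$ --- so that observability of $(L,S)$ and uniqueness of $\Pi$ still guarantee the correspondence is one-to-one.
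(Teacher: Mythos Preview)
Your proposal is correct and is exactly the intended argument: the paper gives no separate proof for this corollary, treating it as an immediate specialization of Theorem~\ref{thm_Markov_time} to $\tau^*=0$ combined with the identification $\eta_k(\infty)=\widetilde\eta_k(0)$ established just before Proposition~\ref{prop_mom_Markov}. Your extra care about the size of $S$ and the shift effected by right-multiplication by $S$ is accurate bookkeeping that the paper leaves implicit.
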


We are now ready to define the notion of reduced order models that match the moments of $\widetilde K(\tau)=K(1/\tau)$.
\\
Consider an observable pair $(L,S)$ and the system
\begin{equation}\label{red_mod_FGH} \Sigma_{\mathbf{\Pi}}: \left\{
\begin{split} & \dot\xi = F\xi+Gu, \\& \psi=H\xi,\end{split} \right.
\end{equation} with $\xi(t)\in\mathbb{R}^\nu$ and $\mathbf{\Pi}=\Pi$, or $\mathbf{\Pi}=\bar\Pi$. Let $K_{\mathbf{\Pi}}(s)$ be the transfer function of system (\ref{red_mod_FGH}) and let $\widetilde K_{\mathbf{\Pi}}(\tau)=K_{\mathbf{\Pi}}(1/\tau)$. Then, if $\mathbf{\Pi}=\Pi$ and $\lambda\mu\ne 1$, for any $\lambda\in\sigma(F)$ and $\mu\in\sigma(S)$, (\ref{red_mod_FGH}) is a reduced order model that matches the first $\nu$ moments of $\widetilde K(\tau)$ at $\sigma(S)$ if there exists an invertible matrix $P\in\mathbb{R}^{\nu\times\nu}$ such that
\begin{equation}\label{cond_FGH}
C\Pi S= HPS,\ FPS+GL=P.
\end{equation} If $\tau^*=0$, then the Markov parameters of $\widetilde K_{\Pi}(\tau)$ match the first $\nu$ Markov parameters of $K(s)$.

Finally, for $\mathbf{\Pi}=\bar \Pi$, the system (\ref{red_mod_FGH}) is a reduced order model that matches the first $\nu$ moments of $\widetilde K(\tau)$ at $\sigma(S)$ if there exists an invertible matrix $\bar P\in\mathbb{R}^{\nu\times\nu}$ such that
\begin{equation}\label{cond_FGH_1}
C\Pi= H\bar P,\ F\bar PS+GLS=\bar P.
\end{equation}
\begin{rem}\n
Assume $S$ is invertible, i.e. $\tau^*\ne 0$. Let $P=I$. A reduced order model that matches the moments of $\widetilde K(\tau)$ at $\tau^*$ is given by equations (\ref{red_mod_FGH}) with $F=(I-GL)S^{-1}$ and $H=C\Pi$. Furthermore, if $\bar P=I$, according to (\ref{cond_FGH_1}) another reduced order model that matches the moments of $\widetilde K(\tau)$ at $\tau^*$ is given by equations (\ref{red_mod_FGH}) with $F=S^{-1}-GL$ and $H=C\bar \Pi$.
\fin\end{rem}
\begin{exmp}\label{example_markov_match}\n
Let
\begin{equation*}
L=[1\ 0\ 0],\ S=\left[\begin{array}{ccc} 0 &1 &0 \\ 0 &0 &1 \\ 0 &0 &0\end{array}\right],\ C\Pi=[\eta_1\ \eta_2\ \eta_3].
\end{equation*} The first three Markov parameters of $K(s)$ are $C\Pi S=[0\ \eta_1\ \eta_2]=[0\ CB\ CAB]$. A reduced order model that matches these Markov parameters is given by equations (\ref{cond_FGH}), with
$$F=\left[\begin{array}{ccc} 0 & 0 & 1 \\ f_{21} & 0 & 0 \\ f_{31} & 1 & f_{33} \end{array}\right],\ G=[1\ 0\ 0]^*,\ H=[\eta_1\ \eta_2\ \eta_3].$$
\noindent In general, let $0_{1\times \nu}= [0\ \dots\ 0]\in\mathbb{R}^{1\times \nu}$, $S_1=[1\ \dots\ 0]\in\mathbb{R}^{1\times \nu}$ and
$$S_2=\left[\begin{array}{ccccc}\tau^* &1 &0 & \dots &0 \\ 0& \tau^* &1 &\dots &0 \\ \vdots & \vdots & \ddots & \ddots & \vdots \\ 0 & \dots &0 &\tau^* &1 \\ 0 &\dots &\dots &0 &\tau^* \end{array}\right]\in\mathbb{R}^{\nu\times\nu}.$$
\noindent Then $L=[1\ 0_{1\times \nu}]$ and $S=\left[\begin{array}{cc} 0 & S_1 \\ 0_{\nu} & S_2\end{array}\right]$ are such that the pair $(L,S)$ is observable. Furthermore, let $C\Pi=[\eta_1\ \eta_2\ \dots \ \eta_\nu\ \eta_{\nu+1}]\in\mathbb{R}^{1\times(\nu+1)}$.
The first $\nu+1$ Markov parameters are 

$$C\Pi S=[0\ \eta_1\ \dots\ \eta_{\nu}]=[0\quad CB\quad CAB\quad \dots\quad CA^{\nu-1}B].$$ 

Let $F=\left[\begin{array}{cc} F^*_{11} & F_{12} \\ F_{21} & F_{22} \end{array}\right]$, $G=[G_1\ G_2^*]^*$ and $P=I$. Solving (\ref{cond_FGH}) yields a family of reduced order models of dimension $\nu+1$ that achieve Markov parameters matching characterized by $G_1=1,\ G_2=0$, $F_{11}=0$, $F_{12}=[0\ \dots\ 1]$, $F_{12}$, $H=[\eta_1\ \eta_2\ \dots \ \eta_\nu\ \eta_{\nu+1}]$ and $F_{22}$ free parameters.\\
Finally, let $\bar\Pi=[\bar\Pi_0\ \widetilde\Pi]$, where $\widetilde\Pi=[\bar\Pi_1\ \dots\ \bar\Pi_\nu]\in\mathbb{R}^{n\times \nu}$. In this case, (\ref{eq_mom_Markov_bar}) becomes
$$[0_{\nu}\ A\widetilde\Pi]\left[\begin{array}{cc} 0 & S_1 \\ 0_{\nu} & S_2\end{array}\right]+[B\ 0_{n\times\nu}]\left[\begin{array}{cc} 0 & S_1 \\ 0_{\nu} & S_2\end{array}\right]=[\bar\Pi_0\ \widetilde\Pi],$$ yielding the equations
\begin{equation}\label{eq_mom_Markov_tilde}
\bar \Pi_0=0,\ A\widetilde\Pi S_2+BS_1=\widetilde\Pi.
\end{equation} Note that the first Markov parameter is $\eta_0(\infty)=C\bar\Pi_0=0$ and the remaining $\nu$ Markov parameters are in a one-to-one relation with $C\widetilde\Pi$. A reduced order model of dimension $\nu$ that matches the first $\nu+1$ Markov parameters, i.e. $\eta_0(\infty)=0$ and $\eta_1(\infty),\ \dots,\ \eta_{\nu}(\infty)$, is given by system (\ref{red_mod_FGH}) if there exists an invertible matrix $\widetilde P\in\mathbb{R}^{\nu\times\nu}$ such that
\begin{equation}\label{cond_FGH_tilde}
C\widetilde\Pi=H\widetilde P,\ F\widetilde PS_2+GS_1=\widetilde P.
\end{equation}
\vskip -0.3cm\fin\end{exmp}

\subsection{Alternative approach}\label{sect_Markov_dual}

In this subsection we give an alternative interpretation of the notion of moment matching for $\widetilde K$, in the sense of the approach taken in \cite{astolfi-CDC2010}. The moments of $\widetilde K(\tau)$ are in one-to-one relation with the well-defined steady-state response of the asymptotically stable descriptor systems (\ref{sys_descript_1}) or (\ref{sys_descript_2}) driving a generalized signal generator. This approach yields new families of reduced order models that achieve moment matching at $\tau$ and in particular, Markov parameter matching when $\tau=0$.

\begin{prop}\label{prop_mom_Markov_Y}
Consider the system (\ref{lin_sys}) and $\tau^*\in\mathbb{C}$. Let
\begin{equation*}\label{Q} {\mathcal Q}=\left[\begin{array}{ccccc}\tau^* &0 &0 & \dots &0 \\ 1& \tau^* &0 &\dots &0 \\ \vdots & \vdots & \ddots & \ddots & \vdots \\ 0 & \dots &1 &\tau^* &0 \\ 0 &\dots &\dots &1 &\tau^* \end{array}\right]\in\mathbb{C}^{(\nu+1)\times(\nu+1)}
\end{equation*}
be such that Assumption \ref{ass_eigenvalues_Markov} holds and
\begin{equation*}\label{R}
{\mathcal R}=[1\ 0\ 0\ \dots\ 0]^*\in\mathbb{R}^{\nu+1}.
\end{equation*}
\begin{enumerate}

\item Let $\Upsilon\in\mathbb{C}^{(\nu+1)\times n}$ be the unique solution of the Sylvester equation
\begin{equation}\label{eq_matrix_Y_Markov}
\Upsilon={\mathcal Q}\Upsilon A+{\mathcal R}C.
\end{equation} Then the moments of $\widetilde K(\tau)=K(1/\tau)$ at $\tau^*$ satisfy
\begin{equation*}\label{eq_mom_Y_Markov}
[\widetilde\eta_0(\tau^*)\ \widetilde\eta_1(\tau^*)\ \dots\ \widetilde\eta_\nu(\tau^*)]^*={\mathcal Q}\Upsilon B.
\end{equation*}

\item Let $\widehat\Upsilon\in\mathbb{C}^{(\nu+1)\times n}$ be the unique solution of the Sylvester equation
\begin{equation}\label{eq_matrix_Y_Markov_hat}
\widehat\Upsilon={\mathcal Q}\widehat\Upsilon A+{\mathcal R}CA.
\end{equation} Then the moments of $\widetilde K(\tau)=K(1/\tau)$ at $\tau^*$ satisfy
\begin{equation*}\label{eq_mom_Y_Markov_hat}
[\widetilde\eta_0(\tau^*)\ \widetilde\eta_1(\tau^*)\ \dots\ \widetilde\eta_\nu(\tau^*)]^*={\mathcal Q}({\mathcal Q}\widehat\Upsilon+{\mathcal R}C)B.
\end{equation*} \fin

\end{enumerate}
\end{prop}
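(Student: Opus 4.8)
The plan is to run, in the dual coordinates of (\ref{eq_matrix_Y_Markov})--(\ref{eq_matrix_Y_Markov_hat}), the same row-by-row unravelling of the Sylvester recursion that underlies the proof of Proposition \ref{prop_mom_Markov}. First I would record existence and uniqueness: since $\mathcal Q$ is lower triangular with $\tau^*$ on the diagonal, $\sigma(\mathcal Q)=\{\tau^*\}$, so Assumption \ref{ass_eigenvalues_Markov} (equivalently, $\lambda\tau^*\neq 1$ for every $\lambda\in\sigma(A)$) together with the regularity of the pencil $I-A\tau$ ensures, exactly as for (\ref{eq_matrix_Pi_Markov})--(\ref{eq_matrix_Pi_Markov_bar}), that (\ref{eq_matrix_Y_Markov}) and (\ref{eq_matrix_Y_Markov_hat}) each have a unique solution.

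For statement (1), I would partition $\Upsilon$ into its rows, $\Upsilon=[\Upsilon_0^*\ \Upsilon_1^*\ \dots\ \Upsilon_\nu^*]^*$ with $\Upsilon_i\in\mathbb{C}^{1\times n}$, and read (\ref{eq_matrix_Y_Markov}) one row at a time. Since the $i$-th row of $\mathcal Q\Upsilon$ equals $\tau^*\Upsilon_i+\Upsilon_{i-1}$ (with the convention $\Upsilon_{-1}=0$) and $\mathcal R C$ has first row $C$ and all other rows zero, this gives $\Upsilon_0(I-A\tau^*)=C$ and $\Upsilon_i(I-A\tau^*)=\Upsilon_{i-1}A$ for $i\geq 1$, hence $\Upsilon_0=C(I-A\tau^*)^{-1}$ and, inductively and using that $A$ commutes with $(I-A\tau^*)^{-1}$, $\Upsilon_i=CA^i(I-A\tau^*)^{-(i+1)}$. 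The $i$-th entry of $\mathcal Q\Upsilon B$ is then $\tau^*\Upsilon_iB+\Upsilon_{i-1}B$; factoring out $CA^{i-1}(I-A\tau^*)^{-(i+1)}$ and using the elementary identity $A\tau^*+(I-A\tau^*)=I$ --- the same kind of telescoping implicit in the proof of Proposition \ref{prop_mom_Markov} --- collapses this entry to the single term $CA^{k}(I-A\tau^*)^{-(k+2)}B$, which is the moment $\widetilde\eta_k(\tau^*)$ of (\ref{eq_mom_tilde}). Hence $[\widetilde\eta_0(\tau^*)\ \dots\ \widetilde\eta_\nu(\tau^*)]^*=\mathcal Q\Upsilon B$, as claimed.

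For statement (2) I would avoid repeating the calculation. Right-multiplying (\ref{eq_matrix_Y_Markov}) by $A$ shows that $\Upsilon A$ satisfies $\Upsilon A=\mathcal Q(\Upsilon A)A+\mathcal R CA$, i.e. the defining equation (\ref{eq_matrix_Y_Markov_hat}) of $\widehat\Upsilon$; by uniqueness, $\widehat\Upsilon=\Upsilon A$. Therefore $\mathcal Q\widehat\Upsilon+\mathcal R C=\mathcal Q\Upsilon A+\mathcal R C=\Upsilon$ by (\ref{eq_matrix_Y_Markov}), and so $\mathcal Q(\mathcal Q\widehat\Upsilon+\mathcal R C)B=\mathcal Q\Upsilon B=[\widetilde\eta_0(\tau^*)\ \dots\ \widetilde\eta_\nu(\tau^*)]^*$ by part (1).

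None of these steps is genuinely hard; the main thing to watch --- the analogue of the delicate point in Proposition \ref{prop_mom_Markov} --- is the index bookkeeping, namely checking that the telescoping matches the entries of $\mathcal Q\Upsilon B$ with $\widetilde\eta_0(\tau^*),\dots,\widetilde\eta_\nu(\tau^*)$ in the right order rather than a shifted copy, and that the $0$-th entry, which carries the lower power $(I-A\tau^*)^{-1}$, is reconciled with the normalisation of the moments of $\widetilde K$ fixed in (\ref{eq_mom_tilde}).
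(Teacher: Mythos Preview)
Your argument for statement (1) is exactly the paper's: partition $\Upsilon$ into rows, read off the recursion $\Upsilon_0=C(I-A\tau^*)^{-1}$, $\Upsilon_i=\Upsilon_{i-1}A(I-A\tau^*)^{-1}=CA^i(I-A\tau^*)^{-(i+1)}$, and conclude. The paper stops at ``from which the claim follows''; you go one honest step further by actually forming the entries of $\mathcal Q\Upsilon B$ and telescoping, and you rightly flag the index bookkeeping at the $0$-th entry as the one place that needs care.

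For statement (2) you take a genuinely different and shorter route. The paper repeats the row-by-row calculation for $\widehat\Upsilon$ from scratch. You instead right-multiply (\ref{eq_matrix_Y_Markov}) by $A$, recognise that $\Upsilon A$ satisfies (\ref{eq_matrix_Y_Markov_hat}), invoke uniqueness to get $\widehat\Upsilon=\Upsilon A$, and then observe $\mathcal Q\widehat\Upsilon+\mathcal R C=\mathcal Q\Upsilon A+\mathcal R C=\Upsilon$, reducing (2) to (1) in one line. This is correct and buys you economy: no second recursion, no second telescoping, and the identity $\widehat\Upsilon=\Upsilon A$ is itself a useful structural fact that the paper's direct computation leaves hidden. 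The paper's approach, on the other hand, gives the explicit row formulas for $\widehat\Upsilon$ without reference to $\Upsilon$, which can be convenient downstream.
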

\begin{proof} The assumption that $\lambda\tau^*\ne 1$, for any $\lambda\in\sigma(A)$ and the regularity of the matrix pencil $I-A\tau$ yield that the solutions $\Pi$ and $\bar\Pi$ of the generalized Sylvester equations (\ref{eq_matrix_Y_Markov}) and (\ref{eq_matrix_Y_Markov_hat}), respectively, exist and are unique (see e.g., \cite{chu-LAA1987}).
\\
To prove the statement (1) let $\Upsilon=[\Upsilon_0^*\ \Upsilon_1^*\ \dots\ \Upsilon_\nu^*]^*$. From (\ref{eq_matrix_Y_Markov}), we have the sequence of equalities 
\begin{align*}
\tau^*\Upsilon_0 A+C=\Upsilon_0 & \Leftrightarrow \Upsilon_0=C(I-A\tau^*)^{-1}, 
\\
\Upsilon_0 A+\tau^*\Upsilon_1 A=\Upsilon_1 &\Leftrightarrow \Upsilon_1=\Upsilon_0 A(I-A\tau^*)^{-1} \\ & \hspace{0.96cm} =C(I-A\tau^*)^{-1}A(I-A\tau^*)^{-1} \\& \hspace{0.96cm}  =CA(I-A\tau^*)^{-2}, 
\\
& \hskip 0.22cm\vdots 
\\
\Pi_{\nu-1} A+ \tau^*\Upsilon_\nu A=\Upsilon_\nu &\Leftrightarrow \Upsilon_\nu =\Upsilon_{\nu-1}(I-A\tau^*)^{-1} \\& \hspace{0.96cm} =CA^\nu(I-A\tau^*)^{-\nu-1}, 
\end{align*}   from which the claim follows.

\noindent To prove the second statement let $\widehat\Upsilon=[\widehat\Upsilon_0^*\ \widehat\Upsilon_1^*\ \dots\ \widehat\Upsilon_\nu^*]^*$. From (\ref{eq_matrix_Y_Markov_hat}) we have the sequence of equalities 
\begin{align*}
\tau^*\widehat\Upsilon_0 A+CA=\widehat\Upsilon_0 &\Leftrightarrow \widehat\Upsilon_0=CA(I-A\tau^*)^{-1}, 
\\
\widehat\Upsilon_0 A+\tau^*\widehat\Upsilon_1 A=\widehat\Upsilon_1 &\Leftrightarrow \bar\Upsilon_1=\widehat\Upsilon_0 A(I-A\tau^*)^{-1} \\& \hspace{0.96cm} =\tau^* C A^2(I-A\tau^*)^{-2}, 
\\
& \hskip 0.22cm\vdots 
\\
\bar\Upsilon_{\nu-1} A+\tau^* \bar\Upsilon_\nu A=\bar\Upsilon_\nu &\Leftrightarrow \bar\Upsilon_\nu=\tau^*CA^\nu(I-A\tau^*)^{-\nu}, 
\end{align*}   from which the claim follows.
\end{proof}
\begin{rem}\n
Let $\bar{\mathcal Q}$ and $\bar{\mathcal R}$ be any two matrices such that the pair $(\bar{\mathcal Q},\bar{\mathcal R})$ is controllable. Then the moments described by (\ref{eq_mom_Y_Markov_hat}) are parameterized by the elements of $\bar{\mathcal{R}}$. However, there exists a coordinate transformation $T$ such that $T^{-1}\bar{\mathcal Q}T={\mathcal Q}$, as in (\ref{Q}) and $T^{-1}\bar{\mathcal R}={\mathcal R}$, as in (\ref{R}), yielding the moments of $\widetilde K(\tau)$ at $\sigma(\bar{\mathcal Q})=\sigma({\mathcal Q})$.
\fin\end{rem}
\begin{figure}[h]\centering
\psfrag{Sys}{\small\hspace{0cm} $\begin{array}{ll} A\dot x=x-B u \\ y=C\dot x\end{array}$} \psfrag{Sig}{\hspace{-0.25cm} $\dot \omega={\mathcal Q}\omega+{\mathcal R}v$}
\psfrag{XT}{\hspace{-0.9cm} $d=\omega-\Upsilon x$} \psfrag{Y}{$v=y$} \psfrag{W_T}{$u$}
\includegraphics[scale=0.7]{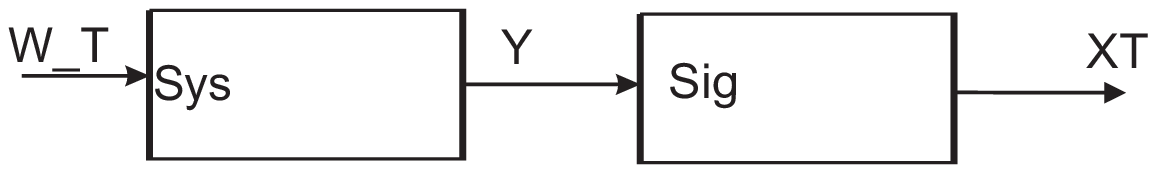}\caption{Interconnection of the signal generator and the system (\ref{sys_descript_2}), with the transfer function $\widetilde K(\tau)$.} \label{diagram_omega_block}
\end{figure}

\noindent Consider the generalized signal generator
\begin{equation}\label{gen_signal_gen}
\dot \omega = {\mathcal Q}\omega + {\mathcal R}v,\ \omega(0)=0,
\end{equation} with $\omega(t)\in\mathbb{R}^\nu$, $v(t)\in\mathbb{R}$.
\noindent Consider the interconnection $v=y$ (as in Fig. \ref{diagram_omega_block}) between the signal generator and the system (\ref{sys_descript_2}). Let $d=\omega-\Upsilon x$. The signal $d(t)$ satisfies $\dot d=\dot\omega-\Upsilon\dot x \Rightarrow \dot d={\mathcal Q}\omega+({\mathcal R}C-\Upsilon)\dot x$. This further yields $\dot d={\mathcal Q}\omega - {\mathcal Q}\Upsilon A\dot x = {\mathcal Q}\omega - {\mathcal Q}\Upsilon (x-Bu)$. In conclusion $d$ satisfies the equation
\begin{equation}\label{eq_signal_d}
\dot d = {\mathcal Q}d+{\mathcal Q}\Upsilon B u.
\end{equation}
\noindent Conversely, let $\Upsilon$ be such that $d=\omega-\Upsilon x$ satisfies equation (\ref{eq_signal_d}). Then $\Upsilon$ satisfies ${\mathcal Q}d+{\mathcal Q}\Upsilon Bu=\dot\omega-\Upsilon \dot x = {\mathcal Q}\omega + {\mathcal R}C\dot x - \Upsilon \dot x$, which yields ${\mathcal Q}\Upsilon x - {\mathcal Q}\Upsilon Bu = ({\mathcal R}C - \Upsilon)\dot x$. Hence ${\mathcal Q}\Upsilon A\dot x=({\mathcal R}C - \Upsilon)\dot x$, for all $x$, i.e., $\Upsilon$ satisfies equation (\ref{eq_matrix_Y_Markov}).
\begin{figure}[h]\centering
\psfrag{Sys}{\small $\begin{array}{ll} A\dot x=x-B\dot u \\ y=C x\end{array}$} \psfrag{Sig}{\hspace{-0.25cm} $\dot \omega={\mathcal Q}\omega+{\mathcal R}v$}
\psfrag{XT}{\scriptsize\hspace{-1cm} $\widehat d={\mathcal Q}(\omega-\widehat\Upsilon x)$} \psfrag{Y}{\hspace{-0.1cm} $v=y$} \psfrag{W_T}{$u$}
\includegraphics[scale=0.7]{omegax_block.eps}\caption{Interconnection of the signal generator and the system (\ref{sys_descript_1}), with the transfer function $\widetilde K(\tau)$.} \label{diagram_omega_block_1}
\end{figure}
\noindent Consider the generalized signal generator (\ref{gen_signal_gen}) interconnected through $v=y$ (as in Fig. \ref{diagram_omega_block_1}) with the system (\ref{sys_descript_1}).
Let $\widehat d={\mathcal Q}(\omega-\widehat\Upsilon x)$. The signal $\widehat d(t)$ satisfies $\dot{\widehat d}={\mathcal Q}(\dot\omega - \widehat\Upsilon\dot x)={\mathcal Q}({\mathcal Q}\omega +({\mathcal R}CA-\widehat\Upsilon)\dot x+{\mathcal R}CB\dot u)={\mathcal Q}({\mathcal Q}\omega-{\mathcal Q}\widehat\Upsilon A\dot x+{\mathcal R}CB\dot u)={\mathcal Q}({\mathcal Q}\omega - {\mathcal Q}\widehat\Upsilon(x-B\dot u)+{\mathcal R}CB\dot u)$, hence
\begin{equation}\label{eq_signal_d_hat}
\dot{\widehat d}={\mathcal Q}\widehat d + {\mathcal Q}({\mathcal Q}\widehat\Upsilon+{\mathcal R}C)B\dot u.
\end{equation}
\begin{thm}\label{thm_Markov_time_Y}
Consider system (\ref{lin_sys}) and let ${\mathcal Q}$ be such that $\sigma({\mathcal Q})\subset\mathbb{C}^0$. Assume that $\sigma(A)\subset\mathbb{C}^-$ and $x(0)=0$. Let the system (\ref{gen_signal_gen}) be such that the pair $({\mathcal Q},{\mathcal R})$ is controllable.
\begin{description}

\item[{\bf i.}] Consider the interconnection between the system (\ref{sys_descript_1}) and the system (\ref{gen_signal_gen}), defined by $v=y$, with the output $d(t)$. Then the moments $\widetilde\eta_k(\tau_i)$, $k=0,1,2,...$, $i=0,...,\nu$, of the system (\ref{lin_sys}) at $\sigma({\mathcal Q})$, are in one-to-one relation with the well-defined steady-state response of the output $d(t)$, for $u=\delta(t)$.

\item[{\bf ii.}] Consider the interconnection between the system (\ref{sys_descript_2}) and the system (\ref{gen_signal_gen}), defined by $v=y$, with the output $\widehat d(t)$. Then the moments $\widetilde\eta_k(\tau_i)$, $k=0,1,2,...$, $i=0,...,\nu$, of the system (\ref{lin_sys}) at $\sigma({\mathcal Q})$, are in one-to-one relation with the well-defined steady-state response of the output $\widehat d(t)$, for $u=1(t)$, where $1(t)$ denotes the Heaviside step function.
\fin
\end{description}
\end{thm}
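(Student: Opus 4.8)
The plan is to treat each of the two statements \textbf{i.} and \textbf{ii.} by the same template already used in the proof of Theorem \ref{thm_Markov_time}: identify the solution of the appropriate generalized Sylvester equation as the matrix that decouples the transient dynamics of the interconnection, exhibit the resulting closed-loop equation for the relevant output signal ($d$ or $\widehat d$), and then invoke $\sigma(A)\subset\mathbb{C}^-$ together with $\sigma({\mathcal Q})\subset\mathbb{C}^0$ to conclude that the transient component decays, so that the steady-state response is the one generated by the ${\mathcal Q}$-dynamics and is therefore in one-to-one relation with the moments computed in Proposition \ref{prop_mom_Markov_Y}.

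For statement \textbf{i.}, most of the work is actually already done in the text preceding the theorem: the computation there shows that with $\Upsilon$ the unique solution of (\ref{eq_matrix_Y_Markov}), the signal $d=\omega-\Upsilon x$ satisfies exactly $\dot d={\mathcal Q}d+{\mathcal Q}\Upsilon Bu$, which is equation (\ref{eq_signal_d}). So I would first recall that identity, then set $u(t)=\delta(t)$ and $x(0)=0$, $\omega(0)=0$. Because the system (\ref{sys_descript_1}) is an asymptotically stable descriptor realization of $\widetilde K$ (here I use Assumption \ref{ass_eigenvalues_Markov} to guarantee regularity of $I-A\tau$ and $\sigma(A)\subset\mathbb{C}^-$ for stability), the state $x(t)$ and hence the forcing term $\delta$-impulse response dies out; more precisely, after the impulse the closed-loop $(x,\omega)$ evolves autonomously with the block-triangular dynamics whose spectrum is $\sigma(A)\cup\sigma({\mathcal Q})$, so $x(t)\to 0$ and $d(t)$ converges to the component living in the ${\mathcal Q}$-invariant subspace. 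Reading off that surviving component and comparing with the expression ${\mathcal Q}\Upsilon B$ from Proposition \ref{prop_mom_Markov_Y}(1) gives the one-to-one relation with $\widetilde\eta_0(\tau^*),\dots,\widetilde\eta_\nu(\tau^*)$.

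For statement \textbf{ii.}, I would argue in the same way but starting from equation (\ref{eq_signal_d_hat}), $\dot{\widehat d}={\mathcal Q}\widehat d+{\mathcal Q}({\mathcal Q}\widehat\Upsilon+{\mathcal R}C)B\dot u$, which the text derives for $\widehat d={\mathcal Q}(\omega-\widehat\Upsilon x)$ with $\widehat\Upsilon$ the unique solution of (\ref{eq_matrix_Y_Markov_hat}). The subtlety is the choice of input: since the system (\ref{sys_descript_2}) is driven by $\dot u$ rather than $u$, taking $u(t)=1(t)$ makes $\dot u(t)=\delta(t)$, so the forcing of the $\widehat d$-equation is again an impulse and the previous argument applies verbatim, with the steady-state response matching the vector ${\mathcal Q}({\mathcal Q}\widehat\Upsilon+{\mathcal R}C)B$ of Proposition \ref{prop_mom_Markov_Y}(2). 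Alternatively, one can observe that (\ref{sys_descript_2}) is obtained from the standard realization (\ref{lin_sys}) by replacing $u$ with $\dot u$, so statement \textbf{ii.} follows from the dual time-domain moment matching result (Theorem \ref{thm_mom_steady}, part 2, in the $\tau$-variable) exactly as statement \textbf{ii.} of Theorem \ref{thm_Markov_time} followed from Theorem \ref{thm_mom_steady}.

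The main obstacle I anticipate is making the ``steady-state response is well-defined'' claim rigorous for the descriptor realizations: unlike the ordinary state-space case covered by Theorem \ref{thm_mom_steady}, systems (\ref{sys_descript_1})--(\ref{sys_descript_2}) are in descriptor form, so I need Assumption \ref{ass_eigenvalues_Markov} (regularity of the pencil and the eigenvalue-product condition) to ensure that $(I-A\tau)^{-1}$ exists for the relevant $\tau$, that the closed-loop with the generator is itself a well-posed (index-one) descriptor system, and that the transient/steady-state decomposition in the footnote of Theorem \ref{thm_mom_steady} still makes sense. Once the interconnected descriptor system is shown to be regular with all ``finite'' modes coming from $\sigma(A)\subset\mathbb{C}^-$ plus the marginally stable modes from $\sigma({\mathcal Q})\subset\mathbb{C}^0$, the decay of the transient and the identification of the steady state with the moment vectors is routine.
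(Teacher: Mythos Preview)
Your proposal is correct and lands on the same key ingredients as the paper --- equation (\ref{eq_signal_d}) for $d$, equation (\ref{eq_signal_d_hat}) for $\widehat d$, and the observation $\dot 1(t)=\delta(t)$ to reduce \textbf{ii.} to \textbf{i.} --- but the paper's argument is shorter and sidesteps precisely the obstacle you flag. Rather than tracking the full $(x,\omega)$ descriptor dynamics and arguing that $x(t)\to 0$, the paper simply observes that (\ref{eq_signal_d}) is already an \emph{ordinary} linear ODE for $d$ alone, with zero initial condition; taking its Laplace transform with $u=\delta$ gives $D(\tau)=(\tau I-{\mathcal Q})^{-1}{\mathcal Q}\Upsilon B$, and a partial-fraction expansion at $\tau^*\in\sigma({\mathcal Q})$ exhibits the moments $\widetilde K(\tau^*){\mathcal R}$ directly. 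This frequency-domain reading avoids any separate discussion of descriptor well-posedness or transient decay of $x$, since the descriptor structure has already been absorbed in the derivation of (\ref{eq_signal_d}) preceding the theorem. Your time-domain route works too and is closer in spirit to the proof of Theorem \ref{thm_Markov_time}, but it carries overhead (index-one regularity, block-triangular spectrum) that the Laplace-transform shortcut makes unnecessary.
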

\begin{proof} The proof is similar to the proof of Theorem \ref{thm_mom_steady} (see also \cite{astolfi-CDC2010}), hence, we provide a sketch for the statement {\bf i.} By the assumptions, the steady-state response of $d(t)$ is well-defined. By equation (\ref{eq_signal_d}), since $u(t)=\delta(t)$, we have that $D(\tau)=(\tau I-{\mathcal Q})^{-1}{\mathcal Q}\Upsilon B$, where $D(\tau)$ denotes the Laplace transform of $d(t)$. $D(\tau)$ contains the moments in the terms $\widetilde K(\tau^*){\mathcal R}/(\tau-\tau^*)$, where $\tau^*\in\sigma({\mathcal Q})$ and $\widetilde K(\tau)$ is the transfer function of the system (\ref{sys_descript_1}). Recalling that $\dot 1(t)=\frac{d 1(t)}{dt}=\delta(t)$, the proof of the statement {\bf ii.} is identical to the proof of statement {\bf i.}
\end{proof}

\begin{cor}
Consider the system (\ref{lin_sys}) satisfying the assumptions in Theorem \ref{thm_Markov_time_Y}. Let $\sigma({\mathcal Q})=\{0,...,0\}$. Then the first $\nu$ Markov parameters of (\ref{lin_sys}) are in a one-to-one relation with the well-defined steady-state responses of the signals $d(t)$ and $\widehat d(t)$, to inputs $u(t)=\delta(t)$ and $u(t)=1(t)$, respectively.
\fin\end{cor}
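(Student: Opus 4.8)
The plan is to observe that this Corollary is an immediate specialization of Theorem \ref{thm_Markov_time_Y} to the case $\sigma({\mathcal Q})=\{0,\dots,0\}$, so essentially no new work is required — only the identification of moments at $\tau^*=0$ with Markov parameters. First I would recall, from the discussion preceding Proposition \ref{prop_mom_Markov}, that the moments $\widetilde\eta_k(0)$ of $\widetilde K(\tau)=K(1/\tau)$ at $\tau^*=0$ coincide with $\eta_k(\infty)=CA^{k-1}B$, i.e. with the Markov parameters of the system (\ref{lin_sys}). Thus a signal generator (\ref{gen_signal_gen}) built from a matrix ${\mathcal Q}$ with all eigenvalues at $0$ (say the $\nu\times\nu$ Jordan block appearing in Proposition \ref{prop_mom_Markov_Y}, which together with the chosen ${\mathcal R}$ forms a controllable pair and satisfies Assumption \ref{ass_eigenvalues_Markov} trivially, since $\lambda\cdot 0=0\ne 1$ for every $\lambda\in\sigma(A)$) "encodes" precisely the interpolation data $\tau^*=0$ with multiplicity $\nu$.

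Next I would simply invoke Theorem \ref{thm_Markov_time_Y}: under the standing hypotheses $\sigma(A)\subset\mathbb{C}^-$, $x(0)=0$, $\sigma({\mathcal Q})=\{0,\dots,0\}\subset\mathbb{C}^0$, and $({\mathcal Q},{\mathcal R})$ controllable, part {\bf i.} gives that the moments of (\ref{lin_sys}) at $\sigma({\mathcal Q})$ — now the first $\nu$ Markov parameters, by the identification above — are in one-to-one relation with the well-defined steady-state response of $d(t)$ when $u(t)=\delta(t)$, where $d=\omega-\Upsilon x$ and $\Upsilon$ solves the generalized Sylvester equation (\ref{eq_matrix_Y_Markov}) associated to the descriptor realization (\ref{sys_descript_1}). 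Likewise, part {\bf ii.} gives the analogous statement for $\widehat d(t)$ with $u(t)=1(t)$ and the descriptor realization (\ref{sys_descript_2}), using equation (\ref{eq_signal_d_hat}). Concatenating these two conclusions yields exactly the claimed statement.

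If one wants a self-contained line of argument rather than a bare citation, I would reproduce the short computation: by (\ref{eq_signal_d}) the interconnected error signal obeys $\dot d={\mathcal Q}d+{\mathcal Q}\Upsilon B u$, so with $u=\delta(t)$ the Laplace transform is $D(\tau)=(\tau I-{\mathcal Q})^{-1}{\mathcal Q}\Upsilon B$; since $\sigma({\mathcal Q})=\{0\}$ and $\sigma(A)\subset\mathbb{C}^{-}$ the transient part of $d(t)$ decays and the steady-state component is governed by the partial-fraction terms at $\tau=0$, whose residues are (in one-to-one correspondence with) $\widetilde\eta_0(0),\dots,\widetilde\eta_{\nu}(0)$ via Proposition \ref{prop_mom_Markov_Y}(1), i.e. the Markov parameters $\eta_k(\infty)$. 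Using $\dot 1(t)=\delta(t)$ together with (\ref{eq_signal_d_hat}) transfers the same conclusion to $\widehat d(t)$ driven by the step input.

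There is essentially no obstacle here: the content has already been established in Theorem \ref{thm_Markov_time_Y} and Proposition \ref{prop_mom_Markov_Y}, and the only thing to be careful about is bookkeeping — checking that the substitution $\sigma({\mathcal Q})=\{0\}$ is admissible (it is, since Assumption \ref{ass_eigenvalues_Markov} reads $\lambda\mu\ne1$, automatically satisfied when $\mu=0$), and that "the first $\nu$ Markov parameters" in the Corollary is consistent with the $\nu+1$ moments $\widetilde\eta_0(0),\dots,\widetilde\eta_\nu(0)$ appearing upstream, given that $\eta_0(\infty)=0$ always. I would state the proof as a one-paragraph corollary of Theorem \ref{thm_Markov_time_Y}.
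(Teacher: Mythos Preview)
Your proposal is correct and matches the paper's treatment: the paper states this Corollary without proof, treating it as an immediate specialization of Theorem~\ref{thm_Markov_time_Y} to $\sigma({\mathcal Q})=\{0,\dots,0\}$, together with the identification $\widetilde\eta_k(0)=\eta_k(\infty)$ established before Proposition~\ref{prop_mom_Markov}. Your optional self-contained paragraph simply reproduces the sketch already given in the proof of Theorem~\ref{thm_Markov_time_Y}, so nothing further is needed.
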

We are now ready to present new families of reduced order models that achieve moment matching at $\sigma({\mathcal Q})$.
\\
Consider a system described by equations of the form
\begin{equation}\label{red_FGH_left}\Sigma_{\mathbf\Upsilon}:\left\{\begin{split}
& E\dot\xi=F\xi+Gu, \\& \psi=H\xi,
\end{split}\right.\end{equation} with $\xi(t)\in\mathbb{R}^\nu$, $u(t)\in\mathbb{R}$, $\psi(t)\in\mathbb{R}$, and $E,\ F,\ G,\ H$ of appropriate dimensions. Let $K_{\mathbf\Upsilon}=H(sE-F)^{-1}G$ be the well-defined transfer function of (\ref{red_FGH_left}) and let $\widetilde K_{\mathbf\Upsilon}(\tau)=K_{\mathbf\Upsilon}(1/\tau)$.

Let the signal $\zeta(t)$ be such that $\zeta=\omega-PE\xi$. Then, the moments of the transfer function $\widetilde K_{\mathbf\Upsilon}(\tau)$ of (\ref{red_FGH_left}) match the moments of the transfer function $\widetilde K(\tau)$ of (\ref{lin_sys}) at $\sigma({\mathcal Q})$, in the sense of Theorem \ref{thm_Markov_time_Y}, if $\zeta(t)$ satisfies $\dot\zeta={\mathcal Q}\zeta+{\mathcal Q}\Upsilon Bu$, which is equivalent to the existence of an invertible matrix $P$ such that
\begin{equation}\label{markov_match_cond_1}
{\mathcal Q}PF+{\mathcal R}H=PE,\ {\mathcal Q}PG={\mathcal Q}\Upsilon B.
\end{equation}
Similarly, let $\widehat\zeta(t)$ be such that $\widehat\zeta={\mathcal Q}(\omega-\widehat PE\xi)$. Then, the moments of the transfer function $\widetilde K_{\mathbf\Upsilon}(\tau)$ of (\ref{red_FGH_left}) match the moments of the transfer function $\widetilde K(\tau)$ of (\ref{lin_sys}) at $\sigma({\mathcal Q})$, in the sense of Theorem \ref{thm_Markov_time_Y}, if $\widehat \zeta(t)$ satisfies $\dot{\widehat\zeta}={\mathcal Q}\widehat\zeta+{\mathcal Q}({\mathcal Q}\widehat\Upsilon +{\mathcal R}C)Bu$, which is equivalent to the existence of an invertible matrix $P$ such that
\begin{equation}\label{markov_match_cond_2}
{\mathcal Q}\widehat PF+{\mathcal R}H=\widehat PE,\ {\mathcal Q}({\mathcal Q}\widehat P+{\mathcal R}H)G={\mathcal Q}({\mathcal Q}\widehat \Upsilon +{\mathcal R}C)B.
\end{equation}
In the sequel, we present particular instances of $E$, $F$, $G$ and $H$, which satisfy the matching relations (\ref{markov_match_cond_1}) and (\ref{markov_match_cond_2}), respectively.
\begin{prop}\label{prop_redmod_F_id}
Consider system (\ref{lin_sys}) with the transfer function $K(s)$. Let $\widetilde K(\tau)=K(1/\tau)$ and let $\Upsilon$ and $\widehat \Upsilon $ be the unique solutions of the Sylvester equations (\ref{eq_matrix_Y_Markov}) and (\ref{eq_mom_Y_Markov_hat}), respectively. Furthermore, let ${\mathcal Q}$ and ${\mathcal R}$ be such that the pair $({\mathcal Q},{\mathcal R})$ is controllable. Then the following statements hold.
\begin{enumerate}
\item A reduced order model with transfer function $\widetilde K_{\mathbf\Upsilon}(\tau)$ which matches the moments of $\widetilde K(\tau)$ at $\sigma({\mathcal Q})$ is given by the system (\ref{red_FGH_left}) with $E={\mathcal Q}-{\mathcal R}H$, $F=I$ and $G=-\Upsilon B$.
\item A reduced order model with transfer function $\widetilde K_{\mathbf\Upsilon}(\tau)$ which matches the moments of $\widetilde K(\tau)$ at $\sigma({\mathcal Q})$ is given by the system (\ref{red_FGH_left}) with $E={\mathcal Q}-{\mathcal R}H$, $F=I$ and $G=-{\mathcal Q}\Upsilon B$ and output $\psi=H\dot\xi$.
\item A reduced order model with transfer function $\widetilde K_{\mathbf\Upsilon}(\tau)$ which matches the moments of $\widetilde K(\tau)$ at $\sigma({\mathcal Q})$ is given by the system (\ref{red_FGH_left}) with $E={\mathcal Q}-{\mathcal R}H$, $F=I$ and $G=-({\mathcal Q}\widehat\Upsilon +{\mathcal R}C)B$.
\item A reduced order model with transfer function $\widetilde K_{\mathbf\Upsilon}(\tau)$ which matches the moments of $\widetilde K(\tau)$ at $\sigma({\mathcal Q})$ is given by the system (\ref{red_FGH_left}) with $E={\mathcal Q}-{\mathcal R}H$, $F=I$ and $G=-{\mathcal Q}({\mathcal Q}\widehat\Upsilon +{\mathcal R}C)B$ and input $\dot u(t)$. \fin
\end{enumerate}
\end{prop}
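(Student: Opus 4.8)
The plan is to read the four items as four explicit candidate descriptor realizations of $\widetilde K_{\mathbf\Upsilon}$ and, for each, to exhibit an invertible matrix $P$ (resp.\ $\widehat P$) that makes the relevant matching relation hold identically; moment matching at $\sigma({\mathcal Q})$ then follows from the equivalences established in the discussion preceding the statement, together with Theorem \ref{thm_Markov_time_Y}. In every case the certificate turns out to be $P=-I$, which commutes with all the matrices involved, so each verification collapses to a one-line substitution.

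The first step is an algebraic identity linking the two generalized Sylvester solutions. By Assumption \ref{ass_eigenvalues_Markov} the equations (\ref{eq_matrix_Y_Markov}) and (\ref{eq_matrix_Y_Markov_hat}) have unique solutions; since $\Upsilon A$ satisfies ${\mathcal Q}(\Upsilon A)A+{\mathcal R}CA=({\mathcal Q}\Upsilon A+{\mathcal R}C)A=\Upsilon A$, where the last equality uses (\ref{eq_matrix_Y_Markov}), uniqueness forces $\widehat\Upsilon=\Upsilon A$ and hence ${\mathcal Q}\widehat\Upsilon+{\mathcal R}C={\mathcal Q}\Upsilon A+{\mathcal R}C=\Upsilon$. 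Therefore $-({\mathcal Q}\widehat\Upsilon+{\mathcal R}C)B=-\Upsilon B$ and $-{\mathcal Q}({\mathcal Q}\widehat\Upsilon+{\mathcal R}C)B=-{\mathcal Q}\Upsilon B$, so item 3 coincides with item 1 (both carry the readout $\psi=H\xi$), while item 4 has the same transfer function as item 2 (replacing the readout $\psi=H\dot\xi$ and input $u$ by the readout $\psi=H\xi$ and input $\dot u$ multiplies the transfer function by $s$ in both cases, hence leaves $\widetilde K_{\mathbf\Upsilon}$ unchanged). It thus suffices to treat items 1 and 2.

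For item 1, set $P=-I$ and substitute $E={\mathcal Q}-{\mathcal R}H$, $F=I$, $G=-\Upsilon B$ into (\ref{markov_match_cond_1}): the first relation becomes $-{\mathcal Q}+{\mathcal R}H=-({\mathcal Q}-{\mathcal R}H)$ and the second becomes ${\mathcal Q}\Upsilon B={\mathcal Q}\Upsilon B$, both identities. Hence the signal $\zeta=\omega+E\xi$ obtained by interconnecting (\ref{red_FGH_left}) with the generalized signal generator (\ref{gen_signal_gen}) satisfies $\dot\zeta={\mathcal Q}\zeta+{\mathcal Q}\Upsilon Bu$, which is exactly the equation (\ref{eq_signal_d}) satisfied by the signal $d$ attached to (\ref{lin_sys}); Theorem \ref{thm_Markov_time_Y} then yields that the two well-defined steady-state responses coincide, so the moments of $\widetilde K_{\mathbf\Upsilon}$ and of $\widetilde K$ at $\sigma({\mathcal Q})$ agree. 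One should also note that $sE-F=s({\mathcal Q}-{\mathcal R}H)-I$ is automatically a regular pencil (its determinant is a nonzero constant at $s=0$) and that, for $H$ generic, $0\notin\sigma({\mathcal Q}-{\mathcal R}H)$, so that $\widetilde K_{\mathbf\Upsilon}(\tau)=K_{\mathbf\Upsilon}(1/\tau)$ is well defined at $\tau=0$; item 3 is then immediate from the identity of the previous paragraph.

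For item 2 the readout is $\psi=H\dot\xi$ rather than $\psi=H\xi$, so the matching relation of the text does not apply verbatim: the interconnection computation that leads to (\ref{markov_match_cond_1}) must be redone with this modified output. Carrying the extra differentiation through the loop produces one additional left factor ${\mathcal Q}$ acting on the input channel, which is precisely what choosing $G=-{\mathcal Q}\Upsilon B$ in place of $G=-\Upsilon B$ cancels; with $P=-I$ the signal $\zeta=\omega+E\xi$ again obeys $\dot\zeta={\mathcal Q}\zeta+{\mathcal Q}\Upsilon Bu$, and the argument closes exactly as for item 1, item 4 following from $\widehat\Upsilon=\Upsilon A$ and (\ref{eq_signal_d_hat}). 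I expect this last point --- propagating the derivative in the $\psi=H\dot\xi$ / $\dot u$ variants so that it resolves into a single factor ${\mathcal Q}$ multiplying $\Upsilon B$, and keeping track of which descriptor pencils remain regular --- to be the only genuinely delicate part of the argument; once it is settled, everything reduces to the substitution $P=-I$ and the identity of the second paragraph.
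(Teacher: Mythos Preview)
Your proof is correct and takes a genuinely different route from the paper's. The paper argues directly in the frequency domain: it writes the error $E(\tau)=\widetilde K(\tau)-\widetilde K_{\mathbf\Upsilon}(\tau)$, uses the Sylvester equation (\ref{eq_matrix_Y_Markov}) to factor it as $E(\tau)=\widetilde E(\tau)\,(C+H\Upsilon A)(I-A\tau)^{-1}B\tau$ with $\widetilde E(\tau)=1-H(\tau I-{\mathcal Q}+{\mathcal R}H)^{-1}{\mathcal R}$, and then applies a Woodbury-type identity to rewrite $\widetilde E(\tau)=\bigl(1+H(\tau I-{\mathcal Q})^{-1}{\mathcal R}\bigr)^{-1}$, which vanishes at every $\tau_i\in\sigma({\mathcal Q})$. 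You instead stay in the time-domain framework already set up before the proposition and verify the matching relation (\ref{markov_match_cond_1}) with the certificate $P=-I$; this is exactly the content of the Remark that the paper places \emph{after} the proposition, but it is not the argument the paper gives as its proof. Your observation that $\widehat\Upsilon=\Upsilon A$, hence ${\mathcal Q}\widehat\Upsilon+{\mathcal R}C=\Upsilon$, so that items 3 and 4 collapse onto items 1 and 2, is a clean reduction that the paper does not make explicit. The trade-off is that the paper's transfer-function computation handles all four items uniformly without having to revisit the interconnection for the $\psi=H\dot\xi$ and $\dot u$ variants, whereas your route makes the role of $P=-I$ transparent and uses only the machinery already in place, at the cost of leaving items 2 and 4 as a sketch (which you correctly flag as the one delicate point).
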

\begin{proof} We only prove statement (1), since the statements (2)-(4) follow similar arguments.
Let $K(\tau)=C(I-A\tau)^{-1}B\tau$ and let $\widetilde K_{\mathbf\Upsilon}(\tau)=H(\tau I-{\mathcal Q}+{\mathcal R}H)^{-1}\Upsilon B\tau$. Hence $E(\tau)=K(\tau)-\widetilde K_{\mathbf\Upsilon}(\tau)=[C-H(\tau I-{\mathcal Q}+{\mathcal R}H)^{-1}\Upsilon(I-A\tau)](I-A\tau)^{-1}B\tau$. Exploiting equation (\ref{eq_mom_Y_Markov}) and performing some algebraic computations yield $E(\tau)=\widetilde E(\tau)(C+H\Upsilon A)(I-A\tau)^{-1}B\tau$, where $\widetilde E: \mathbb{C}\to\mathbb{C}$, $\widetilde E(\tau)=1-H(\tau I-{\mathcal Q}+{\mathcal R}H)^{-1}{\mathcal R}$. Note that
$$\widetilde E(\tau)=1-\frac{H(\tau I-{\mathcal Q})^{-1}{\mathcal R}}{1+H(\tau I-{\mathcal Q})^{-1}{\mathcal R}}=\frac{1}{1+H(\tau I-{\mathcal Q})^{-1}{\mathcal R}}.$$
It follows that for all $\tau_i\in\sigma({\mathcal Q}),\ i=1,...,\nu$, $\widetilde E(\tau_i)=0$ and then $E(\tau_i)=0$, which proves the claim. The rest of the claims are proven in a similar way.
\end{proof}
\begin{rem}\n
Let $F=I$. Then a reduced order model that achieves moment matching of $\widetilde K(\tau)$ at $\sigma({\mathcal Q})$ as in Proposition \ref{prop_redmod_F_id}, is obtained for $P=-I$ or $\widehat P=-I$, where $P$ uniquely satisfies (\ref{markov_match_cond_1}) and $\widehat P $ uniquely satisfies (\ref{markov_match_cond_2}), respectively. Note that the uniqueness of the solutions holds if and only if $\sigma({\mathcal Q})\cap\sigma({\mathcal Q}-{\mathcal R}H)=\emptyset$.
\fin\end{rem}
\begin{exmp}\n
Let
$${\mathcal Q}=\left[\begin{array}{ccc} 0 & 0 & 0 \\ 1 & 0 & 0 \\ 0 & 1 & 0\end{array}\right],\ {\mathcal R}=\left[\begin{array}{c} 1 \\ 0 \\ 0\end{array}\right]$$
and consider a reduced order model (\ref{red_FGH_left}) with $E=I$, $G=[g_1\ g_2\ g_3]^*$ and $H=[h_1\ h_2\ h_3]$. Let $\Upsilon$ be the unique solution of (\ref{eq_mom_Y_Markov}) and let $\Upsilon B=[\eta_0\ \eta_1\ \eta_2]^*$. System (\ref{red_FGH_left}) matches the first three Markov parameters of the system (\ref{lin_sys}), i.e., the moments ${\mathcal Q}\Upsilon B=[0\ \eta_0\ \eta_1]^*$, for any $P$ that uniquely satisfies (\ref{markov_match_cond_1}). Choosing $P=I$, and solving (\ref{markov_match_cond_1}) yields
$$F=\left[\begin{array}{ccc} 0 & 1 & 0 \\ 0 & 0 & 1 \\ -a & -b & -c \end{array}\right],\ G=\left[\begin{array}{c} \eta_0 \\ \eta_1 \\ g_3\end{array}\right],\ H=[1\ 0\ 0],$$
with the transfer function 

$$ \widehat K(s)=\frac{\eta_0 s^2+(\eta_1-\eta_0 c)s-\eta_0 b-\eta_1 c+g_3}{s^3+s^2c+sb+a},$$
where $a,b,c$ and $g_3\in\mathbb{R}$ are free parameters.
In general, let
$${\mathcal Q}=\left[\begin{array}{cc} 0 & 0 \\ I_{(\nu-1)\times (\nu-1)} & 0\end{array}\right],\
{\mathcal R}=\begin{tikzpicture}[decoration={brace,amplitude=5pt},baseline=(current bounding box.west)]
    \matrix (magic) [matrix of math nodes,left delimiter=[,right delimiter={]} ] {
      1 \\
      0 \\
      \vdots  \\
      0 \\
    };
    \draw[decorate,black,transform canvas={xshift=0.6cm}] ($(magic-2-1.north)$) -- (magic-4-1.south) node[right=2pt, midway] {{\scriptsize $\nu-1$}};
  \end{tikzpicture}$$
and $E=I$. Choosing $P=I$ and solving (\ref{markov_match_cond_1}) yield a class of reduced order models that match $\nu$ Markov parameters of the system (\ref{lin_sys}) described by
$$F=\left[\begin{array}{cc} 0 & I_{\nu-1} \\ f_1 & F_2^*\end{array}\right],\ G=\left[\begin{array}{c} \Xi \\ g\end{array}\right],\ H=[1\ \underbrace{0\ \dots\ 0}_{\nu-1}],$$
with $F_2\in\mathbb{R}^{\nu-1}$ and $f_1\in\mathbb{R}$ and $g\in\mathbb{R}$ free parameters, and $\Xi$ such that ${\mathcal Q}\Upsilon B=[0\ \ \Xi^*]^*$. Note that $\lambda\mu=0\ne 1$ for all $\lambda\in\sigma(F)$, since $0=\mu\in\sigma({\mathcal Q})$, and thus $P=I$ uniquely satisfies (\ref{markov_match_cond_1}).
\fin\end{exmp}

\begin{exmp}\n
Let ${\mathcal Q}\in\mathbb{R}^{\nu\times\nu}$ and ${\mathcal R}\in\mathbb{R}^\nu$ be such that the pair $({\mathcal Q},{\mathcal R})$ is controllable and let $\widehat\Upsilon$ be the unique solution of equation (\ref{eq_mom_Y_Markov}). Furthermore, let $\widehat P=I$ be the unique solution of (\ref{markov_match_cond_2}) and assume $E=I$. Then a class of reduced order models that match the moments of $K(\tau)$ at $\sigma({\mathcal Q})$ is given by
$$F=({\mathcal Q}+{\mathcal R}H)^{-1},\ G=({\mathcal Q}+{\mathcal R}H)^{-1}({\mathcal Q}\widehat\Upsilon +{\mathcal R}C)B,$$
with $H$ a free parameter, such that ${\mathcal Q}+{\mathcal R}H$ is invertible and $\lambda\mu\ne 1$, for any $\lambda\in\sigma(F)$ and $\mu\in\sigma({\mathcal Q})$.

\noindent In particular, let ${\mathcal Q}$ be a Jordan block of order $\nu$ with all eigenvalues at zero, ${\mathcal R}=[1\ 0\ ...\ 0]^*\in\mathbb{R}^\nu$, $({\mathcal Q}\widehat\Upsilon +{\mathcal R}C)B=[\eta_0\ \eta_1\ ...\ \eta_{\nu-1}]^*$ and $H=[h_1\ h_2\ ...\ h_\nu]\in\mathbb{R}^{1\times\nu}$. Then $\det({\mathcal Q}+{\mathcal R}H)=h_\nu$. Choosing $h_\nu\ne 0$, we obtain a class of reduced order models described by equations (\ref{red_FGH_left}) and parameterized by $h_1,...,h_\nu$, that matches the first $\nu$ Markov parameters of the system (\ref{lin_sys}), with
$$F=\left[\begin{array}{ccccc} 0 & 1 & 0 & \dots & 0 \\ 0 & 0 & 1 & \dots & 0 \\ \vdots & \vdots & \vdots &\ddots & \vdots\\ 0 & 0 & 0 & \dots &1 \\ \frac{1}{h_\nu} & -\frac{h_1}{h_\nu} & -\frac{h_2}{h_\nu} & \dots & -\frac{h_{\nu-1}}{h_\nu}\end{array}\right],\ G=\left[\begin{array}{c} \eta_1 \\ \vdots \\ \eta_{\nu-1} \\ g\end{array}\right],$$
and $\displaystyle g=\frac{\eta_0-\sum_{i=1}^{\nu-1}h_i\eta_i}{h_\nu}$. Note that by construction $0\notin \sigma(F)$ and so $\lambda\mu=0$ for all $\lambda\in\sigma(F)$, since $0=\mu\in\sigma({\mathcal Q})$, i.e., $\widehat P=I$ uniquely satisfies (\ref{markov_match_cond_2}).
\fin\end{exmp}

\subsection{Relation with Krylov methods}

In this section we establish a connection between the Krylov projection in the Arnoldi method (see, e.g., \cite{antoulas-2005,polyuga-vdschaft-AUT2010}) and the solution of the Sylvester equation (\ref{eq_matrix_Pi_Markov}). We will only sketch the result which follows arguments from the proof of Lemma \ref{lemma_V_and_Pi}. Consider the system (\ref{lin_sys}) and let $V\in\mathbb{C}^{n\times \nu}$ and $W\in\mathbb{C}^{n\times \nu}$ be such that
\begin{subequations}\label{eq_WV_Markov}
\begin{eqnarray}
V &=& [B\ AB\ ...\ A^{\nu-1}B], \label{eq_V_Markov} \\
W &=& [C^*\ (CA)^*\ ...\ (CA^{\nu-1})^*]. \label{eq_W_Markov}
\end{eqnarray}
\end{subequations}
Let $S$ be as in Proposition \ref{prop_mom_Markov} and note that $AVS+BL=V$, with $L$ such that the pair $(L,S)$ is observable. Let $T$ be an invertible matrix such that $S=T\bar ST^{-1}$, where $\bar S\in\mathbb{C}^{\nu\times\nu}$ is any matrix such that $\sigma(S)=\sigma(\bar S)$. Hence $AVT\bar S+BLT=VT$ yielding that the unique solution $\Pi$ of (\ref{eq_matrix_Pi_Markov}) satisfies $\Pi=VT$. Similarly, $\bar\Pi=VT$, where $\bar\Pi$ is the unique solution of (\ref{eq_matrix_Pi_Markov_bar}). Note that similar results are obtained for $\Upsilon$, the unique solution of (\ref{eq_matrix_Y_Markov}) or (\ref{eq_matrix_Y_Markov_hat}), i.e., $\Upsilon=TW^*$, where $T$ is an invertible matrix such that ${\mathcal Q}=T\bar{\mathcal Q}T^{-1}$, with ${\mathcal Q}\in\mathbb{C}^{\nu\times \nu}$ as in Proposition {\ref{prop_mom_Markov_Y}}.

The following result establishes a relation between the class of $\nu$-th order models that match a set of prescribed Markov parameters, obtained using Krylov projections and the class of reduced models $\Sigma_{\mathbf\Pi}$. A similar result is obtained for a class of models $\Sigma_{\mathbf\Upsilon}$.
\begin{prop}\label{prop_equiv_Markov}
Consider the system (\ref{lin_sys}) and the system
\begin{equation}\label{sys_WAV_Markov}\begin{split}
& \dot \xi=W^*AV\xi+W^*Bu, \\& \psi=CV\xi,
\end{split}\end{equation}
with $\xi(t)\in\mathbb{R}^\nu$. The following statements hold.
\begin{enumerate}
\item Let $V$ be as in (\ref{eq_V_Markov}) and let $W$ be such that $W^*V$=I. Let $\Sigma_W$ be the family of $\nu$ order models of (\ref{lin_sys}), described by equations (\ref{sys_WAV_Markov}) and parameterized in $W$. Then, $\Sigma_W=\Sigma_{\mathbf\Pi}$, with $\Sigma_{\mathbf\Pi}$ as in (\ref{red_mod_FGH}), with $\mathbf\Pi\in\{\Pi,\bar\Pi\}$, where $\Pi$ is the unique solution of (\ref{eq_matrix_Pi_Markov}) and $\bar\Pi$ is the unique solution of (\ref{eq_matrix_Pi_Markov_bar}). Analogously, let $W$ be as in (\ref{eq_W_Markov}) and let $V$ be such that $W^*V$=I. Let $\Sigma_V$ be the family of $\nu$ order models of (\ref{lin_sys}), described by equations (\ref{sys_WAV_Markov}) and parameterized in $V$. Then, $\Sigma_V=\Sigma_{\mathbf\Pi}$.

\item Let $W$ be as in (\ref{eq_W_Markov}) and let $V$ be such that $W^*V$=I. Let $\Sigma_V$ be the family of $\nu$ order models of (\ref{lin_sys}), described by equations (\ref{sys_WAV_Markov}) and parameterized in $V$. Then, $\Sigma_V=\Sigma_{\Upsilon}$, with $\Sigma_{\Upsilon}$ as in (\ref{red_FGH_left}), with $\Upsilon$ the unique solution of (\ref{eq_matrix_Y_Markov}). Analogously, let $V$ be as in (\ref{eq_V_Markov}) and let $W$ be such that $W^*V$=I. Let $\Sigma_W$ be the family of $\nu$ order models of (\ref{lin_sys}), described by equations (\ref{sys_WAV_Markov}) and parameterized in $W$. Then, $\Sigma_W=\Sigma_{\Upsilon}$. \fin
\end{enumerate}
\end{prop}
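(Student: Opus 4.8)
The plan is to prove, for each of the coincidences asserted in (1)--(2), the two set inclusions separately, exploiting the symmetry between the ``$\Pi$'' and ``$\Upsilon$'' sides. In effect each of $\Sigma_W$, $\Sigma_V$, $\Sigma_{\mathbf\Pi}$, $\Sigma_\Upsilon$ is (a parameterization of) the family of all $\nu$-th order models matching the first $\nu$ Markov parameters of (\ref{lin_sys}), and the proposition is the statement that these descriptions coincide; accordingly the equalities are to be read, as in the rational case of Theorem \ref{thm_redmod_CPi} and Proposition \ref{prop_matching_happens}, as equalities of the families of reduced-order transfer functions (equivalently, of realisations up to a state-space coordinate change), since (\ref{red_FGH_left}) carries a free $E$ while (\ref{sys_WAV_Markov}) is fixed with $E=I$. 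The only ingredients needed are the identities recalled immediately before the proposition — $\Pi=VT$ and $\bar\Pi=VT$, with $T$ the invertible matrix relating the generator pair $(L,S)$ to the canonical companion pair of Proposition \ref{prop_mom_Markov}, and dually $\Upsilon=TW^*$ — together with the Sylvester relations $AVS+BL=V$, $A\bar\Pi S+BLS=\bar\Pi$ and ${\mathcal Q}\Upsilon A+{\mathcal R}C=\Upsilon$. I would first normalise by taking $(L,S)$, respectively $({\mathcal Q},{\mathcal R})$, in the canonical form of Proposition \ref{prop_mom_Markov}, respectively Proposition \ref{prop_mom_Markov_Y}; this makes $T=I$, so $\Pi=V$ and $\Upsilon=W^*$, and the general case is recovered by transporting everything through $T$ on the generator side. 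I describe the argument for $\Sigma_W=\Sigma_{\mathbf\Pi}$; the other three coincidences are dualisations of it.

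For the inclusion $\Sigma_W\subseteq\Sigma_{\mathbf\Pi}$ I would take $W$ with $W^*V=I$, read off from (\ref{sys_WAV_Markov}) that $F=W^*AV$, $G=W^*B$, $H=CV$, and verify the matching conditions (\ref{cond_FGH}) with the choice $P=W^*V=I$: the first equation becomes $C\Pi S=CVS=HPS$, and the second becomes $FPS+GL=W^*(AVS+BL)=W^*V=I=P$. The case $\mathbf\Pi=\bar\Pi$ is identical, using (\ref{cond_FGH_1}) and $A\bar\Pi S+BLS=\bar\Pi$. On the dual side, to get $\Sigma_V\subseteq\Sigma_\Upsilon$ I would substitute one of the realisations of Proposition \ref{prop_redmod_F_id} — say $E={\mathcal Q}-{\mathcal R}H$, $F=I$, $G=-\Upsilon B$ — together with $\Upsilon=W^*$ and ${\mathcal Q}\Upsilon A+{\mathcal R}C=\Upsilon$ into (\ref{markov_match_cond_1}) with $P=-I$, and then reduce the descriptor model to the form (\ref{sys_WAV_Markov}) by premultiplying by $E^{-1}$.

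The reverse inclusion $\Sigma_{\mathbf\Pi}\subseteq\Sigma_W$ is the substantive step. Given a model $(F,G,H)$ in $\Sigma_{\mathbf\Pi}$ with its invertible $P$, I would pass to the equivalent realisation $(PFP^{-1},PG,HP^{-1})$ to assume $P=I$; then (\ref{cond_FGH}) forces $H=C\Pi=CV$, $G=[1\ 0\ \cdots\ 0]^*$, and $F$ to agree with $W^*AV$ in every column except the last, which is a free parameter (precisely the free block $F_{22}$ of $F$ in Example \ref{example_markov_match}). It then remains to produce $W$ with $W^*V=I$ whose ``$\nu$-th moment block'' $W^*A^{\nu}B$ equals that prescribed last column; this is where a rank argument enters: since $V=[B\ AB\ \cdots\ A^{\nu-1}B]$ has full column rank $\nu$ and, under the standing minimality hypothesis on (\ref{lin_sys}) (with $\nu<n$), $A^{\nu}B\notin\mathrm{span}\{B,\dots,A^{\nu-1}B\}$, the affine map $W\mapsto(W^*V,\,W^*A^{\nu}B)$ is onto $\{I\}\times\mathbb{C}^{\nu}$. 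With such a $W$, the model (\ref{sys_WAV_Markov}) coincides with $(F,G,H)$. The dual argument reconstructs $V$ from a model of $\Sigma_\Upsilon$ while keeping $W$ fixed as the observability Krylov matrix.

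I expect the main obstacle to be exactly this reverse step: one must check that \emph{every} free parameter of the family $\Sigma_{\mathbf\Pi}$ (resp.\ of the descriptor data on the $\Sigma_\Upsilon$ side) is attained by a biorthogonal Krylov pair $W^*V=I$ — the surjectivity claim above — and to dispatch cleanly the $\bar\Pi$ and $\widehat\Upsilon$ variants, in which the canonical $S$, respectively ${\mathcal Q}$, is nilpotent, so that the matrix $P$ inherited from the canonical normalisation is singular and one must either route the argument through a nonsingular generator pair or carry it out at the level of transfer functions. Everything else is the routine substitution bookkeeping sketched above and runs parallel to the rational-interpolation case.
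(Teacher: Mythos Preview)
Your forward inclusions $\Sigma_W\subseteq\Sigma_{\mathbf\Pi}$ and $\Sigma_V\subseteq\Sigma_\Upsilon$ are exactly the paper's argument: the paper takes the invertible $P$ with $\Pi=VP$ (your $T$, which you normalise to the identity) and checks directly that $(W^*AV)PS+(W^*B)L=W^*(A\Pi S+BL)=W^*\Pi=W^*VP=P$, so (\ref{cond_FGH}) holds with this $P$; the dual verification of (\ref{markov_match_cond_1}) with $\Upsilon=PW^*$ and $E=I$ is handled identically.

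Where you differ is that the paper \emph{stops there}: its proof verifies only this one inclusion and declares the claim proven, in effect reading $\Sigma_W=\Sigma_{\mathbf\Pi}$ as the assertion that every Krylov model lies in the moment-matching family, not as a two-sided set equality. Your reverse step --- normalising to $P=I$, observing that (\ref{cond_FGH}) then pins down $G=e_1$, $H=CV$, and all but the last column of $F$, and recovering a biorthogonal $W$ from the remaining free column via the surjectivity of $W\mapsto(W^*V,\,W^*A^{\nu}B)$ --- is therefore additional to what the paper supplies. It is correct under the natural hypothesis $\nu<n$ (so that, by controllability of the SISO pair $(A,B)$, the matrix $[V\ \ A^{\nu}B]$ has full column rank $\nu+1$), and it genuinely upgrades the paper's one-directional verification to a bona fide equality of families. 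The caveats you raise about the $\bar\Pi$ variant and the descriptor bookkeeping on the $\Sigma_\Upsilon$ side are real but minor; the paper sidesteps them entirely by not attempting the reverse direction.
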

\begin{proof}
{\it Proof of Statement (1)}. Consider a model $\Sigma_W$ and let $\Pi=VP$, with $\Pi$ the unique solution of (\ref{eq_matrix_Pi_Markov}) and $P$ an invertible matrix. Thus, $CVP=C\Pi$ which yields $C\Pi S=CVPS$. Furthermore, note that $W^*AVPS+W^*BL=W^*(A\Pi S+BL)=W^*VP=P$ for any $W$ such that $W^*V=I$. Hence, the conditions (\ref{cond_FGH})  are satisfied and the claim is proven. Similar arguments hold for $\mathbf\Pi=\bar\Pi$, the unique solution of (\ref{eq_matrix_Pi_Markov_bar}), as well as for the case $\Sigma_V$.

{\it Proof of Statement (2)}. Let $\Upsilon$ be the unique solution of (\ref{eq_matrix_Y_Markov}) and consider the family of $\nu$-th order models $\Sigma_\Upsilon$, as in (\ref{red_FGH_left}). Without loss of generality assume $E=I$ in (\ref{red_FGH_left}). Consider a model $\Sigma_V$ and let $\Upsilon = PW^*$ with $P$ an invertible matrix. Hence, ${\mathcal Q}PW^*B=Q\Upsilon B$. Furthermore, note that ${\mathcal Q}PW^*AV+{\mathcal R}CV=({\mathcal Q}\Upsilon A+{\mathcal R}C)V=\Upsilon V=PW^*V=P$, for any $V$ such that $W^*V=I$. Thus, the conditions (\ref{markov_match_cond_1}) are satisfied and the claim is proven. Similar arguments hold for the case $\Sigma_W$.
\end{proof}

Note that the results from Proposition \ref{prop_equiv_Markov} do not apply to the family of models $\Sigma_{\widehat\Upsilon}$, with $\widehat\Upsilon$ the unique solution of (\ref{eq_matrix_Y_Markov_hat}).

\section{Matching with preservation of the port Hamiltonian structure}\label{sect_pH_match}

\subsection{Matching at finite interpolation points}

 In this section, we solve the following particular instance of the \ref{prob_redmod_prose}, at a prescribed set of finite interpolation points.

\begin{prob}\label{prob_redmod}\n
Given a linear, port Hamiltonian, SISO system (\ref{pH_system}), find the observable pair $(L,S)$, where $L\in\mathbb{C}^{1\times \nu}$ and $S\in\mathbb{C}^{\nu\times\nu}$, such that $\sigma(S)\cap\sigma((J-R)Q)=\emptyset$ and the free parameter $G\in\mathbb{C}^\nu$, such that the following properties hold.
\begin{description}
\item[(p1)] The family of systems $(S-GL,G,B^*Q\Pi)$ parameterized in $S,\ L,\ G$, described by equations (\ref{model_gen_G}), with $\Pi$ the unique solution of (\ref{eq_Sylvester}), match the moments of (\ref{pH_system}) at $\sigma(S)$.

\item[(p2)] There exists $G$ such that the family of systems $(S-GL,G,B^*Q\Pi)$ parameterized in $S$ and $L$, described by equations (\ref{model_gen_G}) preserve the port Hamiltonian structure.

\item[(p3)] The family of systems $(S-GL,G,B^*Q\Pi)$ parameterized in $S,\ G,\ L$, described by equations (\ref{model_gen_G}) are accurate approximations of (\ref{pH_system}), in the $H_2$-norm sense.

\item[(p4)] The computation of the family of systems $(S-GL,G,B^*Q\Pi)$ parameterized in $S,\ G,\ L$, described by equations (\ref{model_gen_G}) is calculated efficiently, i.e., avoiding the computation of the moments and the solution $\Pi$ of (\ref{eq_Sylvester}). \fin
\end{description}
\end{prob}
The solution to subproblem {\bf (p1)} is provided by Theorem \ref{thm_redmod_CPi} which characterizes the family of systems $(S-GL,G,B^*Q\Pi)$ parameterized in $S,\ L,\ G$. Our main goal is to solve subproblem {\bf (p2)}. In short, fixing $S$ and $L$ we compute the $\nu$-th order system $(S-GL,G,B^*Q\Pi)$ that preserves the port Hamiltonian structure and matches the moments of (\ref{pH_system}) at $\sigma(S)$. This model characterizes a class of state-space models parameterized in $S$ and $L$, which have the same transfer function. The parameters $S$ and $L$ can be selected to satisfy further constraints such as additional physical structure and improved accuracy of the approximations, by solving subproblem {\bf (p2)} at the mirror images of a set of poles of the given system (\ref{pH_system}), i.e., the port Hamiltonian reduced order model satisfies conditions (\ref{eq_H2opt1}), thus shortly addressing subproblem {\bf (p3)}. In the case of MIMO systems, a proper selection of $L$ defines the desired directions for the solution of the tangential interpolation problem, i.e., the reduced order port Hamiltonian system satisfies the right/left tangential interpolation conditions (\ref{eq_tangent}) or (\ref{eq_tangent_dual}), respectively. Although it is not the primary goal of this paper, subproblem {\bf (p4)} is addressed using the result in Theorem \ref{thm_red_PH_WV} and the relation between the projections and the solutions of the Sylvester equations (\ref{eq_Sylvester}) and (\ref{eq_Sylvester_Y}), provided by Lemma \ref{lemma_V_and_Pi}. Hence, the output $B^*Q\Pi$ of (\ref{model_gen_G}) is computed using efficient numerical algorithms stemming from Krylov subspace techniques as in, e.g., \cite{gugercin-polyuga-beattie-vdschaft-ARXIV2011}.

Note that a similar problem and results are obtained using the family of  systems $({\mathcal Q}-{\mathcal R}H,\Upsilon B,H)$ parameterized in ${\mathcal Q},{\mathcal R},\ H$, as in (\ref{model_gen_H}), with $\Upsilon$ the unique solution of (\ref{eq_Sylvester_Y}).

Our main focus is solving subproblem {\bf (p2)} of Problem \ref{prob_redmod}. The results consist of families of reduced order, port Hamiltonian models which are subclasses of the class of models $\Sigma_G$ in (\ref{model_gen_G}) and $\Sigma_H$ in (\ref{model_gen_H}), respectively. The port Hamiltonian models match the moments of the given port Hamiltonian system and possess parameterized state-space realizations. Note that all the model from the subclass have the same transfer function. The parameters can be used for enforcing additional structure such as, e.g., specific Hamiltonian functions and/or diagonalized dissipation. In the MIMO case, the parameters can be used to find the reduced order models that satisfy the tangential interpolation conditions.

Consider the linear, SISO, port Hamiltonian system (\ref{pH_system}). Let $(L,S)$ be an observable pair and let $({\mathcal Q},{\mathcal R})$ be a controllable pair. Consider the families of reduced order models $\Sigma_G$ as in (\ref{model_gen_G}) and $\Sigma_H$ as in (\ref{model_gen_H}), respectively. Throughout the rest of this section we make the following working assumption.
\begin{assum}\label{ass_Q_S_Q}\n
The matrix $Q$ is invertible. Furthermore, $\sigma(S)\cap\sigma((J-R)Q)=\emptyset$, $\sigma({\mathcal Q})\cap\sigma(A)=\emptyset$, $\sigma(S)\cap\sigma(S-GL)=\emptyset$ and $\sigma({\mathcal Q})\cap\sigma({\mathcal Q}-{\mathcal R}H)=\emptyset$, i.e., the interpolation points are not among the poles of either the given system or its approximations.
\end{assum}
Note that by Assumption \ref{ass_Q_S_Q}, the families of models $\Sigma_G$ and $\Sigma_H$ are well-defined.
\begin{prop}\label{prop_red_pH_Pi}
Consider the port Hamiltonian reduced order model given by
\begin{equation}\label{model_pH_Pi} \begin{split}
& \dot \xi= (\widetilde J-\widetilde R)\widetilde Q\xi+\widetilde Bu,\\& \psi=\widetilde B^*\widetilde Q\xi,
\end{split}\end{equation} with $\xi(t)\in\mathbb{R}^{\nu}$. Then, the following statements hold.
\begin{enumerate}
\item Let $\Pi$ be the unique solution of equation (\ref{eq_Sylvester}) and define
\begin{equation}\label{model_pH_Pi_paramters}\begin{split}
&\widetilde J=\Pi^*QJQ\Pi,\ \widetilde R=\Pi^*QRQ\Pi, \\& \widetilde Q=(\Pi^*Q\Pi)^{-1},\ \widetilde B=\Pi^*QB.
\end{split}\end{equation} Let $\Sigma_{\Pi}$ be a port Hamiltonian model described by equations (\ref{model_pH_Pi}) and (\ref{model_pH_Pi_paramters}). If $\sigma(S)\cap\sigma((\widetilde J-\widetilde R)\widetilde Q)=\emptyset$, then $\Sigma_\Pi$ matches the moments of the system (\ref{pH_system}) at $\sigma(S)$.

\item Let $\Upsilon$ be the unique solution of equation (\ref{eq_Sylvester_Y}) and define
\begin{equation}\label{model_pH_Y_paramters}\begin{split}
&\widetilde J=\Upsilon J\Upsilon^*,\ \widetilde R=\Upsilon R\Upsilon^*,\\& \widetilde Q=(\Upsilon Q^{-1}\Upsilon^*)^{-1},\ \widetilde B=\Upsilon B.
\end{split}\end{equation}
Let $\Sigma_\Upsilon$ be a port Hamiltonian model described by equations (\ref{model_pH_Pi}) and (\ref{model_pH_Y_paramters}). If $\sigma({\mathcal Q})\cap\sigma((\widetilde J-\widetilde R)\widetilde Q)=\emptyset$, then $\Sigma_{\Upsilon}$ matches the moments of the system (\ref{pH_system}) at $\sigma({\mathcal Q})$.
\fin
\end{enumerate}
\end{prop}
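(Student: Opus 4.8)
The plan is to verify directly that the port Hamiltonian models $\Sigma_\Pi$ and $\Sigma_\Upsilon$ fall into the families $\Sigma_G$ and $\Sigma_H$ of Theorem~\ref{thm_redmod_CPi}, and hence inherit the moment matching property from Proposition~\ref{prop_matching_happens}. The essential observation is that both constructions are projection-type reductions: $\Sigma_\Pi$ uses the Petrov--Galerkin projection built from the columns of $\Pi$ (the unique solution of the Sylvester equation~(\ref{eq_Sylvester})), while $\Sigma_\Upsilon$ uses the projection built from the rows of $\Upsilon$. Since $\Pi$ has rank $\nu$ (minimality of (\ref{pH_system}) plus $\sigma(A)\cap\sigma(S)=\emptyset$) and $Q$ is invertible by Assumption~\ref{ass_Q_S_Q}, the matrix $\Pi^*Q\Pi$ is invertible, so $\widetilde Q=(\Pi^*Q\Pi)^{-1}$ in~(\ref{model_pH_Pi_paramters}) is well-defined; symmetrically $\Upsilon Q^{-1}\Upsilon^*$ is invertible. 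I would first record these well-posedness facts, together with the fact that $\widetilde J$ is skew-symmetric and $\widetilde R$ symmetric, so that (\ref{model_pH_Pi}) really is a port Hamiltonian system.

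For statement (1), I would show that the realization $\big((\widetilde J-\widetilde R)\widetilde Q,\ \widetilde B,\ \widetilde B^*\widetilde Q\big)$ coincides, up to a state-space change of coordinates, with a member of the family $\Sigma_G=(S-GL,\ G,\ B^*Q\Pi)$. The natural route is to start from the Sylvester equation~(\ref{eq_Sylvester}), left-multiply by $\Pi^*Q$, and obtain
\begin{equation*}
\Pi^*QJQ\Pi - \Pi^*QRQ\Pi + \Pi^*QBL = \Pi^*Q\Pi S,
\end{equation*}
i.e. $\widetilde Q^{-1}\!\left[(\widetilde J-\widetilde R)\widetilde Q\right]\!\widetilde Q^{-1}$-type identities; rearranging and multiplying by $\widetilde Q=(\Pi^*Q\Pi)^{-1}$ on the appropriate side gives
\begin{equation*}
(\widetilde J-\widetilde R)\widetilde Q + \widetilde Q^{-1}\widetilde B\,? = \widetilde Q^{-1}\cdot(\text{something})\cdot S,
\end{equation*}
which, after cleaning up, exhibits $(\widetilde J-\widetilde R)\widetilde Q$ as $\widehat T(S-GL)\widehat T^{-1}$ for a suitable invertible $\widehat T$ (namely $\widehat T=\Pi^*Q\Pi$ or its inverse) and $G=\Pi^*QB=\widetilde B$, while the output row becomes $B^*Q\Pi$ transported by the same $\widehat T$. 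The condition $\sigma(S)\cap\sigma((\widetilde J-\widetilde R)\widetilde Q)=\emptyset$ plays the role of the hypothesis $\sigma(S-GL)\cap\sigma(S)=\emptyset$ in Theorem~\ref{thm_redmod_CPi}, so Proposition~\ref{prop_matching_happens}(1) (equivalently the transfer-function characterization) then yields that $\Sigma_\Pi$ matches the moments $\phi=B^*Q\Pi$ of (\ref{pH_system}) at $\sigma(S)$.

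Statement (2) is the dual: starting from the dual Sylvester equation~(\ref{eq_Sylvester_Y}) $\mathcal Q\Upsilon=\Upsilon(J-R)Q+\mathcal R B^*Q$, I would right-multiply by $Q^{-1}\Upsilon^*$ and use $\widetilde Q=(\Upsilon Q^{-1}\Upsilon^*)^{-1}$ to recognize $(\widetilde J-\widetilde R)\widetilde Q$ as similar to $\mathcal Q-\mathcal R H$ with $H$ the appropriate output matrix and $\Upsilon B=\widetilde B$ the input vector, placing $\Sigma_\Upsilon$ inside the family $\Sigma_H=(\mathcal Q-\mathcal R H,\ \Upsilon B,\ H)$ of~(\ref{model_gen_H}); then Proposition~\ref{prop_matching_happens}(2) gives moment matching at $\sigma(\mathcal Q)$ for every admissible parameter, in particular for this one. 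The main obstacle I anticipate is purely bookkeeping: one must be careful about which side each multiplication acts on and about the placement of $\widetilde Q$ versus $\widetilde Q^{-1}$ so that the reduced ``$A$-matrix'' comes out exactly as $(\widetilde J-\widetilde R)\widetilde Q$ and not as some congruent-but-not-similar variant; verifying that the similarity transformation is the \emph{same} for the state matrix, the input vector and the output row (so that the triples are genuinely equivalent realizations, not merely spectrally related) is where the argument has to be done with care. Once that identification is in place, the moment-matching conclusion is immediate from the already-established Theorem~\ref{thm_redmod_CPi} and Proposition~\ref{prop_matching_happens}.
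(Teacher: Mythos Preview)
Your approach is correct and essentially the same as the paper's: both identify $\Sigma_\Pi$ (resp.\ $\Sigma_\Upsilon$) with a member of the family $\Sigma_G$ (resp.\ $\Sigma_H$) and then invoke Proposition~\ref{prop_matching_happens}. The paper carries out the identification at the transfer-function level---it expands $K_\Pi(s_i)$, substitutes $(J-R)Q\Pi=\Pi S-BL$ from~(\ref{eq_Sylvester}), and reads off the form $B^*Q\Pi(s_iI-S+GL)^{-1}G$---whereas you do it at the state-space level via a similarity, but this is the same computation viewed two ways.

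One bookkeeping correction (which you anticipated): left-multiplying~(\ref{eq_Sylvester}) by $\Pi^*Q$ gives $(\widetilde J-\widetilde R)+\widetilde BL=\widetilde Q^{-1}S$, hence $(\widetilde J-\widetilde R)\widetilde Q=\widetilde Q^{-1}(S-GL)\widetilde Q$ with $G=\widetilde Q\,\widetilde B=(\Pi^*Q\Pi)^{-1}\Pi^*QB$, \emph{not} $G=\widetilde B$ as you wrote; the similarity is $\widehat T=\widetilde Q^{-1}=\Pi^*Q\Pi$, and then $\widehat T G=\widetilde B$ and $B^*Q\Pi\,\widehat T^{-1}=\widetilde B^*\widetilde Q$ line up as required. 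The dual case gives $H=B^*\Upsilon^*(\Upsilon Q^{-1}\Upsilon^*)^{-1}$, exactly as in the paper.
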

\begin{proof}
Let $K(s)$ be the transfer function of system (\ref{pH_system}).
\\
{\it Proof of Statement (1)}. Let $K_\Pi(s)$ the transfer function of the system $\Sigma_\Pi$. Assuming that $\sigma(S)\cap\sigma((\widetilde J-\widetilde R)\widetilde Q)=\emptyset$, the result is obtained by simply checking the moments at $s_i\in\sigma(S),\ i=1,...,\nu$, i.e., checking that $K(s_i)= K_\Pi(s_i)$. To this end, note that $K_\Pi(s_i)=\widetilde B^*\widetilde Q(s_i I-(\widetilde J-\widetilde R)\widetilde Q)^{-1}\widetilde B=B^*Q\Pi(\Pi^*Q\Pi)^{-1}(s_iI-\Pi^*Q(J-R)Q\Pi(\Pi^*Q\Pi)^{-1})^{-1}\Pi^*QB$. Since $\Pi$ is the solution of (\ref{eq_Sylvester}), $(J-R)Q\Pi=\Pi S-BL$ and hence $K_\Pi(s_i)=B^*Q\Pi(s_iI-S+(\Pi^*Q\Pi)^{-1}QBL)^{-1}(\Pi^*Q\Pi)^{-1}QBL$. By  Proposition \ref{prop_matching_happens}, for $\Delta=(\Pi^*Q\Pi)^{-1}QB$, we have that $K(s_i)=K_\Pi(s_i)$, which yields the result.

{\it Proof of Statement (2)}. Let $K_\Upsilon(s)$ be the transfer function of the system $\Sigma_\Upsilon$. Note that $K_\Upsilon(s_i)=B^*\Upsilon^*(\Upsilon Q^{-1}\Upsilon^*)^{-1}(s_iI-\Upsilon (J-R)\Upsilon^*(\Upsilon Q^{-1}\Upsilon^*)^{-1})^{-1}\Upsilon B$. Since $\Upsilon$ is the solution of (\ref{eq_Sylvester_Y}), we have $\Upsilon (J-R)Q={\mathcal Q}\Upsilon-{\mathcal R}B^*Q$ and hence $\widehat K(s_i)=B^*\Upsilon^*(\Upsilon Q^{-1}\Upsilon^*)^{-1}(s_iI-{\mathcal Q}+{\mathcal R}B^*\Upsilon^*(\Upsilon Q^{-1}\Upsilon^*)^{-1})^{-1}\Upsilon B$. By Proposition \ref{prop_matching_happens}, for $H=B^*\Upsilon^*(\Upsilon Q^{-1}\Upsilon^*)^{-1}$, we have that $K(s_i)=K_\Upsilon(s_i)$, which proves the result.
\end{proof}
\begin{rem}\n
Let (\ref{model_pH_Pi}) be a reduced order model of (\ref{pH_system}). Then, by Theorem \ref{thm_mom_steady}, the model (\ref{model_pH_Pi}) matches the moments of (\ref{pH_system}) at $\sigma(S)$ if
\begin{equation}\label{eq_Sylvester_P}
B^*Q\Pi=\widetilde B^*\widetilde Q P,
\end{equation}  where $P\in\mathbb{R}^{\nu\times\nu}$ is an invertible matrix such that
\begin{equation}\label{eq_Sylvester_P1}
(\widetilde J-\widetilde R)\widetilde Q P+\widetilde B L= P S.
\end{equation} Note that equation (\ref{eq_Sylvester_P}) is satisfied by
\begin{equation}\label{eq_P} P=\widetilde Q^{-1}=\Pi^*Q\Pi. \end{equation}
Since we assume that $\sigma(S)\cap\sigma((\widetilde J-\widetilde R)\widetilde Q)=\emptyset$, then $P$ as in (\ref{eq_P}) is the unique solution of the Sylvester equation (\ref{eq_Sylvester_P1}) (see also \cite{astolfi-TAC2010}).
\fin\end{rem}
\begin{rem}\n
Let systems $\Sigma_\Pi$ as in (\ref{model_pH_Pi}) and $\Sigma_V$ as in (\ref{model_pH_WV}) be two reduced order port Hamiltonian models that match the moments of (\ref{pH_system}), respectively. Then, by Lemma \ref{lemma_V_and_Pi}, they are equivalent\footnote{Two minimal systems described by state-space equations are called equivalent if they have the same transfer functions.}, i.e., there exists an invertible matrix $T$ such that $\Pi T=V$ and $J_r=T^*\widetilde J T$, $R_r=T^*\widetilde R T$, $Q_r=T^{-*}\widetilde Q T^{-1}$ and $B_r=T^*\widetilde B$, hence $\Sigma_V=\Sigma_{\Pi T}$ and furthermore, $\Sigma_V$ is a member of the class of models $\Sigma_G$.
\fin\end{rem}
Note that the result in Proposition \ref{prop_red_pH_Pi} shows that there exists a port Hamiltonian model that matches the moments of a given port Hamiltonian system by direct computation of the matrix $\Pi$. However this can be avoided by showing that the model $\Sigma_{\Pi}$ is a member of the class of reduced order models $\Sigma_G$ that match the moments of (\ref{pH_system}) at $\sigma(S)$, for a particular instance of the parameter $G$. Similarly, we also show that the model $\Sigma_\Upsilon$ is a member of the class of reduced order models $\Sigma_H$ that match the moments of (\ref{pH_system}) at $\sigma(\mathcal{Q})$, for a particular selection of $H$.
\begin{thm}\label{prop_pH_inc_G} Let Assumption \ref{ass_Q_S_Q} hold. Then the following statements hold.
\begin{enumerate}
\item Let $\Sigma_G$, as in (\ref{model_gen_G}), be a reduced order model of the port Hamiltonian system (\ref{pH_system}), matching the moments at $\sigma(S)$. Then $\Sigma_G$ is equivalent to a port Hamiltonian system $\Sigma_\Pi$, as in (\ref{model_pH_Pi}) and (\ref{model_pH_Pi_paramters}), i.e., $\Sigma_G$ preserves the port Hamiltonian structure of the system (\ref{pH_system}), if and only if $G=(\Pi^*Q\Pi)^{-1}\Pi^*QB$.

\item Let $\Sigma_H$, as in (\ref{model_gen_H}), be a reduced order model of the port Hamiltonian system (\ref{pH_system}), matching the moments at $\sigma({\mathcal Q})$. Then $\Sigma_H$ is equivalent to the port Hamiltonian system $\Sigma_\Upsilon$, as in (\ref{model_pH_Pi}) and (\ref{model_pH_Y_paramters}), i.e., $\Sigma_G$ preserves the port Hamiltonian structure of the system (\ref{pH_system}), if and only if $H=B^*\Upsilon^*(\Upsilon Q^{-1}\Upsilon^*)^{-1}$.\fin
\end{enumerate}
\end{thm}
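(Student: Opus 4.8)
The plan is to prove both statements by the same strategy, treating statement (1) in detail and noting that statement (2) is its exact dual. The key observation is that Proposition~\ref{prop_matching_happens}(1) already tells us that for \emph{any} $\nu$-th order model of (\ref{pH_system}) at $\sigma(S)$ with transfer function $\widehat K(s)$ there is a \emph{unique} $G$ with $\widehat K(s)=B^*Q\Pi(sI-S+GL)^{-1}G$. So the family $\Sigma_G$ parameterizes all reduced order models matching the moments at $\sigma(S)$ by their transfer function, and two members are equivalent iff they share the same transfer function, hence iff they have the same $G$.

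First I would establish the ``if'' direction. Suppose $G=(\Pi^*Q\Pi)^{-1}\Pi^*QB=\widetilde Q\,\widetilde B$, using the notation of (\ref{model_pH_Pi_paramters}). I want to show $\Sigma_G$ is equivalent to $\Sigma_\Pi$. By Proposition~\ref{prop_red_pH_Pi}(1), $\Sigma_\Pi$ is a port Hamiltonian reduced order model matching the moments of (\ref{pH_system}) at $\sigma(S)$; let $K_\Pi(s)$ be its transfer function. From the Remark following Proposition~\ref{prop_red_pH_Pi}, with $P=\Pi^*Q\Pi=\widetilde Q^{-1}$ the unique solution of the Sylvester equation (\ref{eq_Sylvester_P1}), the computation in the proof of Proposition~\ref{prop_red_pH_Pi}(1) gives
\begin{equation*}
K_\Pi(s)=B^*Q\Pi\bigl(sI-S+(\Pi^*Q\Pi)^{-1}QBL\bigr)^{-1}(\Pi^*Q\Pi)^{-1}QB.
\end{equation*}
But this is precisely $B^*Q\Pi(sI-S+GL)^{-1}G$ for the stated value of $G$ (note $\widetilde B^*\widetilde Q=B^*Q\Pi(\Pi^*Q\Pi)^{-1}$, and $(\Pi^*Q\Pi)^{-1}QB=(\Pi^*Q\Pi)^{-1}\Pi^*QB$ fails in general, so I must be careful here — actually one checks $\widetilde B L=\Pi^*QBL$ enters the Sylvester equation, and the output coefficient is $\widetilde B^*\widetilde Q P=B^*Q\Pi$, so after the change of coordinates by $P$ the realization becomes exactly $(S-GL,G,B^*Q\Pi)$ with $G=P^{-1}\widetilde B=\widetilde Q\widetilde B=(\Pi^*Q\Pi)^{-1}\Pi^*QB$). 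Hence $\Sigma_G$ and $\Sigma_\Pi$ have the same transfer function, so by Proposition~\ref{prop_matching_happens}(1) they are equivalent, and therefore $\Sigma_G$ preserves the port Hamiltonian structure.

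For the ``only if'' direction, suppose $\Sigma_G$ is equivalent to \emph{some} port Hamiltonian system of the form (\ref{model_pH_Pi})--(\ref{model_pH_Pi_paramters}). Since that system is a $\nu$-th order model matching the moments at $\sigma(S)$, by the ``if'' direction its unique associated parameter is $G'=(\Pi^*Q\Pi)^{-1}\Pi^*QB$; equivalence forces the transfer functions to coincide, and then the uniqueness clause of Proposition~\ref{prop_matching_happens}(1) gives $G=G'$. Statement (2) is handled identically: one uses Proposition~\ref{prop_matching_happens}(2) and Proposition~\ref{prop_red_pH_Pi}(2), transposing the roles of $\Pi,S,L,G$ with $\Upsilon,{\mathcal Q},{\mathcal R},H$, and the computation in the proof of Proposition~\ref{prop_red_pH_Pi}(2) already isolates $H=B^*\Upsilon^*(\Upsilon Q^{-1}\Upsilon^*)^{-1}$ as the output matrix, which after the appropriate change of coordinates becomes the $H$-parameter of $\Sigma_H$.

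The main obstacle I anticipate is bookkeeping the similarity transformation $P=\Pi^*Q\Pi$ correctly: one must verify that conjugating the realization $\bigl((\widetilde J-\widetilde R)\widetilde Q,\widetilde B,\widetilde B^*\widetilde Q\bigr)$ by $P$ sends the state matrix to $S-GL$ \emph{and} the input vector to exactly $G=(\Pi^*Q\Pi)^{-1}\Pi^*QB$ \emph{and} the output row vector to $B^*Q\Pi$, all simultaneously, using only the Sylvester identity $(J-R)Q\Pi=\Pi S-BL$. This is the content of the displayed computations in the proof of Proposition~\ref{prop_red_pH_Pi}, so the work is essentially already done; the theorem is really just the logical repackaging ``matching model $\Leftrightarrow$ unique $G$'' combined with ``port Hamiltonian matching model has this particular $G$.'' No genuinely new estimate or construction is needed beyond the invertibility of $\Pi^*Q\Pi$, which follows from Assumption~\ref{ass_Q_S_Q} ($Q$ invertible) together with $\rank\Pi=\nu$.
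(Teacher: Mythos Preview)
Your argument is correct, and the overall logical architecture---reduce both directions to the uniqueness clause of Proposition~\ref{prop_matching_happens}(1) together with the similarity $P=\Pi^*Q\Pi$ from the Remark after Proposition~\ref{prop_red_pH_Pi}---is sound. The parenthetical where you catch the dimensional mismatch in the displayed formula from the proof of Proposition~\ref{prop_red_pH_Pi} is right: the correct input vector after the change of coordinates is $G=P^{-1}\widetilde B=(\Pi^*Q\Pi)^{-1}\Pi^*QB$, and the verification $(\widetilde J-\widetilde R)=\Pi^*Q(\Pi S-BL)=\Pi^*Q\Pi S-\widetilde B L=P(S-GL)$ closes the loop cleanly.

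The paper's own proof takes a different, more constructive route. Rather than invoking uniqueness of $G$, it manipulates the Sylvester equation~(\ref{eq_Sylvester}) and its transpose directly: premultiplying by $\Pi^*Q$, postmultiplying the transpose by $Q\Pi$, and adding/subtracting yields explicit closed-form expressions
\[
\widetilde R=-\tfrac12\bigl[(\Pi^*Q\Pi S-GL)+(\Pi^*Q\Pi S-GL)^*\bigr],\qquad
\widetilde J=\tfrac12\bigl[(\Pi^*Q\Pi S-GL)-(\Pi^*Q\Pi S-GL)^*\bigr],
\]
from which the similarity $(\widetilde J-\widetilde R)\widetilde Q(\Pi^*Q\Pi)=(\Pi^*Q\Pi)(S-GL)$ and $\widetilde B=(\Pi^*Q\Pi)G$ drop out. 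For the converse it invokes Lemma~\ref{lema_pH_from_passive_H} in the Appendix. What the paper's approach buys is those explicit formulas for $\widetilde J,\widetilde R$ in terms of the reduced-model data $(S,L,G,\Pi^*Q\Pi)$, which are used again in the Markov-parameter section. Your approach is shorter and logically tighter, since it recognizes that the whole theorem is ``Proposition~\ref{prop_red_pH_Pi} plus uniqueness,'' but it does not produce those formulas as a byproduct.
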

\begin{proof} {\it Proof of Statement (1)}. First we prove the necessity.
Let $\Pi$ satisfy (\ref{eq_Sylvester}). Since $-J=J^*$ and $R=R^*$, $\Pi^*$ satisfies
\begin{equation}\label{eq_Sylvester_transp}
\Pi^*(-J-R)+L^*B^*=S^*\Pi^*.
\end{equation} Postmultiplying (\ref{eq_Sylvester_transp}) by $Q\Pi$ and adding it to (\ref{eq_Sylvester}) premultiplied by $\Pi^*Q$, yields
\begin{equation}\label{R_G}
\widetilde R=-\frac{1}{2}[(\Pi^*Q\Pi S-GL)+(\Pi^*Q\Pi S-GL)^*],
\end{equation} with $\widetilde R$ as in (\ref{model_pH_Pi_paramters}). Postmultiplying (\ref{eq_Sylvester_transp}) by $Q\Pi$ and subtracting it from (\ref{eq_Sylvester}) premultiplied by $\Pi^*Q$, yields
\begin{equation}\label{J_G}
\widetilde J=\frac{1}{2}[(\Pi^*Q\Pi S-GL)-(\Pi^*Q\Pi S-GL)^*],
\end{equation} with $\widetilde J$ as in (\ref{model_pH_Pi_paramters}). Hence $(\widetilde J-\widetilde R)\widetilde Q(\Pi^*Q\Pi)=(\Pi^*Q\Pi)(S-GL),\ \widetilde B=(\Pi^*Q\Pi) G,$ with $\Pi^*Q\Pi$ invertible, since $Q$ is assumed invertible, proving the claim.
\\
The sufficiency is a direct consequence of Lemma \ref{lema_pH_from_passive_H}, with $P=(\Pi^*Q\Pi)^{-1}$ and $C\Pi=B^*Q\Pi$.

{\it Proof of Statement (2)}. First we show the necessity.
Let $\Upsilon$ satisfy (\ref{eq_Sylvester_Y}). Since $-J=J^*$ and $R=R^*$, $\Upsilon^*$ satisfies
\begin{equation}\label{eq_Sylvester_Y_transp}
\Upsilon^*{\mathcal Q}^*=Q(-J-R)\Upsilon^*+QB{\mathcal R}^*.
\end{equation} Postmultiplying (\ref{eq_Sylvester_Y_transp}) by $Q^{-1}\Upsilon^*$ and adding it to (\ref{eq_Sylvester_Y}) premultiplied by $\Upsilon Q^{-1}$, yields
\begin{equation*}
\widetilde R=-\frac{1}{2}[({\mathcal Q}\Upsilon Q^{-1}\Upsilon^*-{\mathcal R}B^*\Upsilon^*)+({\mathcal Q}\Upsilon Q^{-1}\Upsilon^*-{\mathcal R}B^*\Upsilon^*)^*],
\end{equation*} with $\widetilde R$ as in (\ref{model_pH_Y_paramters}). Postmultiplying (\ref{eq_Sylvester_Y_transp}) by $Q^{-1}\Upsilon^*$ and subtracting it from (\ref{eq_Sylvester_Y}) premultiplied by $\Upsilon Q^{-1}$, yields
\begin{equation*}
\widetilde J=\frac{1}{2}[({\mathcal Q}\Upsilon Q^{-1}\Upsilon^*-{\mathcal R}B^*\Upsilon^*)-({\mathcal Q}\Upsilon Q^{-1}\Upsilon^*-{\mathcal R}B^*\Upsilon^*)^*],
\end{equation*} with $\widetilde J$ as in (\ref{model_pH_Y_paramters}). Hence
\begin{equation*}
(\widetilde J-\widetilde R)\widetilde Q={\mathcal Q}-{\mathcal R}H,\ \widetilde B=\Upsilon B,
\end{equation*} with $\widetilde Q=(\Upsilon Q^{-1}\Upsilon^*)^{-1}$ invertible, since $Q$ is assumed invertible, proving the claim.
\\
The sufficiency is a direct consequence of Lemma \ref{lema_pH_from_passive_H}, with $P=\Upsilon Q^{-1}\Upsilon^*$ and $P^{-1}B\Upsilon=H^*$.
\end{proof}
If $S$ and $L$ are fixed, then $\Sigma_\Pi$ is the unique model from the class $\Sigma_G$ that matches the moments of (\ref{pH_system}) at $\sigma(S)$ and preserves the port Hamiltonian structure. Assume now that $S$ is fixed and let $L=[l_1\ l_2\ \dots\ l_\nu]$, $l_i\in\mathbb{C}$, $i=1,...,\nu$, be such that the pair $(L,S)$ is observable. Then the solution of the Sylvester equation (\ref{eq_Sylvester}) is given by a matrix $\Pi(L)$, yielding a class of reduced order port Hamiltonian models $\Sigma_{\Pi(L)}$ defined by (\ref{model_pH_Pi}) with $\widetilde J(L),\ \widetilde R(L), \widetilde Q(L), \widetilde B(L)$ as in (\ref{model_pH_Pi_paramters}). Note that the input output behaviour is not affected by the choice of $l_1,\ ...,\ l_\nu$, i.e., all models parameterized in $L$ have the same transfer function. However, since the port Hamiltonian structure is a state-space property, the parameters $l_i$, $i=1,...,\nu$ can be used to enforce additional structure, e.g. the Hamiltonian defined by $\widetilde Q$, or the dissipation $\widetilde R$, have a desired form.  Similarly, let ${\mathcal R}=[r_1\ r_2\ \dots\ r_\nu]^*$, $r_i\in\mathbb{C}$, $i=1,...,\nu$, be such that $({\mathcal Q},{\mathcal R})$ is controllable. Then the solution of the Sylvester equation (\ref{eq_Sylvester_Y}) is given by a matrix $\Upsilon({\mathcal R})$, yielding a family of reduced order port Hamiltonian models $\Sigma_{\Upsilon({\mathcal R})}$ defined by equation (\ref{model_pH_Pi}) with $\widetilde J({\mathcal R}), \widetilde R({\mathcal R}), \widetilde Q({\mathcal R}), \widetilde B({\mathcal R})$ as in (\ref{model_pH_Y_paramters}). All models parameterized in ${\mathcal R}$ have the same transfer function and the parameters $r_i$, $i=1,...,\nu$ can be selected such that the models meet additional constraints.

In the MIMO case, $l_i\in\mathbb{C}^m$, $i=1,...,\nu$ can be chosen such that the right tangential interpolation conditions (\ref{eq_tangent}) are satisfied. Similarly, $r_i\in\mathbb{C}^{1\times p}$, $i=1,...,\nu$ can be chosen such that the left tangential interpolation conditions (\ref{eq_tangent_dual}) are satisfied.
\begin{cor}\label{cor_PH_red_tang}
Consider a MIMO, port Hamiltonian system (\ref{pH_system}), with the input $u(t)\in\mathbb{R}^m$ and the output $y(t)\in\mathbb{R}^m$, i.e. $B\in\mathbb{C}^{n\times m}$ and $C\in\mathbb{C}^{m\times n}$. Let $L=[l_1\ ...\ l_\nu]\in\mathbb{C}^{m\times \nu}$ and ${\mathcal R}=[r_1^*\ ...\ r_\nu^*]^*\in\mathbb{C}^{\nu\times m}$. Then the following statements hold.
\begin{enumerate}
\item The MIMO system $\Sigma_\Pi$ described by equations (\ref{model_pH_Pi}) and (\ref{model_pH_Pi_paramters}), with input $u(t)$ and output $\psi(t)\in\mathbb{R}^m$, satisfies the right interpolation conditions (\ref{eq_tangent}).

\item The system $\Sigma_\Upsilon$ described by equations (\ref{model_pH_Pi}) and (\ref{model_pH_Y_paramters}), with input $u(t)$ and output $\psi(t)\in\mathbb{R}^m$, satisfies the left interpolation conditions (\ref{eq_tangent_dual}).\fin
\end{enumerate}
\end{cor}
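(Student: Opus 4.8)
The plan is to deduce both statements from the MIMO (tangential) reading of the results already established for port Hamiltonian reduced order models, rather than redoing any computation from scratch. Recall from Section \ref{sect_MIMO_prel} that, with $L=[l_1\ \dots\ l_\nu]\in\mathbb{C}^{m\times\nu}$ and $\Pi\in\mathbb{C}^{n\times\nu}$ the unique solution of (\ref{eq_Sylvester}), the right tangential moments $\eta(s_i)=K(s_i)l_i$ at $\{s_1,\dots,s_\nu\}=\sigma(S)$ are in one-to-one relation with $B^*Q\Pi$; equivalently, a $\nu$-th order model with transfer function $\widehat K$ satisfies the right tangential interpolation conditions (\ref{eq_tangent}) if and only if its realization obeys matching conditions of the form (\ref{eq_Sylvester_P})--(\ref{eq_Sylvester_P1}) with $B^*Q\Pi$ in place of the SISO moment vector. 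Dually, with ${\mathcal R}=[r_1^*\ \dots\ r_\nu^*]^*\in\mathbb{C}^{\nu\times m}$ and $\Upsilon\in\mathbb{C}^{\nu\times n}$ the unique solution of (\ref{eq_Sylvester_Y}), the left tangential moments $r_iK(s_i)$ at $\sigma({\mathcal Q})$ are in one-to-one relation with $\Upsilon B$, and (\ref{eq_tangent_dual}) is characterized analogously.

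For statement (1), I would invoke Theorem \ref{prop_pH_inc_G}(1): its proof uses only $J^*=-J$, $R^*=R$, the invertibility of $Q$, and the Sylvester equation (\ref{eq_Sylvester}), all of which hold verbatim in the MIMO case. It yields $(\widetilde J-\widetilde R)\widetilde Q\,(\Pi^*Q\Pi)=(\Pi^*Q\Pi)(S-GL)$ and $\widetilde B=(\Pi^*Q\Pi)G$ for $G=(\Pi^*Q\Pi)^{-1}\Pi^*QB$, and since $\widetilde B^*\widetilde Q\,(\Pi^*Q\Pi)=B^*Q\Pi$ as well, the matrix $P=\Pi^*Q\Pi$ (invertible because $Q$ is invertible and $\rank\Pi=\nu$) is a state-space equivalence between $\Sigma_\Pi$ in (\ref{model_pH_Pi})--(\ref{model_pH_Pi_paramters}) and the member of the family $\Sigma_G$ in (\ref{model_gen_G}) corresponding to that $G$. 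By Theorem \ref{thm_redmod_CPi}(1) together with the MIMO discussion in Section \ref{sect_MIMO_prel}, every admissible member of $\Sigma_G$ satisfies the right tangential interpolation conditions (\ref{eq_tangent}); since equivalence preserves the transfer function and (\ref{eq_tangent}) depends only on the transfer function, $\Sigma_\Pi$ satisfies (\ref{eq_tangent}). Statement (2) is proved identically, using Theorem \ref{prop_pH_inc_G}(2) to exhibit $\Sigma_\Upsilon$ as equivalent (via $P=\Upsilon Q^{-1}\Upsilon^*$) to the member of $\Sigma_H$ in (\ref{model_gen_H}) with $H=B^*\Upsilon^*(\Upsilon Q^{-1}\Upsilon^*)^{-1}$, which satisfies the left tangential conditions (\ref{eq_tangent_dual}) for every admissible $H$.

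There is no genuine obstacle here, only bookkeeping: one must transport the nonresonance hypotheses of Assumption \ref{ass_Q_S_Q} through the similarity $P$, i.e. check $\sigma(S)\cap\sigma((\widetilde J-\widetilde R)\widetilde Q)=\emptyset$ for part (1) and $\sigma({\mathcal Q})\cap\sigma((\widetilde J-\widetilde R)\widetilde Q)=\emptyset$ for part (2), so that the reduced transfer functions are well-defined at the interpolation points; both hold because $(\widetilde J-\widetilde R)\widetilde Q$ is similar to $S-GL$ (resp. ${\mathcal Q}-{\mathcal R}H$). If instead one prefers a self-contained argument, statement (1) can be obtained by repeating the computation in the proof of Proposition \ref{prop_red_pH_Pi}(1) and right-multiplying the resulting identity $K(s_i)=K_\Pi(s_i)$ by $l_i$, reading $B^*Q\Pi$ column-wise against $L$; this reproduces exactly (\ref{eq_tangent}), and the dual computation gives (\ref{eq_tangent_dual}).
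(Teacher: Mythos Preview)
Your proposal is correct and follows the same route the paper implicitly relies on: the corollary is stated without proof in the paper, as an immediate consequence of Theorem~\ref{prop_pH_inc_G} together with the MIMO tangential-interpolation discussion in Section~\ref{sect_MIMO_prel}, which is precisely the deduction you spell out. Your alternative self-contained argument via Proposition~\ref{prop_red_pH_Pi} is also fine but not needed.
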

Theorem \ref{prop_pH_inc_G} offers a way to find a reduced order port Hamiltonian model, from a reduced order model that achieves matching of moments of the given (port Hamiltonian) system, by selecting the parameter $G$. Let (\ref{model_gen_G}) be a reduced order model and let $P$ be such that $S^*P+PS\leq \Pi^*QBL+L^*B^*Q\Pi$. Then, according to Theorem \ref{thm4_[4]}, there exists $G$ such that the model is passive, i.e., $PG=\Pi^*QB$. Selecting $P=\Pi^*Q\Pi$ yields the parameter $G$ which identifies the port Hamiltonian reduced order model that achieves moment matching. Based on Lemma \ref{lema_pH_from_passive_H}, similar arguments hold for the case of the system $\Sigma_\Upsilon$, with $P=\Upsilon Q^{-1}\Upsilon^*$.

Lemma \ref{lemma_V_and_Pi} implies that the result from Theorem \ref{prop_pH_inc_G} is equivalent to the result from Theorem \ref{thm_red_PH_WV}, i.e., the class of reduced order models obtained by Krylov projection and the class of reduced order models obtained by time-domain moment matching are equivalent. In detail, let $\Sigma_G$ and $\Sigma_V$ be two reduced order models of (\ref{pH_system}). Then selecting $T=\Pi^*Q\Pi$ yields $\Pi^*Q\Pi(S-GL)=\widetilde W^*(J-R)Q\widetilde V \Pi^*Q\Pi$, $\Pi^*Q\Pi G=\widetilde W^* B$ and $B^*Q\Pi=B^*Q\widetilde V\Pi^*Q\Pi$, which shows that one port Hamiltonian model can be obtained from the other via a coordinate transformation. Note that from an input-output point of view, both models exhibit the same behaviour. Similar arguments hold for the case of the system $\Sigma_\Upsilon$, with $T=\Upsilon Q^{-1}\Upsilon^*$. Hence, from a computational point of view, Theorem \ref{prop_pH_inc_G} offers a way to compute a reduced order model $\Sigma_\Pi$ (or $\Sigma_\Upsilon$) that achieves moment matching and preserves the port Hamiltonian structure.

\begin{alg}\label{alg_PHred_finite}(Computation of a port Hamiltonian reduced order model that matches a prescribed number of moments of a given port Hamiltonian linear system, at a set of finite interpolation points.) \\
\n
1. Select $S$ and $L$ such that the pair $(L,S)$ is observable. \\
2. Use any efficient algorithm (e.g. Iterative Rational Krylov) to compute $V$ depending on the eigenvalues of $S$. \\
3. According to Lemma \ref{lemma_V_and_Pi}, set $\Pi=V$. \\
4. Compute the class of reduced order models $\Sigma_G$ as in (\ref{model_gen_G}). \\
5. Let $G=(\Pi^*Q\Pi)^{-1}\Pi^*QB$ and compute the port Hamiltonian model $\Sigma_\Pi$ as in (\ref{model_pH_Pi}) and (\ref{model_pH_Pi_paramters}).\fin
\end{alg}
The outcome of Algorithm \ref{alg_PHred_finite} is a port Hamiltonian model that matches the moments of (\ref{pH_system}) at $\sigma(S)$, parameterized in $S$ and $L$. Hence, following arguments from Section \ref{sect_krylov}, a suitable selection of the interpolation points, at step 1, such that the model approximates (\ref{pH_system}) in the $H_2$ norm more accurately, is given by the matrix $S$ such that $\sigma(S)=\{-\lambda_1,...,-\lambda_\nu\}$, where $\lambda_i\in\sigma((J-R)Q)$.

\subsection{Markov parameter matching with preservation of port Hamiltonian structure}\label{sect_pH_MarkovM}

 In this section we solve another particular instance of \ref{prob_redmod_prose}, for the case of matching a set of prescribed Markov parameters.

\begin{prob}\label{prob_redmod_Markov}\n
Consider the linear, port Hamiltonian, SISO system (\ref{pH_system}) and the observable pair $(L,S)$, where $L\in\mathbb{C}^{1\times \nu}$ and $S\in\mathbb{C}^{\nu\times\nu}$ such that Assumption \ref{ass_eigenvalues_Markov} holds. Find $F\in\mathbb{C}^{\nu\times\nu}$ and/or $G\in\mathbb{C}^\nu$, such that the following properties hold.
\begin{description}
\item[(m1)] The family of systems $\Sigma_{\mathbf\Pi}$ parameterized in $F$ and/or $G$ and $H$, described by equations (\ref{red_mod_FGH}), with $\mathbf\Pi\in\{\Pi,\bar\Pi\}$, where $\Pi$ is the unique solution of (\ref{eq_matrix_Pi_Markov}) and $\bar\Pi$ is the unique solution of (\ref{eq_matrix_Pi_Markov_bar}) match the moments of the transfer function $\widetilde K(\tau)$ of (\ref{pH_system}) at $\sigma(S)$, i.e., conditions (\ref{cond_FGH}) or (\ref{cond_FGH_1}) are satisfied.

\item[(m2)] There exists $G$ and $H$ such that the family of systems $\Sigma_{\mathbf\Pi}$, described by equations (\ref{red_mod_FGH}), preserve the port Hamiltonian structure.


\item[(m3)] The computation of the family of systems $\Sigma_{\mathbf\Pi}$ described by equations (\ref{red_mod_FGH}) avoids the explicit solution of the Markov parameters. \fin
\end{description}
\end{prob}

Based on the solution of subproblem {\bf (m1)} provided in this section, we provide solutions to subproblem {\bf (m2)}, suitable to the scope of this paper. We give some insight into subproblem {\bf (m3)} provided by Proposition \ref{prop_equiv_Markov} which allows for efficient computation of the families of reduced order models. Note that a similar problem can be formulated in terms of the results of Section \ref{sect_Markov_dual}, hence it is omitted.

In this section we focus on Problem \ref{prob_redmod_Markov}, i.e., find a reduced order model that matches a prescribed number of Markov parameters and preserves the port Hamiltonian structure. Consider the port Hamiltonian system (\ref{pH_system}) with the transfer function $K(s)=B^*Q(sI-(J-R)Q)^{-1}B$. Suppose Assumption \ref{ass_eigenvalues_Markov} holds.  The moments of $\widetilde K(\tau)=K(1/\tau)$ at $\tau=\tau^*$ are in a one-to-one relation with $B^*Q\Pi S$, where $\Pi$ is the unique solution of the Sylvester equation
\begin{equation}\label{eq_Sylv_tau_pH}
(J-R)Q\Pi S+BL=\Pi.
\end{equation} In addition, let $\bar\Pi$ be the unique solution of the Sylvester equation
\begin{equation}\label{eq_Sylv_tau_pH_bar}
(J-R)Q\bar\Pi S+BLS=\bar\Pi.
\end{equation} The moments of $\widetilde K(\tau)=K(1/\tau)$ at $\tau=\tau^*$ are in a one-to-one relation with $B^*Q\bar\Pi$. The first $\nu$ Markov parameters of (\ref{pH_system}) are the moments of $\widetilde K(\tau)$ for $\tau^*=0$. Assume there exists an invertible matrix $P$ such that a reduced order model described by equations (\ref{red_mod_FGH}) exists and the relations (\ref{cond_FGH}) are satisfied. Furthermore, assume there exists an invertible matrix $\bar P$ such that conditions (\ref{cond_FGH_1}) are satisfied.

At the same time, we compute the class of port Hamiltonian reduced order models that achieve moment matching based on Proposition \ref{prop_mom_Markov_Y}. The moments of $\widetilde K(\tau)=K(1/\tau)$ at $\tau=\tau^*$ are in a one-to-one relation with ${\mathcal Q}\Upsilon B$, where $\Upsilon$ is the solution of the Sylvester equation
\begin{equation}\label{eq_Sylv_tau_Y_pH}
{\mathcal Q}\Upsilon (J-R)Q+{\mathcal R}B^*Q=\Upsilon.
\end{equation} The first $\nu$ Markov parameters of (\ref{pH_system}) are the moments of $\widetilde K(\tau)$ for $\tau^*=0$.
\begin{prop}\label{prop_red_pH_Markov_Pi}
Let $K(s)$ be the transfer function of the system (\ref{pH_system}). Let $(L,S)$ be an observable pair and let $(\mathcal{Q},\mathcal{R})$ be a controllable pair. Consider the port Hamiltonian system
\begin{equation}\label{model_pH_Markov_Pi} \begin{split}
&\dot \xi= (\widetilde J-\widetilde R)\widetilde Q\xi+\widetilde Bu,\\& \psi=\widetilde B^*\widetilde Q\xi,
\end{split}\end{equation} with $\xi(t)\in\mathbb{R}^\nu$. The following statements hold.
\begin{enumerate}
\item Let
\begin{equation}\label{model_pH_Markov_Pi_paramters}\begin{split}
&\widetilde J=\mathbf{\Pi}^*QJQ\mathbf{\Pi},\ \widetilde R=\mathbf{\Pi}^*QRQ\mathbf{\Pi},\\& \widetilde Q=(\mathbf{\Pi}^*Q\mathbf{\Pi})^{-1},\ \widetilde B=\mathbf{\Pi}^*QB,
\end{split}\end{equation} with $\mathbf{\Pi}=\Pi$, or $\mathbf{\Pi}=\bar\Pi$, where $\Pi$ is the unique solution of equation (\ref{eq_Sylv_tau_pH}) and $\bar\Pi$ is the unique solution of equation (\ref{eq_Sylv_tau_pH_bar}). Let $\Sigma_{\mathbf \Pi}$ denote a port Hamiltonian model described by equations (\ref{model_pH_Markov_Pi}) and (\ref{model_pH_Markov_Pi_paramters}). If $\lambda\mu\ne 1$, for any $\lambda\in\sigma(S)$ and any $\mu\in\sigma((\widetilde J-\widetilde R)\widetilde Q)$, then the
system $\Sigma_{\mathbf \Pi}$ is a port Hamiltonian, reduced order model that matches the moments of $\widetilde K(\tau)=K(1/s)$ at $\sigma(S)$.

\item  Let
\begin{equation}\label{model_pH_Markov_Y_parameters}\begin{split}
&\widetilde J=\Upsilon^*J\Upsilon,\ \widetilde R=\Upsilon^*R\Upsilon,\\& \widetilde Q=(\Upsilon^*Q^{-1}\Upsilon)^{-1},\ \widetilde B=\Upsilon^*B,
\end{split}\end{equation} with $\Upsilon$ the unique solution of equation (\ref{eq_Sylv_tau_Y_pH}). Let $\Sigma_{\Upsilon}$ denote a port Hamiltonian model described by equations (\ref{model_pH_Markov_Pi}) and (\ref{model_pH_Markov_Y_parameters}). If $\lambda\mu\ne 1$, for any $\lambda\in\sigma(\mathcal Q)$ and any $\mu\in\sigma((\widetilde J-\widetilde R)\widetilde Q)$, then the system $\Sigma_\Upsilon$ is a port Hamiltonian, reduced order model that matches the moments of $\widetilde K(\tau)=K(1/s)$ at $\sigma({\mathcal Q})$. \fin
\end{enumerate}
\end{prop}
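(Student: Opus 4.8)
The plan is to follow the proof of Proposition \ref{prop_red_pH_Pi} essentially verbatim, only now in the variable $\tau=1/s$, and to organise it in two steps: first check that $\Sigma_{\mathbf{\Pi}}$ (resp.\ $\Sigma_\Upsilon$) is a genuine $\nu$-th order port Hamiltonian system, then verify the Markov-type matching conditions of Section \ref{sect_MarkovM} by substituting the appropriate Sylvester equation into the definitions. For the first step, since $J^*=-J$, $R^*=R$ and $Q^*=Q$, the definitions (\ref{model_pH_Markov_Pi_paramters}) give at once $\widetilde J^*=-\widetilde J$, $\widetilde R^*=\widetilde R$ and $\widetilde Q^*=\widetilde Q$, and similarly for (\ref{model_pH_Markov_Y_parameters}); moreover $\widetilde Q$ is well defined because $Q$ is invertible and $\mathbf{\Pi}$ (resp.\ $\Upsilon$), being the unique solution of its Sylvester equation under Assumption \ref{ass_eigenvalues_Markov}, has full column rank $\nu$, so that $\mathbf{\Pi}^*Q\mathbf{\Pi}$ (resp.\ $\Upsilon Q^{-1}\Upsilon^*$) is invertible. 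Hence $(\widetilde J,\widetilde R,\widetilde Q,\widetilde B)$ defines a port Hamiltonian realization, which is exactly (\ref{model_pH_Markov_Pi}).

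For the matching part of statement (1), set $F=(\widetilde J-\widetilde R)\widetilde Q$, $G=\widetilde B$, $H=\widetilde B^*\widetilde Q$, and $P=\mathbf{\Pi}^*Q\mathbf{\Pi}=\widetilde Q^{-1}$, and observe that $\widetilde J-\widetilde R=\mathbf{\Pi}^*Q(J-R)Q\mathbf{\Pi}=FP$ and $C=B^*Q$. If $\mathbf{\Pi}=\Pi$, premultiplying (\ref{eq_Sylv_tau_pH}) by $\Pi^*Q$ yields $FPS+GL=P$, while $HPS=(B^*Q\Pi)(\Pi^*Q\Pi)^{-1}(\Pi^*Q\Pi)S=B^*Q\Pi S=C\Pi S$, i.e.\ the matching conditions (\ref{cond_FGH}). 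If $\mathbf{\Pi}=\bar\Pi$, premultiplying (\ref{eq_Sylv_tau_pH_bar}) by $\bar\Pi^*Q$ yields $F\bar PS+GLS=\bar P$ with $\bar P=\bar\Pi^*Q\bar\Pi$, and $H\bar P=B^*Q\bar\Pi=C\bar\Pi$, i.e.\ (\ref{cond_FGH_1}). The hypothesis $\lambda\mu\ne 1$ for $\lambda\in\sigma((\widetilde J-\widetilde R)\widetilde Q)=\sigma(F)$ and $\mu\in\sigma(S)$ guarantees that $\widetilde K_{\mathbf{\Pi}}(\tau)=K_{\mathbf{\Pi}}(1/\tau)$ is well defined at $\sigma(S)$, so that $\Sigma_{\mathbf{\Pi}}$, seen as a model of the form (\ref{red_mod_FGH}), matches the first $\nu$ moments of $\widetilde K$ at $\sigma(S)$, and in particular the first $\nu$ Markov parameters when $\sigma(S)=\{0\}$.

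For statement (2), transpose (\ref{eq_Sylv_tau_Y_pH}) (using $J^*=-J$, $R^*=R$) to obtain the equation satisfied by $\Upsilon^*$, and postmultiply (\ref{eq_Sylv_tau_Y_pH}) itself by $Q^{-1}\Upsilon^*$; combining the two exactly as in the proof of Proposition \ref{prop_red_pH_Pi}(2) isolates the symmetric part $\widetilde R=\Upsilon R\Upsilon^*$ and the skew part $\widetilde J=\Upsilon J\Upsilon^*$ and gives ${\mathcal Q}(\widetilde J-\widetilde R)+{\mathcal R}\widetilde B^*=\widetilde Q^{-1}$, with $\widetilde Q^{-1}=\Upsilon Q^{-1}\Upsilon^*$ and $\widetilde B^*=B^*\Upsilon^*$. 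Postmultiplying by $\widetilde Q$ then yields ${\mathcal Q}F+{\mathcal R}H=I$ for $F=(\widetilde J-\widetilde R)\widetilde Q$ and $H=\widetilde B^*\widetilde Q$; together with $G:=\widetilde B=\Upsilon B$, which satisfies ${\mathcal Q}G={\mathcal Q}\Upsilon B$, these are precisely the matching conditions (\ref{markov_match_cond_1}) for the family (\ref{red_FGH_left}) with $E=I$ and change-of-coordinates matrix equal to the identity. By Theorem \ref{thm_Markov_time_Y}, $\Sigma_\Upsilon$ then matches the moments of $\widetilde K$ at $\sigma({\mathcal Q})$, the hypothesis $\lambda\mu\ne 1$ for $\lambda\in\sigma((\widetilde J-\widetilde R)\widetilde Q)$ and $\mu\in\sigma({\mathcal Q})$ again ensuring $\widetilde K_\Upsilon(\tau)=K_\Upsilon(1/\tau)$ is well defined there.

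None of the steps is conceptually difficult; the only place that requires care — and hence the main ``obstacle'' — is the bookkeeping: tracking the extra factor $S$ in (\ref{eq_Sylv_tau_pH_bar}) so that the $LS$-term of (\ref{cond_FGH_1}) appears correctly, and, in statement (2), handling the transposes when splitting ${\mathcal Q}\Upsilon(J-R)Q$ into its $\widetilde R$ and $\widetilde J$ contributions, as well as making sure the adjoints in (\ref{model_pH_Markov_Y_parameters}) are placed so that the dimensions are consistent with $\Upsilon\in\mathbb{C}^{\nu\times n}$ (e.g.\ $\widetilde B=\Upsilon B$). One should also invoke explicitly the invertibility of $Q$ and the full column rank of $\Pi$, $\bar\Pi$ and $\Upsilon$, on which the existence of $\widetilde Q$ rests, and note that the relation between these Sylvester solutions and Krylov projectors established in Proposition \ref{prop_equiv_Markov} makes $\Sigma_{\mathbf{\Pi}}$ and $\Sigma_\Upsilon$ computable without forming the Markov parameters explicitly.
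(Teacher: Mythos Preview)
Your proposal is correct and follows essentially the same route as the paper: for statement (1) you set $P=\mathbf{\Pi}^*Q\mathbf{\Pi}=\widetilde Q^{-1}$, premultiply the Sylvester equation by $\mathbf{\Pi}^*Q$, and read off the matching conditions (\ref{cond_FGH}) (resp.\ (\ref{cond_FGH_1}) for $\bar\Pi$), which is exactly the paper's argument. For statement (2) the paper merely refers back to Proposition~\ref{prop_red_pH_Pi} (whose proof evaluates $K_\Upsilon$ at the interpolation points), whereas you instead postmultiply (\ref{eq_Sylv_tau_Y_pH}) by $Q^{-1}\Upsilon^*$ and verify (\ref{markov_match_cond_1}) with $P=E=I$; this is a harmless variant within the same framework, and your remark that the adjoints in (\ref{model_pH_Markov_Y_parameters}) must be read as $\widetilde J=\Upsilon J\Upsilon^*$, $\widetilde B=\Upsilon B$, etc., for the dimensions to be consistent is well taken.
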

\begin{proof}
{\it Proof of Statement (1)}. Let $\mathbf{\Pi}=\Pi$ be the unique solution of equation (\ref{eq_matrix_Pi_Markov}). Let $P=\widetilde Q^{-1}=\Pi^*Q\Pi$, then $(\widetilde J-\widetilde R)\widetilde Q PS+\widetilde BL-P=\Pi^*Q(J-R)Q\Pi(\Pi^*Q\Pi)^{-1}\Pi^*Q\Pi S+\Pi^*QBL-\Pi^*Q\Pi=\Pi^*Q[(J-R)Q\Pi S+BL-\Pi]$. By (\ref{eq_matrix_Pi_Markov}) we have that $(J-R)Q\Pi S+BL-\Pi=0$, hence $(\widetilde J-\widetilde R)\widetilde Q PS+\widetilde BL=P$. Furthermore $\widetilde B^*\widetilde Q P=B^*Q\Pi(\Pi^*Q\Pi)^{-1}\Pi^*Q\Pi=B^*Q\Pi$. Then, system (\ref{model_pH_Markov_Pi}) satisfies conditions (\ref{cond_FGH}) and the claim is proven. The proof is similar in the case $\mathbf{\Pi}=\bar\Pi$.
\\
The proof of the second statement follows similar arguments as the proof of Proposition \ref{prop_red_pH_Pi}, hence it is omitted.
\end{proof}
Proposition \ref{prop_red_pH_Markov_Pi} shows that there exists a port Hamiltonian model that matches the moments of a given port Hamiltonian system by direct computation of the matrix $\mathbf \Pi$. This computation is avoided by showing that the model $\Sigma_{\mathbf\Pi}$ is a member of the class of reduced order models (\ref{red_mod_FGH}) that match the moments of $\widetilde K(\tau)=K(1/\tau)$ at $\sigma(S)$, for a particular instance of the parameter $G$ and of the output $H$. Similarly, we also show that the model $\Sigma_{\mathbf\Upsilon}$ is a member of the class of reduced order models (\ref{red_FGH_left}) that match the moments of $\widetilde K(\tau)=K(1/\tau)$ at $\sigma(\mathcal{Q})$, for a particular selection of $H$.
\begin{thm}\label{prop_Markov_pH_inc_G} The following statements hold.
\begin{enumerate}
\item Let system $\Sigma_{\mathbf \Pi}$, as in (\ref{red_mod_FGH}), be a reduced order model of system (\ref{pH_system}) that matches the moments of $\widetilde K(\tau)=K(1/s)$ at $\sigma(S)$, where $K(s)$ is the transfer function of (\ref{pH_system}). Then $\Sigma_{\mathbf\Pi}$ is equivalent to a port Hamiltonian system $\Sigma_{\mathbf\Pi}$, described by equations (\ref{model_pH_Markov_Pi}) and (\ref{model_pH_Markov_Pi_paramters}), if and only if $G={\mathbf\Pi}^*QB$ and $H=G^*({\mathbf \Pi}^*Q{\mathbf \Pi})^{-1}$.

\item Let system (\ref{red_FGH_left}) be a model that matches the moments of $\widetilde K(\tau)=K(1/s)$ at $\sigma(\mathcal Q)$, where $K(s)$ is the transfer function of (\ref{pH_system}). If $E=I$, then (\ref{red_FGH_left}) is equivalent to a port Hamiltonian system $\Sigma_\Upsilon$ described by equations (\ref{model_pH_Markov_Pi}) and (\ref{model_pH_Markov_Y_parameters}), if and only if $H=B^*\Upsilon^*\widetilde Q$.\fin
\end{enumerate}
\end{thm}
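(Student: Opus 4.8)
The plan is to follow, essentially along the lines of the proof of Theorem \ref{prop_pH_inc_G}, with the Sylvester equations (\ref{eq_Sylvester}) and (\ref{eq_Sylvester_Y}) replaced by the ``inverted'' ones (\ref{eq_Sylv_tau_pH}), (\ref{eq_Sylv_tau_pH_bar}) and (\ref{eq_Sylv_tau_Y_pH}), and the steady-state characterisation of Theorem \ref{thm_mom_steady} replaced by the algebraic matching conditions (\ref{cond_FGH})--(\ref{cond_FGH_1}) (resp. (\ref{markov_match_cond_1})). The underlying observation is that the port Hamiltonian model of Proposition \ref{prop_red_pH_Markov_Pi}, read as a realisation of the form (\ref{red_mod_FGH}), already has $G=\widetilde B=\mathbf\Pi^{*}QB$ and, since $\widetilde B^{*}=B^{*}Q\mathbf\Pi=G^{*}$ and $\widetilde Q=(\mathbf\Pi^{*}Q\mathbf\Pi)^{-1}$, output $H=\widetilde B^{*}\widetilde Q=G^{*}(\mathbf\Pi^{*}Q\mathbf\Pi)^{-1}$; so the content of Statement (1) is that these values of $G$ and $H$ single out the port Hamiltonian member of the family (\ref{red_mod_FGH}).

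For the necessity in Statement (1) I would proceed as in the proof of Theorem \ref{prop_pH_inc_G}(1). Take $\mathbf\Pi=\Pi$ solving (\ref{eq_Sylv_tau_pH}); using $J^{*}=-J$, $R^{*}=R$ and $Q^{*}=Q$, write down the conjugate transpose of (\ref{eq_Sylv_tau_pH}), pre-multiply (\ref{eq_Sylv_tau_pH}) by $\Pi^{*}Q$, post-multiply its transpose by $Q\Pi$, and add (resp. subtract) to obtain closed expressions for $\widetilde R=\Pi^{*}QRQ\Pi$ and $\widetilde J=\Pi^{*}QJQ\Pi$ in the spirit of (\ref{R_G})--(\ref{J_G}). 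From these one reads off that the triple $((\widetilde J-\widetilde R)\widetilde Q,\widetilde B,\widetilde B^{*}\widetilde Q)$ satisfies (\ref{cond_FGH}) with the invertible matrix $P=\Pi^{*}Q\Pi$ (invertibility of $Q$ is used here), i.e. it is a moment-matching model; conversely, if a moment-matching $\Sigma_{\mathbf\Pi}$ is equivalent to it, the coordinate change is pinned down by the uniqueness of the solution of the Sylvester equation appearing in (\ref{cond_FGH}), which holds under the hypothesis $\lambda\mu\ne1$ for $\lambda\in\sigma((\widetilde J-\widetilde R)\widetilde Q)$, $\mu\in\sigma(S)$, and this forces $G=\Pi^{*}QB$, $H=G^{*}(\Pi^{*}Q\Pi)^{-1}$. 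The case $\mathbf\Pi=\bar\Pi$ runs identically, starting from (\ref{eq_Sylv_tau_pH_bar}) and using the conditions (\ref{cond_FGH_1}).

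The sufficiency in Statement (1) is the converse reading of the same computation: by Proposition \ref{prop_red_pH_Markov_Pi}(1) the port Hamiltonian realisation with the prescribed $G$ and $H$ matches the moments of $\widetilde K$ at $\sigma(S)$, so, exactly as in the proof of Theorem \ref{prop_pH_inc_G}, the implication follows from Lemma \ref{lema_pH_from_passive_H} with $P=(\mathbf\Pi^{*}Q\mathbf\Pi)^{-1}$ and $C\mathbf\Pi=B^{*}Q\mathbf\Pi$. Statement (2) is the dual construction with $\Upsilon$ solving (\ref{eq_Sylv_tau_Y_pH}): take the conjugate transpose, pre-multiply by $\Upsilon Q^{-1}$, post-multiply by $Q^{-1}\Upsilon^{*}$, and add/subtract to get $\widetilde R=\Upsilon R\Upsilon^{*}$, $\widetilde J=\Upsilon J\Upsilon^{*}$ and $\widetilde B=\Upsilon B$; imposing $E=I$ and the matching relations (\ref{markov_match_cond_1}) then yields $H=\widetilde B^{*}\widetilde Q=B^{*}\Upsilon^{*}\widetilde Q$, with sufficiency again from Lemma \ref{lema_pH_from_passive_H} (now with $P=\Upsilon Q^{-1}\Upsilon^{*}$).

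I expect the main obstacle to be the degeneracy of $S$ (and of $\mathcal Q$) at the Markov interpolation point $\tau^{*}=0$: the equations (\ref{eq_Sylv_tau_pH})--(\ref{eq_Sylv_tau_Y_pH}) cannot be rearranged by inverting $S$ or $\mathcal Q$, so both the extraction of $\widetilde J,\widetilde R$ --- particularly for $\mathbf\Pi=\bar\Pi$, whose transposed identity carries an $S^{*}$ on the left and is no longer in the clean symmetric form that makes (\ref{R_G})--(\ref{J_G}) drop out --- and the uniqueness arguments that fix the coordinate transformation must be run through the non-derogatory structure of $S$ (Assumption \ref{ass_eigenvalues_Markov}) and the hypothesis $\lambda\mu\ne1$, rather than by naive inversion; tracking the descriptor structure ($E=I$ versus general $E$) in Statement (2) is the remaining point that needs a little care.
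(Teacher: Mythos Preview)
Your necessity argument for Statement (1) is essentially the paper's: transpose (\ref{eq_Sylv_tau_pH}), pre-/post-multiply by $\Pi^{*}Q$ and $Q\Pi$, add and subtract to extract $\widetilde J,\widetilde R$, and read off that the port Hamiltonian realisation satisfies (\ref{cond_FGH}) with $P=\Pi^{*}Q\Pi$, hence $G=\widetilde B=\Pi^{*}QB$ and $H=G^{*}(\Pi^{*}Q\Pi)^{-1}$. Good.

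The gap is in your sufficiency argument. You propose to mimic Theorem~\ref{prop_pH_inc_G} and invoke Lemma~\ref{lema_pH_from_passive_H}. But that lemma is stated for the families $\Sigma_{G}$ and $\Sigma_{H}$ of (\ref{model_gen_G})--(\ref{model_gen_H}), where the dynamics matrix is \emph{determined} by the parameter, $F=S-GL$ (resp.\ $F=\mathcal Q-\mathcal R H$). In the Markov setting the model (\ref{red_mod_FGH}) has $F$ as a \emph{free} matrix; prescribing $G$ and $H$ does not by itself fix $F$, and Lemma~\ref{lema_pH_from_passive_H} gives you nothing here. What the paper does instead is force $F$ directly from the matching conditions: with $G=\Pi^{*}QB$, $H=G^{*}(\Pi^{*}Q\Pi)^{-1}$ and $P=\Pi^{*}Q\Pi$ plugged into (\ref{cond_FGH}), one gets $F\,\Pi^{*}Q\Pi\,S+\Pi^{*}QBL=\Pi^{*}Q\Pi$, and subtracting $\Pi^{*}Q$ times (\ref{eq_Sylv_tau_pH}) yields
\[
\bigl[F-(\widetilde J-\widetilde R)\widetilde Q\bigr]\,\Pi^{*}Q\Pi\,S=0,
\]
so $F=(\widetilde J-\widetilde R)\widetilde Q$ whenever $S$ is nonsingular.

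You correctly flag that $S$ is singular at the Markov point $\tau^{*}=0$, but your suggested remedy (``run through the non-derogatory structure and $\lambda\mu\ne1$'') is not what closes the argument. The paper handles this concretely by the block decomposition $S=\left[\begin{smallmatrix}0&S_{1}\\0&S_{2}\end{smallmatrix}\right]$ with $S_{2}$ of full rank and $\Pi=[0\ \widetilde\Pi]$, then reruns the same cancellation with the reduced Sylvester equation for $\widetilde\Pi$ to obtain $\bigl[F-(\widetilde J-\widetilde R)\widetilde Q\bigr]\widetilde\Pi^{*}Q\widetilde\Pi\,S_{2}=0$, from which $F$ is again forced. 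That is the missing piece you need in place of the appeal to Lemma~\ref{lema_pH_from_passive_H}. Statement~(2) then genuinely is the dual of Theorem~\ref{prop_pH_inc_G}(2) with $P=I$ in (\ref{markov_match_cond_1}), as you say.
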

\begin{proof} {\it Proof of Statement (1)}. The necessity follows from the fact that (\ref{model_pH_Markov_Pi}) matches the moments of (\ref{pH_system}). Let $\mathbf\Pi=\Pi$ satisfy (\ref{eq_Sylv_tau_pH}). Postmultiplying the transpose of (\ref{eq_Sylv_tau_pH}) by $Q\Pi$ and adding/subtracting it to/from (\ref{eq_Sylv_tau_pH}) premultiplied by $\Pi^*Q$, yields
\begin{subequations}
\begin{align}
(\widetilde J-\widetilde R) S+S^*(\widetilde J+\widetilde R) &= L^*\widetilde B-\widetilde B L, \label{eq1}\\
(\widetilde J-\widetilde R) S-S^*(\widetilde J+\widetilde R) &= 2\Pi^*Q\Pi-L^*\widetilde B-\widetilde B L, \label{eq2}
\end{align}\end{subequations} with $\widetilde R$ and $\widetilde J$ as in (\ref{model_pH_Markov_Pi_paramters}). Furthermore, adding equations (\ref{eq1}) and (\ref{eq2}) yields
\begin{equation*}
(\widetilde J-\widetilde R)\widetilde Q \Pi^*Q\Pi S=\Pi^*Q\Pi-\widetilde BL,
\end{equation*} with $\Pi^*Q\Pi$ invertible, since $Q$ is assumed invertible. Moreover, $B^*Q\Pi(\Pi^*Q\Pi)^{-1}\Pi^*Q\Pi S=B^*Q\Pi S$. Hence (\ref{model_pH_Markov_Pi}) is a model described by equations (\ref{red_mod_FGH}) with $F=(\widetilde J-\widetilde R)\widetilde Q,\ G=\widetilde B,\ H=G^*P^{-1}$ and $P$ satisfying $P=\widetilde Q^{-1}=\Pi^*Q\Pi$ from the matching conditions (\ref{cond_FGH}). \\
The sufficiency is proven in two parts. First we assume that $S$ has full rank. Let (\ref{red_mod_FGH}) be a reduced order model of (\ref{pH_system}), with $G=\Pi^*QB$ and $H=G^*(\Pi^*Q\Pi)^{-1}$. System (\ref{red_mod_FGH}) matches the moments of $\widetilde K(\tau)$, since there exists $P=\Pi^*Q\Pi$ such that $F\Pi^*Q\Pi S+\Pi^*QBL=\Pi^*Q\Pi,\ H\Pi^*Q\Pi S=B^*Q\Pi S$. In addition, we have $F\Pi^*Q\Pi S=\Pi^*Q(\Pi-BL)$, which by (\ref{eq_Sylv_tau_pH}) becomes $[F-(\widetilde J-\widetilde R)\widetilde Q]\Pi^*Q\Pi S=0$. Since $S$ is assumed to have full rank, $F=(\widetilde J-\widetilde R)\widetilde Q$, i.e. (\ref{red_mod_FGH}) is a port Hamiltonian system described by equations (\ref{model_pH_Markov_Pi}). If $S$ does not have full rank, assume, without loss of generality that $S=\left[\begin{array}{cc} 0 & S_1 \\ 0 & S_2 \end{array}\right]$, with $S_2$ square, of appropriate dimensions and full rank, and $L=[1\ 0\ \dots\ 0]$. Let $\Pi=[0\ \widetilde\Pi]$, with $\widetilde\Pi$ (having appropriate dimensions) satisfying the Sylvester equation $(J-R)Q\widetilde\Pi S_2+BS_1=\widetilde\Pi$. Assume (\ref{red_mod_FGH}) is a reduced order model with $G=\widetilde\Pi^* QB$ and $H=G^*(\widetilde \Pi^*Q\widetilde\Pi)^{-1}$. Since $S_2$ has full rank, the arguments from the previous case are followed. Hence, applying the matching conditions (\ref{cond_FGH_tilde}) yields $[F-(\widetilde J-\widetilde R)\widetilde Q]\widetilde\Pi^*Q\widetilde\Pi S_2=0$, which leads to the claim. The proof is similar for the case ${\mathbf \Pi}=\bar\Pi$.
\\
The proof of the second statement follows similar arguments as the proof of Theorem \ref{prop_pH_inc_G}, for $P=I=\widetilde Q^{-1}\widetilde Q$, where $P$ is an invertible matrix uniquely satisfying the relation (\ref{markov_match_cond_1}).
\end{proof}
If $\tau^*=0$, the models $\Sigma_{\mathbf \Pi}^{\rm pH}$ match the first $\nu$ Markov parameters of (\ref{pH_system}) and preserve the port Hamiltonian structure of the given system. Furthermore, a direct application of Proposition \ref{prop_equiv_Markov} yields that $\Sigma_V=\Sigma_{\Pi}$, with $\Sigma_V$ as in (\ref{model_pH_WV}) and (\ref{model_pH_WV_paramters}).

Based on Theorem \ref{prop_Markov_pH_inc_G} and on Proposition \ref{prop_equiv_Markov} we propose the following procedure.


\begin{alg}\label{alg_PHred_Markov}(Computation of a port Hamiltonian reduced order model that matches a prescribed number of Markov parameters of a given port Hamiltonian linear system.) \\
\n
1. Select a non-derogatory matrix $S$, such that $\sigma(S)=\underbrace{\{0,...,0\}}_{\nu}$, and $L$ such that the pair $(L,S)$ is observable. \\
2. Use any efficient algorithm to compute $V$ as in (\ref{eq_V_Markov}). \\
3. Set $\mathbf\Pi=V$. \\
4. Compute the class of reduced order models $\Sigma_{\mathbf\Pi}$ as in (\ref{red_mod_FGH}). \\
5. Let $G={\mathbf\Pi}^*QB$ and $H=G^*({\mathbf \Pi}^*Q{\mathbf \Pi})^{-1}$ and compute the port Hamiltonian model $\Sigma_{\mathbf\Pi}$ as in (\ref{model_pH_Markov_Pi}) and (\ref{model_pH_Markov_Pi_paramters}).\fin
\end{alg}

The outcome of Algorithm \ref{alg_PHred_Markov} is a port Hamiltonian model that matches the Markov parameters of (\ref{pH_system}), parameterized in $L$. The parameter $L$ cam be used to enforce further additional structure on the state-space realization of the reduced order model, increasing the physical meaning of the approximant. Note that if the result of Algorithm \ref{alg_PHred_Markov} is $\Sigma_{\bar\Pi}$ with $\bar\Pi$ the unique solution of (\ref{eq_matrix_Pi_Markov_bar}), then we obtain a $\nu$-th order model that matches $\nu+1$ Markov parameters, see also Example \ref{example_markov_match}.

Note that the results of both Theorem \ref{prop_pH_inc_G} and Theorem \ref{prop_Markov_pH_inc_G} apply to the port Hamiltonian system in any coordinate system, resulting in equivalent state-space port Hamiltonian reduced order models, without the need to compute additional transformations.

\section{Illustrative examples}\label{sect_example}

\subsection{SISO ladder network}

\begin{figure}[h]\centering
\psfrag{I}{$u=I$} \psfrag{R1}{$R_1$} \psfrag{R2}{$R_2$}
\psfrag{L1}{$L_1,\phi_1$} \psfrag{L2}{$L_2,\phi_2$} \psfrag{C1}{$C_1,q_1$}
\psfrag{C2}{$C_2,q_2$} \psfrag{R3}{$R_3$} \psfrag{Y}{$y=V_{C_1}$}
\includegraphics[scale=1.1]{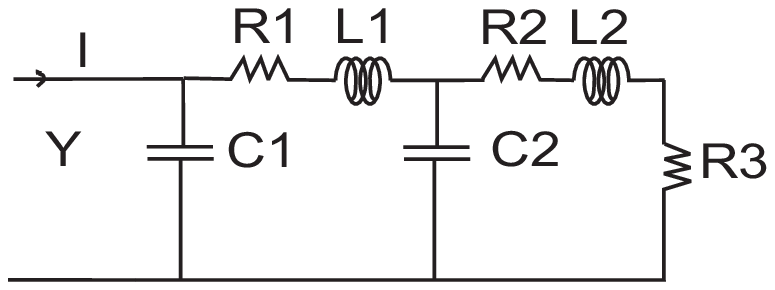} \caption{Fourth order ladder network.} \label{example_ladder}
\end{figure}
Consider the ladder network in Fig. \ref{example_ladder}, with $C_1,\ C_2,\ L_1,\ L_2,\ R_1,\ R_2$ the capacitances, inductances, and resistances of the corresponding capacitors, inductors, and resistors, respectively. The port Hamiltonian representation of this system is given by equations of the form (\ref{pH_system}), with $x=[q_1\ \phi_1\  q_2\ \phi_2]^*$ and
\begin{equation}\label{exmp_ladder} \begin{split}
&J=\left[\begin{array}{cccc} 0& -1 & 0 & 0 \\ 1 & 0 & -1 & 0 \\ 0 & 1 & 0 & -1 \\ 0 & 0 & 1 & 0 \end{array}\right],\ R=\diag\{0,R_1,0,R_2+R_3\}, \ Q=\diag\left\{\frac{1}{C_1},\frac{1}{L_1},\frac{1}{C_2},\frac{1}{L_2}\right\},\ B=[1\ 0\ 0\ 0]^*.
\end{split}\end{equation}
Note that $\phi$ denotes the flux through the inductor $L$ and $q$ denotes the charge at the capacitor $C$.
Assume $C_1=C_2\ne 0$, $L_1=L_2\ne 0$, $R_1=R_2=R_3\ne 0$. The transfer function of the port Hamiltonian system (\ref{exmp_ladder}) is
\begin{equation*}\begin{split}
 K(s) =\frac{L_1 ^{2}C_1s ^{3}+3L_1 R_1 C_1 s ^{2}+\left(2L_1 +2R_1 ^{2}C_1 \right)s +3R_1 }{C_1 ^{2}L_1 ^{2}s ^{4}+3C_1 ^{2}L_1 R_1s ^{3}+\left(3C_1 L_1 +2C_1 ^{2}R_1 ^{2}\right)s ^{2}+5R_1 C_1 s +1}.
\end{split}\end{equation*}

\paragraph*{Matching at finite interpolation points.} The first two moments of (\ref{exmp_ladder}) at $0$ are $\eta_0=3R_1$ and $\eta_1=2L_1-13R_1^2C_1$. Let $L=[1\ 0]$ and $S=\left[\begin{array}{cc} 0 & 1 \\ 0 & 0 \end{array}\right]$. Note that
$$\Pi=\left[\begin{array}{cc} 3R_1C_1 & C_1(L_1-13C_1R_1^2) \\ L_1 & -3R_1L_1C_1 \\ 2R_1C_1 & C_1(L_1-10R_1^2C_1) \\ L_1 & -5R_1L_1C_1 \end{array} \right].$$
\noindent A reduced order port Hamiltonian model that matches the moments $\eta_0$ and $\eta_1$ is given by (\ref{model_pH_Pi}), with
\begin{equation}\label{exmp_ladder_parameters}\begin{split}
&\widetilde J=\left[\begin{array}{cc} 0& 2L_1\\ -2L_1 & 0\end{array}\right] ,\ \widetilde R=R_1\left[\begin{array}{cc} 3 & -13R_1C_1 \\ -13R_1C_1 & 59R_1^2C_1^2 \end{array}\right],\\& \widetilde Q=\frac{1}{10L_1^3-R_1^2C_1(11L_1^2-44R_1^2L_1-16R_1^4)} \left[\begin{array}{cc} 5L_1^2-R_1^2C_1(38L_1-269R_1^2) & 59R_1^3C_1 \\ 59R_1^3C_1 & \frac{13R_1^2C_1+2L_1}{C_1} \end{array}\right],\ \widetilde B=\left[\begin{array}{c}3R_1\\ 2L_1-13R_1^2C_1\end{array}\right].\end{split}\end{equation}
\noindent The transfer function of the reduced order model is $\displaystyle K_{\Pi}(s)=\frac{a}{b}\frac{s+\frac{d}{a}}{s^2+\frac{c}{b}s+\frac{e}{b}}$, with $a,b,c$ given by
\setlength\arraycolsep{2pt}
\begin{align*}
a &= R_1^2C_1(16C_1^2+28R_1^2L_1C_1-7L_1^2)+8L_1^3,
\\
b &= C_1(10L_1^3-R_1^2C_1(11L_1^2-44R_1^2L_1-16R_1^4)),
\\
c &= R_1C_1(40R_1^4C_1^2+15L_1^2+4R_1^2L_1C_1),
\\
d &= 12R_1(L_1^2+2R_1^4C_1^2),
\\
e &= 4(L_1^2+2R_1^4C_1^2).
\end{align*}
Comparing the port Hamiltonian model $\Sigma_\Pi$ in (\ref{exmp_ladder_parameters}) with the model $\Sigma_V$ in (\ref{model_pH_WV}) yields $\Sigma_\Pi=\Sigma_{-V}$, i.e., $\Pi=-V$, where $V$ spans a Krylov subspace of the given port Hamiltonian system given by (\ref{exmp_ladder}) (see also Proposition \ref{prop_red_PH_V}). Note that this yields a reduced order model with the transfer function $K_{\Pi}(s)$.

Let $C_1=1, C_2=2, L_1=L_2=1$ and $R_1=R_2=R_3=1$. Furthermore, let $L=[l_1\ l_2]$, $l_1\in\mathbb{R},\ l_2\in\mathbb{R}$. The pair $(L,S)$ is observable if and only if $l_1\ne 0$. Note that
$$\Pi(l_1,l_2)=\left[\begin{array}{cccc} 3l_1 & l_1 & l_1 & l_1 \\ 3l_2-l_1 & l_2-3l_1 & l_2-\frac{7}{2}l_1 & l_2-4l_1 \end{array}\right]^*.$$
The class of port Hamiltonian models, parameterized in $l_1$ and $l_2$ is given by
\begin{equation*}\label{exmp_ladder_par_L}
\begin{split}
&\widetilde J(l_1,l_2)=\left[\begin{array}{cc} 0 & 2l_1^2 \\ -2l_1^2 & 0 \end{array}\right],\ \widetilde R(l_1,l_2)=\left[\begin{array}{cc} 3l_1^2& 3l_2l_1-11l_1^2 \\ 3l_2l_1-11l_1^2 & 3l_2^2-22l_1l_2+41l_1^2 \end{array}\right],\\& \widetilde Q(l_1,l_2)=\frac{1}{31l_1^4}\left[\begin{array}{cc} 26l_2^2-164l_1l_2+261l_1^2 & 2l_1(41l_1-13l_2) \\ 2l_1(41l_1-13l_2) & 26l_1^2 \end{array} \right],\ \widetilde B(l_1,l_2)=[3l_1\ \ 3l_2-9l_1]^*.\end{split}
\end{equation*}
For $l_2=\frac{41}{13}l_1$, we obtain the subset of reduced order models with the following properties: they match the first two moments of (\ref{exmp_ladder}) at zero, preserve the port Hamiltonian structure of the model and have diagonalized Hamiltonians. For $l_2=\frac{11}{3}l_1$, we obtain a subset of port Hamiltonian reduced order models with diagonal dissipation matrix. All the parameterized models have the same input-output behaviour described by the transfer function $K_{\Pi}(s)=\frac{9(3s+4)}{31s^2+45s+12}$. Note that the physical meanings of the states of the approximant are more difficult to recover using only the parameters $l_i$, $i=1,2$. However, the selection of the interpolation points $s_i$, $i=1,2$ (as free parameters), can bring additional insight into the physics of the approximant.
\paragraph*{"Dual" family of port Hamiltonian reduced order models.} Let ${\mathcal R}=[r_1\ r_2]^*$, $r_1\in\mathbb{R},\ r_2\in\mathbb{R}$. The pair $({\mathcal Q},{\mathcal R})$ is controllable if and only if $r_1\ne 0$. Solving (\ref{eq_Sylvester_Y}), we obtain
\begin{equation*}\Upsilon(r_1,r_2)=\left[\hspace{-0.1cm}\begin{array}{cccc} 3r_1 & -r_1 & 2r_1 & -r_1 \\ 3r_2-r_1 & 3r_1-r_2 & 2r_2-7r_1 & 4r_1-r_2 \end{array}\hspace{-0.1cm}\right].\end{equation*}
\noindent The class of port Hamiltonian models, all with transfer function $K_\Upsilon(s)=\frac{9(3s+2)}{32s^2+27s+6}$, parameterized in $r_1$ and $r_2$ is given by
\begin{equation*}\label{exmp_ladder_par_R}
\begin{split}
&\widetilde J(r_1,r_2)=\left[\begin{array}{cc} 0 & -2r_1^2 \\ 2r_1^2 & 0 \end{array}\right],\ \widetilde R(r_1,r_2)=\left[\begin{array}{cc} 3r_1^2& 3r_2r_1-11r_1^2 \\ 3r_2r_1-11r_1^2 & 3r_2^2-22r_1r_2+41r_1^2 \end{array}\right],\\& \widetilde Q(r_1,r_2)=\frac{1}{32r_1^4}\left[\begin{array}{cc} 204r_1^2-124r_1r_2+19r_2^2 & r_1(62r_1-19r_2) \\ r_1(62r_1-19r_2) & 19r_1^2 \end{array}\right],\ \widetilde B(r_1,r_2)=[3r_1\ \ 3r_2-9r_1]^*.\end{split}
\end{equation*} For $r_2=\frac{62}{19}r_1$ we obtain a subclass of port Hamiltonian reduced order models with diagonal Hamiltonians. For $r_2=\frac{11}{3}r_1$ we obtain a subclass of port Hamiltonian reduced order models with diagonal dissipation matrix.

\paragraph*{Matching at infinity.} Let $L=[l_1\ l_2\ l_3]^*$, $l_1\in\mathbb{R},\ l_2\in\mathbb{R},\ l_3\in\mathbb{R}$. Note that
$$\Pi(l_1,l_2,l_3)=\left[\begin{array}{ccc} l_1 & l_2 & l_3-l_1 \\ 0 & l_1 & l_2-l_1 \\ 0 & 0 & l_1 \\ 0 & 0 & 0\end{array}\right].$$ The family of port Hamiltonian models, parameterized by $l_1,\ l_2,\ l_3,$ is given by equation (\ref{exmp_ladder_Markov_par_L})
\begin{figure*}
\begin{align}\label{exmp_ladder_Markov_par_L}
\widetilde J(l_1,l_2,l_3) &= \left[\hskip -0.1cm \begin{array}{ccc} 0 & -l_1^2 & l_1(l_1-l_2)\\ l_1^2 & 0 & -3l_1^2+l_1l_3-l_2^2+l_1l_3 \\ l_1(l_1-l_2) & -3l_1^2+l_1l_3-l_2^2+l_1l_3 & 0 \end{array} \hskip -0.1cm\right],\ \widetilde R(l_1,l_2,l_3)=\left[\begin{array}{ccc} 0 & 0 & 0 \\ 0 & l_1^2 & l_1(l_1-l_2) \\ 0 & l_1(l_1-l_2) & (l_1-l_2)^2 \end{array}\right], \nonumber\\
\widetilde Q(l_1,l_2,l_3) &= \frac{1}{2l_1^6}\left[\begin{array}{ccc} q_{11}(l_1,l_2,l_3) & q_{12}(l_1,l_2,l_3) & q_{13}(l_1,l_2,l_3)\\ q_{12}(l_1,l_2,l_3) & q_{22}(l_1,l_2,l_3) & q_{23}(l_1,l_2,l_3) \\ q_{13}(l_1,l_2,l_3) & q_{23}(l_1,l_2,l_3) & q_{33}(l_1,l_2,l_3) \end{array}\right],
\ \widetilde B(l_1,l_2,l_3)=[l_1\ l_2\ l_3-l_1]^*. \\
\hline \nonumber\end{align}
\end{figure*}
with
\setlength\arraycolsep{2pt}
\begin{align*}
q_{11}(l_1,l_2,l_3) &= 5l_1^2l_2^2 - 2l_1l_3l_2^2+l_2^4-2l_2^3l_1+3l_1^4-2l_1^3l_3 \nonumber\\ & + l_3^2l_1^2-2l_1^3l_2+2l_2l_3l_1^2, \nonumber
\\
q_{12}(l_1,l_2,l_3) &= l_1^4-4l_1^3l_2+l_1^2l_2l_3-l_2^3l_1+2l_1^2l_2^2-l_1^3l_2, \nonumber
\\
q_{13}(l_1,l_2,l_3) &= l_1^2(l_2^2-l_1l_2+l_1^2-l_1l_3), \nonumber
\\
q_{22}(l_1,l_2,l_3) &= l_1^2(l_2^2-2l_1l_2+3l_1^2), \nonumber
\\
q_{23}(l_1,l_2,l_3) &= l_1^3(l_1-l_1l_2), \nonumber
\\
q_{33}(l_1,l_2,l_3) &= l_1^4.
\end{align*} The input-output behaviour of the family of models described by (\ref{exmp_ladder_Markov_par_L}) is given by the transfer function $K_{\Pi}(s)=\frac{s^2+s+2}{s(s^2+s+3)}$.\\
Let now $L_2=[l_2\ 0]$, $S_1=[1\ 0]$, $S_2=\left[\begin{array}{cc} 0& 1 \\ 0 & 0 \end{array}\right]$, $L=[l_1\ L_2]$ and $S=\left[\begin{array}{cc} 0 & S_1 \\ 0 & S_2 \end{array}\right]$. Consider now equation (\ref{eq_mom_Markov_tilde}), which in this particular case is
\begin{equation}\label{eq_Sylv_tau_pH_tilde}
\bar\Pi_0=0,\ (J-R)Q\widetilde\Pi S_2+Bl_1 S_1+BL_2S_2=\widetilde\Pi.
\end{equation} The second and the third Markov parameters of (\ref{exmp_ladder}) are in one-to-one relation with $B^*Q\widetilde \Pi$, with $\widetilde\Pi (l_1, l_2)=\left[\begin{array}{cccc} l_1 & 0 & 0 & 0 \\ l_2 & l_1 & 0 & 0 \end{array}\right]^*$. A family of reduced order port Hamiltonian models, of dimension two, that match the first three Markov parameters is described by equations (\ref{model_pH_Markov_Pi}), with
\begin{equation*}\begin{split}
& \widetilde J(l_1,l_2)=\left[\begin{array}{cc} 0 & -l_1^2 \\ l_1^2 & 0\end{array}\right],\ \widetilde R(l_1,l_2)=\left[\begin{array}{cc} 0 & 0 \\ 0 & l_1^2 \end{array}\right], \ \widetilde Q(l_1,l_2)=\frac{1}{l_1^4}\left[\begin{array}{cc} l_1^2+l_2^2 & -l_1l_2 \\ -l_1l_2 & l_1^2 \end{array}\right],\ \widetilde B(l_1,l_2)=[l_1\ l_2]^*.
\end{split}\end{equation*} The input-output behaviour of this family of models is given by the transfer function $K_{\Pi}(s)=\frac{s+1}{s^2+s+1}$. Note that selecting $l_1=1$ and $l_2=0$, yields $\tilde Q(l_1)=I$ and $\widetilde B(l_1)=[1\ 0]^*$, rendering the reduced order model a ladder network as in Fig. \ref{example_ladder_redOrd} with the parameters $\widetilde L=\widetilde C=(\widetilde R_1+\widetilde R_2)=1$.
\begin{figure}[h]\centering
\psfrag{I}{$u=I$} \psfrag{R1}{$\widetilde R_1$} \psfrag{R2}{$\widetilde R_2$}
\psfrag{L1}{$\widetilde L,\widetilde\phi$} \psfrag{C1}{$\widetilde C,\widetilde q$}
\psfrag{Y}{$\psi=V_{\widetilde C}$}
\includegraphics[scale=1.1]{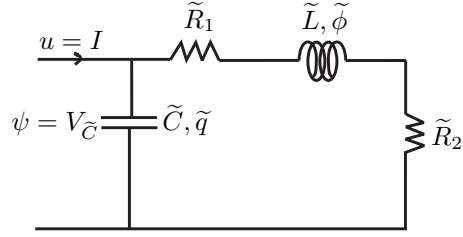} \caption{Second order ladder network.} \label{example_ladder_redOrd}
\end{figure}
Furthermore, let
$${\mathcal Q}=\left[\begin{array}{cc} 0 & 0 \\ 1 & 0 \end{array}\right],\ {\mathcal R}=\left[\begin{array}{c} r_1 \\ r_2 \end{array}\right],$$
with $r_1\ne 0$, $r_2\in\mathbb{R}$. Let $\widehat\Upsilon$ be the unique solution of (\ref{eq_mom_Y_Markov_hat}). The first two Markov parameters of $K(s)$ are in one-to-one relationship with ${\mathcal Q}({\mathcal Q}\widehat\Upsilon + {\mathcal R}B^*Q)B = [0\ r_1]^*$. Consider $\widetilde J$, $\widetilde R$ and $\widetilde Q$ as in (\ref{model_pH_Markov_Y_parameters}), i.e.
\begin{equation*}\begin{split} & \widetilde J=\widehat\Upsilon J \widehat\Upsilon^* = \left[\begin{array}{cc} 0 & 3r_1^2 \\ -3r_1^2 & 0\end{array}\right],\ \widetilde R=\widehat\Upsilon R \widehat\Upsilon^*= \left[\begin{array}{cc} r_1^2 & r_1(r_2-r_1) \\ r_1(r_2-r_1) & (r_1-r_2)^2\end{array}\right], \ \widetilde Q=(\widehat\Upsilon Q^{-1} \widehat\Upsilon^*)^{-1} = \frac{1}{3r_1^2}\left[\begin{array}{cc} \frac{4r_1^2-2r_1r_2+r_2^2}{r_1^2} & \frac{r_1-r_2}{r_1} \\ \frac{r_1-r_2}{r_1} & 1\end{array}\right],\end{split}\end{equation*}
where
$$\widehat\Upsilon=\left[\begin{array}{cccc} 0 & -r_1 & 0 & 0 \\ -r_1 & r_1-r_2 & 2r_1 & 0\end{array}\right]$$
is the unique solution of (\ref{eq_mom_Y_Markov_hat}). Furthermore, solving (\ref{markov_match_cond_2}) yields
$$\widehat P=\diag\{\sqrt{3},\sqrt{3}\},\ \widetilde B=\left[\begin{array}{c} 0 \\ -\sqrt{3}r_1 \end{array}\right].$$
Then a port Hamiltonian reduced order model that matches the first two Markov parameters of the given system $\Sigma_\Upsilon$, described by equations (\ref{model_pH_Markov_Pi}) and (\ref{model_pH_Markov_Y_parameters}) with the transfer function $K_{\Upsilon}(s)=\frac{s+1}{s^2+s+3}$.


\subsection{MIMO single machine infinite bus system}

\begin{figure}[h]
\centering
\includegraphics[scale=0.4]{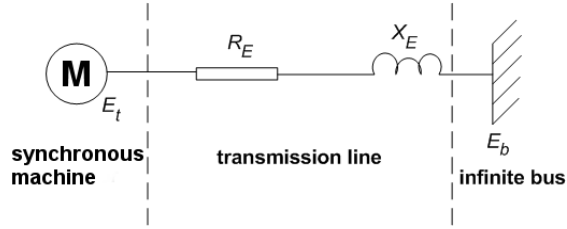}
\caption{A single machine connected to an infinite bus through a
transmission line.}\label{SMIB_fig}
\end{figure}

Consider the model of a single machine connected to an infinite bus (SMIB) useful in the analysis of power systems stability, where a power system is modelled as the interconnection of a large number of such systems. The machine under consideration has one field winding, three stator windings, two $q$-axis amortisseur circuits and one $d$-axis amortisseur circuit, all magnetically coupled consisting of electrical and mechanical equations, see e.g. \cite{kundur-1994, anderson-fouad-1994}.

The nonlinear port Hamiltonian model of the SMIB is given by equations of the form
\begin{equation}\label{apH} \dot x
=(J(x)-R)Qx+Bu, \ y=B^*Qx,
\end{equation}

with
\begin{equation*}\begin{split}
&J(x)=\left[\begin{array}{cccccccc}
0 & \omega X_E & 0 & 0 & 0 & 0 & \Psi_q
\\
-\omega X_E & 0 & 0 & 0 & 0 & 0 & -\Psi_d
\\
0 & 0 & 0 & 0 & 0 & 0 & 0
\\
0 & 0 & 0 & 0 & 0 & 0 & 0
\\
0 & 0 & 0 & 0 & 0 & 0 & 0
\\
0 & 0 & 0 & 0 & 0 & 0 & 0
\\
-\Psi_q & \Psi_d & 0 & 0 & 0 & 0 & 0
\end{array}\right], \ B=\left[\begin{array}{ccc}
0& 0 & \sin\delta \\ 0& 0 & \cos\delta \\ 0 & 1 &0\\ 0 & 0 & 0 \\ 0 & 0 & 0 \\ 0 & 0 & 0 \\ 1 & 0& 0\end{array}\right]\\& R=\diag\{R_a+R_E,R_a+R_E,R_{fd},R_{1d},R_{1q},R_{2q},K_d\},
\end{split}\end{equation*} and $Q=\diag \left\{{\mathbf L}^{-1},\frac{1}{j}\right\}$, where

\begin{itemize}

\item $\delta$ and $\omega$ are the angle and the angular velocity of the rotor, respectively,

\item $R_E$ and $ X_E$ are transmission line resistance and reactance, respectively,

\item $\Psi_d$ and $\Psi_q$ are the stator fluxes in the $d$-axis and $q$-axis, respectively,

\item $R_a$ is the stator resistance, $R_{fd}$ is the field circuit resistance, $R_i, i\in\{1d,1q,2d\}$ are the amortisseurs resistances,

\item $K_d$ is the damping constant and $j$ is the inertia of the rotor,

\item ${\mathbf L}>0$ is the inductance matrix, see, e.g., \cite{ionescu-scherpen-CDC2008} for further details.
\end{itemize}


The state is $x=[\Psi^*\ \omega]^*\in\mathbb{R}^7$, where $$\Psi^*=\left[ \Psi_{ds} \ \Psi_{qs}\ \Psi_{fd} \Psi_{1d} \ \Psi_{1q} \ \Psi_{2q}\right],$$ with

\begin{itemize}

\item  $\Psi_{ds}=\Psi_d+X_Ei_d$ and $\Psi_{qs}=\Psi_q+X_E i_q$, where $i_d$ and $i_q$ are the stator currents,

\item $\Psi_{fd}$ the field flux,

\item $\Psi_{i}, \ i\in\{1d,1q,2d\}$ the rotor fluxes due to the amortisseurs.

\end{itemize}

The input is $u=\left[E_b\ e_{fd}\ T_m\right]^*\in\mathbb{R}^3$, where $E_b$ is the infinite bus voltage (interconnection variable), $e_{fd}$ is the field voltage (control variable) and $T_m$ is mechanical input power. The (passive) output is $y=B^*Qx=[I_b\ i_{fd}\ \omega_m]^*$, where $I_b$ is the infinite bus current, $i_{fd}$ is the field current and $\omega_m$ is mechanical output angular velocity.

Linearising the model (\ref{apH}) around an equilibrium point $x^*=[\Psi^*\ p^*]^*$ yields a system described by equations of the form (\ref{pH_system}), with $J=0$, defined by $(-RQ,B,B^*Q)$, which is minimal and passive. Note that, we solve the tangential interpolation conditions (\ref{eq_tangent}). To this end, we apply Algorithm \ref{alg_PHred_finite} to the study case defined by the equations (\ref{apH}). We have the following parameters for the machine, taken from e.g. \cite{kundur-1994}
\begin{equation*}\begin{split}
& {\mathbf L}= \left[
\begin {array}{ccccccc}  0.22&0& 0.01& 0.01&0&0
\\0& 0.219&0&0& 0.009& 0.009\\
 0.01&0& 1.825& 1.660&0&0\\ 0.01&0& 1.660& 1.8313
&0&0\\0& 0.009&0&0&0& 0.009\\0&
 0.009&0&0& 0.009& 0.134
\end {array} \right],\ B=\left[\begin{array}{ccccccc}
0& 0 & 0 & 0 & 0 & 0 & 1 \\ 0& 0 & 1 & 0 & 0 & 0 & 0\\ 0.7071 & 0.7071 & 0 & 0 & 0 & 0 & 0 \end{array}\right]^T,
\end{split}\end{equation*}
$R=\diag\ \{0.031, 0.031, 0.0006, 0.0284, 0.00619, 0.023638, 10\}$ and $j=6$.
We consider 0 an equilibrium point which is asymptotically stable. Linearising around this equilibrium point we obtain a minimal asymptotically stable linear realization with $\delta$ as a parameter, see, e.g., \cite{ionescu-scherpen-CDC2008}. Let $S=\diag\ \{0.055,0.01,1.667,0.0021\}$, $l_1=[1\ 0\ 0]^*$, $l_2=[0\ 1\ 0]^*$, $l_3=[0\ 0\ 1]^*$, $l_4=[1\ 0\ 1]^*$ and $L=[l_1\ l_2\ l_3\ l_4]$. Note that $(L,S)$ is observable. Using any numerically efficient algorithm compute $\Pi=V=[(s_1+RQ)^{-1}Bl_1\ (s_2+RQ)^{-1}Bl_2\ (s_3+RQ)^{-1}Bl_3\ (s_4+RQ)^{-1}Bl_4]$. The fourth order linear system that matches the moments of $(-RQ,B,B^*Q)$ at $(L,S)$, in the sense of satisfying the right tangential interpolation conditions (\ref{eq_tangent}) is $(S-GL,G,B^*Q\Pi)$, with
\begin{equation*}\begin{split}
& S-GL=\left[\begin{array}{cccc}
   -1.6667  & -0.0048  & -0.0004  & -1.7220 \\
   -0.0000  & -0.0081  & -0.0002  & -0.0002 \\
    0.0000  & -0.4825  & -0.0875  & -1.7545 \\
   -0.0000  &  0.0047  &  0.0004  &  0.0025 \end{array}\right],\
G=\left[\begin{array}{ccc}
    1.7217  &  0.0048  &  0.0004 \\
    0.0000  &  0.0181  &  0.0002 \\
   -0.0000  &  0.4825  &  1.7545 \\
    0.0000  & -0.0047  & -0.0004\end{array}\right].
\end{split}\end{equation*}
Note that $\Pi$ is the unique solution of (\ref{eq_Sylvester}). Using Theorem \ref{thm_redmod_CPi}, the port Hamiltonian state-space representation of $(S-GL,G,B^*Q\Pi)$ is given by a system (\ref{model_pH_Pi}) with
{\small \begin{equation*}\begin{split}
& \widetilde R=P(S-GL)=
\left[\begin{array}{cccc}
    0.0937 &  -0.0000  &  0.0000  &  0.0967 \\
    0.0000 &  71.6093  & -0.2859  &-65.0309 \\
   -0.0000 &  -0.2859  &  0.0525  &  1.7312 \\
    0.0967 & -65.0309  &  1.7312  &135.8342
\end{array}\right],\
\widetilde B=PG=
\left[\begin{array}{ccc}
    0.0968 &  -0.0000  &  0.0000 \\
    0.0000 & 162.9925  &-73.4077 \\
   -0.0000 &  -0.7246  &  1.9466 \\
    0.0999 &-104.9205  &172.7502 \\
\end{array}\right],
\end{split}\end{equation*}}
with $P=(\Pi^*Q\Pi)^{-1}=\widetilde Q^{-1}$, where
$$
\widetilde Q=
\left[\begin{array}{cccc}
   17.7849  & -0.0000  &  0.0111  & -0.0001 \\
   -0.0000  &  0.0001  &  0.0051  &  0.0000 \\
    0.0111  &  0.0051  &  2.0518  & -0.0108 \\
   -0.0001  &  0.0000  & -0.0108  &  0.0001
\end{array}\right].
$$
Note that $(S-GL,G,B^*Q\Pi)$ is a passive system. For an accurate approximation, following the arguments of Remark \ref{obs_optimal_choice}, the interpolation points chosen were approximations of the mirror images of the reduced order model. The initializing choice were the poles of the fourth order balanced truncation, known to have a good approximation error.

\begin{figure}[tbp]\centering
\subfigure[]{
\includegraphics[scale=0.6]{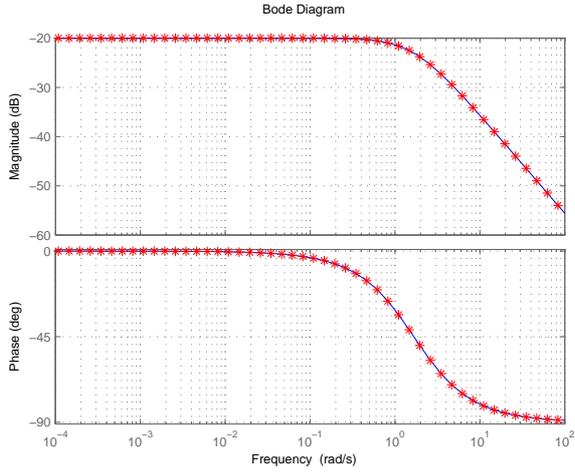} \label{fig_example_smib1}}
\subfigure[]{
\includegraphics[scale=0.6]{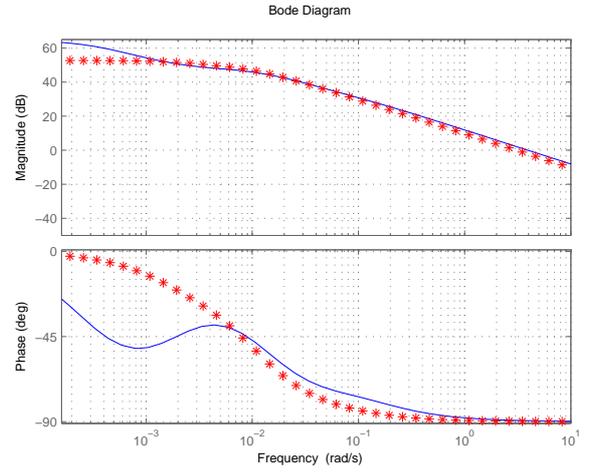}\label{fig_example_smib2}}
\subfigure[]{
\includegraphics[scale=0.6]{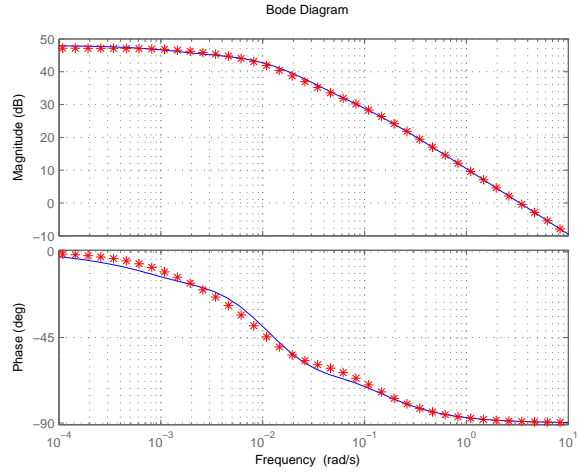}\label{fig_example_smib3}}

\caption{Bode plots of the transfer function between the bus voltage $E_b$ and the bus current $i_b$ (a), from the field voltage $E_{fd}$ to the field current $I_{fd}$ (b) and from the torque $T_m$ to the angular velocity $\omega_m$ (c). The solid line represents the evolutions of the 7th order SMIB and the starred line represents the evolution of the reduced 4th order model}.\label{fig_smib}\end{figure}

Figure \ref{fig_smib} shows the Bode plots of the elements of the transfer matrix that correspond to the transfer between the passive inputs and outputs respectively, i.e., the transfer function from the bus voltage $E_b$ to the bus current $i_b$, as in Fig. \ref{fig_example_smib1}, the transfer function from the field voltage $E_{fd}$ to the field current $I_{fd}$, as in Fig. \ref{fig_example_smib2}, and the transfer function from the torque $T_m$ to the angular velocity $\omega_m$, as in Fig. \ref{fig_example_smib3}, respectively. The plots show both the original and the approximated responses, depicted by solid line and starred line, respectively.

\newpage

\section{Conclusions}

In this paper we have discussed the solutions of the port Hamiltonian structure preserving model reduction Problem \ref{prob_redmod}, based on time-domain moment matching at a set of both finite and infinite points. In the case of matching at finite interpolation points we have first obtained the reduced order port Hamiltonian model that matches the moments of a given port Hamiltonian system (Proposition \ref{prop_red_pH_Pi}). Furthermore, we have characterized \emph{all} the reduced order models which match the moments of the given port Hamiltonian system and preserve the port Hamiltonian structure (Theorem \ref{prop_pH_inc_G}). We have obtained {\it families} of state-space {\it parameterized}, reduced order port Hamiltonian models that approximate the given port Hamiltonian system, all models having the same transfer function. The state-space parameters allow to enforce additional constraints on the structure and/or state-space realization and, in the MIMO case, they have been used to solve the tangential interpolation problem. We have given a possible procedure to compute the family of port Hamiltonian approximations (Algorithm \ref{alg_PHred_finite}).

We have also studied the problem of Markov parameters matching, extending the time-domain moment matching results to the case of interpolation points at infinity. We have defined the moments of a class of linear, descriptor systems, associated to a given linear system, in terms of the unique solutions of Sylvester equations and their dual counterparts (Propositions \ref{prop_mom_Markov} and \ref{prop_mom_Markov_Y}). In particular, the Markov parameters of a given system are the moments of a descriptor realization associated to the given transfer function at zero. Furthermore, we have related the moments to the well-defined steady-state response of the descriptor realization driven by/driving signal generators (Theorems \ref{thm_Markov_time} and \ref{thm_Markov_time_Y}). We have obtained {\it families} of {\it parameterized}, descriptor reduced order models that match a set of prescribed moments of the descriptor realization associated to a given linear system (Proposition \ref{prop_redmod_F_id}). In particular, matching at zero has yielded classes of reduced order models that match the Markov parameters of the given linear system. Finally, applying these results to linear port Hamiltonian systems, we have solved Problem \ref{prob_redmod_Markov}, yielding families of state-space parameterized, reduced order port Hamiltonian models that match the Markov parameters of the given port Hamiltonian system (Proposition \ref{prop_red_pH_Markov_Pi}, Theorem \ref{prop_Markov_pH_inc_G} and Algorithm \ref{alg_PHred_Markov}). Finally, the examples proposed in Section \ref{sect_example} have illustrated the aforementioned results. 

For future work, the nonlinear extension of the results in this paper is a goal. Furthermore, it is very important to determine
how many variables are retained in reduced order model, the relation of the variables between two systems, and the physical meaning of each variable and we will address this issue in the future.


\appendix

\numberwithin{equation}{section} %
\numberwithin{rem}{section} %
\numberwithin{table}{section} %
\numberwithin{prop}{section} %
\numberwithin{lem}{section} %
\numberwithin{thm}{section} %

\section{Preliminaries for the proof of Theorem \ref{prop_Markov_pH_inc_G}}

\begin{thm}\label{thm4_[4]}\cite{astolfi-TAC2010}
The family of $\nu$-th order models $\Sigma_G$ as in (\ref{model_gen_G}) contains a passive system if and only if there exists a symmetric and positive definite matrix $P$ such that
\begin{equation}\label{eq_passive}
S^*P+PS \leq \Pi^*QBL+L^*B^*Q\Pi,
\end{equation} where $\Pi$ is the unique solution of (\ref{eq_Sylvester}).
\fin\end{thm}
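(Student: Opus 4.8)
The plan is to turn the statement into an application of the Kalman--Yakubovich--Popov (positive real) lemma to the generic member $\Sigma_G=(S-GL,\,G,\,B^*Q\Pi)$ of the family (\ref{model_gen_G}), and then to use the Sylvester data to eliminate $G$ from the resulting linear matrix inequality. Recall that a linear system $(\widehat A,\widehat B,\widehat C)$ with no feedthrough is passive if and only if there exists $P=P^*>0$ such that $\widehat A^*P+P\widehat A\le 0$ and $P\widehat B=\widehat C^*$ (see \cite{popov-KYP1962,willems-1972,rantzer-SCL1996}); equivalently, one may derive these two conditions directly by differentiating the candidate storage function $V(\xi)=\tfrac12\xi^*P\xi$ along the trajectories of $\Sigma_G$, imposing $\dot V\le u\psi$, and separating the terms that are linear in $u$.

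Specializing to $\Sigma_G$, with $\widehat A=S-GL$, $\widehat B=G$ and $\widehat C=B^*Q\Pi$, passivity of $\Sigma_G$ is equivalent to the existence of $P=P^*>0$ with
\begin{equation*}
(S-GL)^*P+P(S-GL)\le 0,\qquad PG=\Pi^*QB.
\end{equation*}
The key step is the substitution. Since $P$ and $Q$ are symmetric, $PG=\Pi^*QB$ yields $G^*P=(PG)^*=B^*Q\Pi$, hence
\begin{equation*}
(S-GL)^*P+P(S-GL)=S^*P+PS-L^*G^*P-PGL=S^*P+PS-L^*B^*Q\Pi-\Pi^*QBL,
\end{equation*}
so the dissipation inequality becomes precisely $S^*P+PS\le \Pi^*QBL+L^*B^*Q\Pi$, which no longer involves $G$. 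This gives both implications at once: if some $\Sigma_G$ is passive, extract the associated $P=P^*>0$ and the identity above produces (\ref{eq_passive}); conversely, given $P=P^*>0$ satisfying (\ref{eq_passive}), set $G:=P^{-1}\Pi^*QB$, so that $PG=\Pi^*QB$ by construction and, by the same identity, $(S-GL)^*P+P(S-GL)=S^*P+PS-\Pi^*QBL-L^*B^*Q\Pi\le 0$; the positive real lemma then certifies that this particular $\Sigma_G$ is passive, i.e. the family contains a passive system.

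The part requiring care is the positive \emph{definiteness} of $P$ in the "only if" direction: the positive real lemma returns a positive definite storage matrix for a minimal (reachable and observable) realization, whereas not every $\Sigma_G$ in the family need be minimal. This is handled by passing to a minimal realization of the (positive real) transfer function shared by the passive members of the family and transporting the corresponding positive definite $P$ back, using that all models in the family have the same transfer function and the same moments $B^*Q\Pi$ (Theorem~\ref{thm_redmod_CPi}, Proposition~\ref{prop_matching_happens}); the remaining manipulations are routine. Everything else in the argument is the elementary algebra displayed above together with the skew-symmetry of $J$ and symmetry of $R$ and $Q$ already used elsewhere in the paper.
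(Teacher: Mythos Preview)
The paper does not actually prove this theorem: it is quoted in the appendix as a preliminary result from \cite{astolfi-TAC2010} and stated without proof. The only indication of the intended argument is the proof of the dual Lemma~\ref{lema_passive} immediately below it, which reads in its entirety ``the proof follows immediately from applying the Kalman--Yakubovich--Popov (KYP) lemma to system (\ref{model_gen_H}).'' Your approach---specialise the positive real lemma to $(S-GL,G,B^*Q\Pi)$, use the coupling condition $PG=\Pi^*QB$ to eliminate $G$, and read off (\ref{eq_passive})---is exactly this, carried out in detail. So your proposal is correct and matches the paper's intended route.

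One small correction to your last paragraph: the models $\Sigma_G$ in the family do \emph{not} all share the same transfer function (different $G$'s give different poles and different transfer functions; what they share is the set of moments $B^*Q\Pi$ at $\sigma(S)$). So the sentence about ``the transfer function shared by the passive members of the family'' is not right as written. The minimality concern you raise is legitimate, but the cleaner fix is local: if a particular $\Sigma_G$ is passive, restrict to its minimal part to extract a positive definite storage $P_0$, and then note that the algebraic identity $PG=\Pi^*QB$ together with observability of $(L,S)$ (which is assumed) and controllability of $(S-GL,G)$ lets you lift $P_0$ to a full-rank $P$ on $\mathbb{R}^\nu$. This is routine but not the argument you sketched.
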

\begin{lem}\label{lema_passive}
A family of models (\ref{model_gen_H}) contains a passive model if and only if there exists $P=P^*>0\in\mathbb{R}^{\nu\times\nu}$ such that $P{\mathcal Q}^*+{\mathcal Q}P\leq {\mathcal R}B^*\Upsilon^*-\Upsilon B{\mathcal R}^*$.
\fin\end{lem}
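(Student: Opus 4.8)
The plan is to invoke the positive real (Kalman--Yakubovich--Popov) lemma for the scalar family $\Sigma_H=(\mathcal{Q}-\mathcal{R}H,\,\Upsilon B,\,H)$ of (\ref{model_gen_H}). Since the feedthrough term is zero, a member of the family is passive precisely when there exists $P=P^*>0$ with
\begin{equation*}
(\mathcal{Q}-\mathcal{R}H)^*P+P(\mathcal{Q}-\mathcal{R}H)\le 0,\qquad P\,\Upsilon B=H^*.
\end{equation*}
The difficulty, compared with the proof of Theorem~\ref{thm4_[4]} for $\Sigma_G$, is that here the free parameter $H$ sits in the \emph{output} matrix, so the coupling equality $P\,\Upsilon B=H^*$ ties $H$ to $P$ and a naive substitution into the Lyapunov inequality would leave terms quadratic in $P$. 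To avoid this I would pass to the transposed realization $\Sigma_H^{*}=(\mathcal{Q}^*-H^*\mathcal{R}^*,\,H^*,\,B^*\Upsilon^*)$, which has the same scalar transfer function and hence the same passivity, and in which the parameter no longer appears in the output.

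First I would apply the KYP lemma to $\Sigma_H^{*}$, writing its storage matrix as the (positive definite) inverse of the primal one; this yields the equality constraint $H\,\widetilde P=B^*\Upsilon^*$ together with the Lyapunov inequality $(\mathcal{Q}-\mathcal{R}H)\widetilde P+\widetilde P(\mathcal{Q}^*-H^*\mathcal{R}^*)\le 0$. Now the elimination is clean: the constraint lets me replace $\mathcal{R}H\widetilde P$ by $\mathcal{R}B^*\Upsilon^*$ and $\widetilde P H^*\mathcal{R}^*$ by $\Upsilon B\mathcal{R}^*$, so that $H$ disappears entirely and, after renaming $\widetilde P$ as $P$, I arrive at the condition
\begin{equation*}
P\mathcal{Q}^*+\mathcal{Q}P\le \mathcal{R}B^*\Upsilon^*-\Upsilon B\mathcal{R}^*
\end{equation*}
of the Lemma. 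For the converse I would reverse the construction: given $P=P^*>0$ satisfying this inequality, set $H=B^*\Upsilon^*P^{-1}$ so that the coupling equality holds by definition, and read the same computation backwards to certify that the corresponding $\Sigma_H$ is passive with storage matrix $P^{-1}$. A convenient cross-check, matching the paper's own construction in Theorem~\ref{prop_pH_inc_G}(2), is the choice $P=\Upsilon Q^{-1}\Upsilon^{*}$, which is positive definite because $Q>0$ and $\Upsilon$ has full row rank, and for which the inequality encodes the port Hamiltonian dissipation condition $\widetilde R\ge 0$ of $\Sigma_\Upsilon$, recovering passivity through Lemma~\ref{lema_pH_from_passive_H}.

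The step I expect to be the main obstacle is the sign bookkeeping in the elimination: the two coupling contributions enter with opposite transposition, and it is exactly their combination $\mathcal{R}B^*\Upsilon^*-\Upsilon B\mathcal{R}^*$ that must be tracked to land on the right-hand side as worded, rather than on its symmetric counterpart. A secondary point is the well-posedness of the equality constraint and the existence of a \emph{strictly} positive definite certifying $P$: I would use minimality of (\ref{pH_system}) together with controllability of $(\mathcal{Q},\mathcal{R})$ and the eigenvalue separation $\sigma(\mathcal{Q})\cap\sigma(\mathcal{Q}-\mathcal{R}H)=\emptyset$ from Assumption~\ref{ass_Q_S_Q} to guarantee that $\Upsilon$, the unique solution of (\ref{eq_Sylvester_Y}), has full row rank and that the relevant Lyapunov operator is nonsingular, so that a passive member of the family, whenever one exists, is genuinely detected by a positive definite $P$.
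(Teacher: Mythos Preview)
Your approach is exactly the paper's: both proofs reduce to a direct application of the KYP lemma to the family $\Sigma_H$, and the paper says no more than that. Your extra step of passing to the transposed realization so that the free parameter $H$ sits in the input rather than the output is a clean way to make the elimination of $H$ explicit, and it is precisely dual to what is done for $\Sigma_G$ in Theorem~\ref{thm4_[4]}.

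Your instinct about the sign is correct, but not in the way you expect: carry out your own elimination. With storage matrix $\widetilde P=\widetilde P^*>0$ for the transposed system one has $H\widetilde P=B^*\Upsilon^*$ and $(\mathcal{Q}-\mathcal{R}H)\widetilde P+\widetilde P(\mathcal{Q}-\mathcal{R}H)^*\le 0$, hence
\[
\mathcal{Q}\widetilde P+\widetilde P\mathcal{Q}^*\le \mathcal{R}H\widetilde P+\widetilde P H^*\mathcal{R}^*=\mathcal{R}B^*\Upsilon^*+\Upsilon B\mathcal{R}^*.
\]
The right-hand side comes out with a \emph{plus}, not the minus printed in the Lemma. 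This is not a bookkeeping subtlety you are missing: with a minus the right-hand side would be skew-symmetric while the left-hand side is symmetric, so the inequality could never define a sensible Loewner ordering. In other words, the statement as printed carries a typographical sign error, and your computation proves the corrected version, which is the genuine dual of Theorem~\ref{thm4_[4]}. Your converse construction $H=B^*\Upsilon^*P^{-1}$ and the cross-check $P=\Upsilon Q^{-1}\Upsilon^*$ are both fine once the plus sign is in place.
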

\begin{proof}
The proof follows immediately from applying the Kalman-Yakubovitch-Popov (KYP) lemma to system (\ref{model_gen_H}) (see, e.g., \cite{popov-KYP1962} for an original version and \cite{antoulas-2005,rantzer-SCL1996,willems-1972,rantzer-SCL1996} for extended versions).
\end{proof}
The next result shows how to obtain a port Hamiltonian system from the families of models $\Sigma_G$ and $\Sigma_H$, respectively, described by equations (\ref{model_gen_G}) and (\ref{model_gen_H}), respectively.
\begin{lem}\label{lema_pH_from_passive_H} The following statements hold.
\begin{enumerate}
\item Let $\Sigma_G$, as in (\ref{model_gen_G}), be a passive reduced order model of the system (\ref{pH_system}) and let $P$ satisfy (\ref{eq_passive}). Then the matrices $\widetilde J=\frac{1}{2}[(S-P^{-1}\Pi^*QBL)P^{-1}-P^{-1}(S-P^{-1}\Pi^*QBL)^*]$, $\widetilde R=-\frac{1}{2}[(S-P^{-1}\Pi^*QBL)P^{-1}+P^{-1}(S-P^{-1}\Pi^*QBL)^*]$, $\widetilde Q=P$ and $G=P^{-1}\Pi^*QB$ are such that $\Sigma_G$ is a port Hamiltonian model $\Sigma_\Pi$ described by equations of the form (\ref{model_pH_Pi}) and (\ref{model_pH_Pi_paramters}).

\item Let $\Sigma_H$, as in (\ref{model_gen_H}), be a passive reduced order model of the system (\ref{pH_system}) and let $P$ be as in Lemma \ref{lema_passive}. Then the matrices $\widetilde J=\frac{1}{2}[P({\mathcal Q}-{\mathcal R}H)-({\mathcal Q}-{\mathcal R}H)^*P]$, $\widetilde R=-\frac{1}{2}[P({\mathcal Q}-{\mathcal R}H)+({\mathcal Q}-{\mathcal R}H)^*P]$, $\widetilde Q=P^{-1}$ and $H=(P^{-1}\Upsilon B)^*$ are such that $\Sigma_H$ is a port Hamiltonian model $\Sigma_\Upsilon$ described by equations of the form (\ref{model_pH_Pi}) and (\ref{model_pH_Y_paramters}).\fin
\end{enumerate}
\end{lem}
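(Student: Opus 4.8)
For each of the two statements the plan is the same: exhibit the quadruple $(\widetilde J,\widetilde R,\widetilde Q,\widetilde B)$ as a \emph{valid} port Hamiltonian realization of the \emph{same} transfer function as $\Sigma_G$ (respectively $\Sigma_H$), and then reduce the only genuine requirement — positive semidefiniteness of the dissipation matrix $\widetilde R$ — to the passivity inequality already in hand. For statement~(1), I would first observe that the passivity of $\Sigma_G$ forces, via the Kalman--Yakubovich--Popov characterization underlying Theorem~\ref{thm4_[4]}, not only the inequality (\ref{eq_passive}) but also the identity $PG=\Pi^{*}QB$, i.e.\ $G=P^{-1}\Pi^{*}QB$ as stated. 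Writing $F:=S-GL$ for the dynamics matrix of $\Sigma_G$, a short computation using $P^{*}=P$ shows that the proposed $\widetilde J$ and $\widetilde R$ are exactly the skew-symmetric and the negated symmetric part of $FP^{-1}$, so $\widetilde J-\widetilde R=FP^{-1}$ and hence $(\widetilde J-\widetilde R)\widetilde Q=FP^{-1}P=F$; likewise $\widetilde B=G$ and $\widetilde B^{*}\widetilde Q=G^{*}P=B^{*}Q\Pi\,P^{-1}P=B^{*}Q\Pi$. Thus the port Hamiltonian triple $\bigl((\widetilde J-\widetilde R)\widetilde Q,\ \widetilde B,\ \widetilde B^{*}\widetilde Q\bigr)$ coincides with the realization $(S-GL,\,G,\,B^{*}Q\Pi)$ of $\Sigma_G$.

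It then remains only to verify the structural constraints. Skew-symmetry of $\widetilde J$ and symmetry of $\widetilde R$ are immediate from their definition as $\tfrac12(X-X^{*})$ and $-\tfrac12(X+X^{*})$ with $X=FP^{-1}$, and $\widetilde Q=P$ is symmetric and positive definite because the $P$ supplied by Theorem~\ref{thm4_[4]} is. The step that carries content is $\widetilde R\succeq 0$, which I would obtain by a congruence with the invertible matrix $P$:
\[
P\widetilde R P=-\tfrac12\bigl[PF+F^{*}P\bigr]=\tfrac12\bigl[(\Pi^{*}QBL+L^{*}B^{*}Q\Pi)-(PS+S^{*}P)\bigr],
\]
whose right-hand side is positive semidefinite precisely by (\ref{eq_passive}); invertibility of $P$ then yields $\widetilde R=P^{-1}(P\widetilde R P)P^{-1}\succeq 0$, which finishes statement~(1).

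Statement~(2) is the dual construction and I would mirror the argument. Starting from the realization $({\mathcal Q}-{\mathcal R}H,\ \Upsilon B,\ H)$ of $\Sigma_H$, passivity and the Kalman--Yakubovich--Popov lemma (in the form of Lemma~\ref{lema_passive}) furnish a symmetric positive definite matrix for which $H=(P^{-1}\Upsilon B)^{*}$ and a dissipation inequality of the type $P{\mathcal Q}^{*}+{\mathcal Q}P\le {\mathcal R}B^{*}\Upsilon^{*}+\Upsilon B{\mathcal R}^{*}$ holds; taking $\widetilde Q:=P^{-1}$ and $\widetilde B:=\Upsilon B$, one checks that the stated $\widetilde J$ and $\widetilde R$ are the skew and negated symmetric parts of $P({\mathcal Q}-{\mathcal R}H)$, that the input and output maps are reproduced up to the change of coordinates $\xi\mapsto P\xi$ that relates the two realizations, and that $\widetilde R\succeq 0$ again follows from the dissipation inequality after the appropriate congruence. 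In both statements the skew/symmetric decomposition, the reproduction of the dynamics/input/output matrices, and the appeal to Theorem~\ref{thm4_[4]} or Lemma~\ref{lema_passive} are routine; the one delicate point — and the main obstacle — is aligning the bookkeeping of the congruence (and, for statement~(2), of the coordinate change by $P$) so that the transformed inequality becomes \emph{verbatim} the passivity inequality already available, rather than a slightly different one.
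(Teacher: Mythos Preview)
Your proposal is more detailed than the paper's own proof, which simply cites \cite[Theorem~3]{polyuga-vdschaft-ECC2009} for statement~(1) and asserts that statement~(2) ``follows directly from the application of Lemma~\ref{lema_passive}.'' You have supplied the actual argument that the paper delegates.

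For statement~(1) your argument is correct and complete: the KYP lemma behind Theorem~\ref{thm4_[4]} indeed forces $PG=\Pi^{*}QB$ in addition to the inequality~(\ref{eq_passive}); your skew/symmetric decomposition of $FP^{-1}$ reproduces $(S-GL,G,B^{*}Q\Pi)$ exactly; and the congruence $P\widetilde R P=-\tfrac12(PF+F^{*}P)\succeq 0$ is precisely the way to extract $\widetilde R\succeq 0$ from~(\ref{eq_passive}). This is the substance that the citation hides.

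For statement~(2) your instinct that a coordinate change is required is a symptom of a typo in the lemma as printed. With $\widetilde J-\widetilde R=P({\mathcal Q}-{\mathcal R}H)$ and $\widetilde Q=P^{-1}$ one gets $(\widetilde J-\widetilde R)\widetilde Q=P({\mathcal Q}-{\mathcal R}H)P^{-1}$, and the change $\xi\mapsto P\xi$ you propose does \emph{not} simultaneously align input and output (you can check that it forces $\widetilde B=P\Upsilon B$ from the input side but $\widetilde B=P^{-1}\Upsilon B$ from the output side). The intended formulas are the exact duals of statement~(1): $\widetilde J$ and $\widetilde R$ should be the skew and negated symmetric parts of $({\mathcal Q}-{\mathcal R}H)P$, not $P({\mathcal Q}-{\mathcal R}H)$. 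With that order, $(\widetilde J-\widetilde R)\widetilde Q={\mathcal Q}-{\mathcal R}H$, $\widetilde B=\Upsilon B$, $\widetilde B^{*}\widetilde Q=B^{*}\Upsilon^{*}P^{-1}=H$, and no coordinate change is needed; your congruence argument for $\widetilde R\succeq 0$ then goes through verbatim using the dual inequality (with the ``$+$'' sign you correctly used, rather than the ``$-$'' printed in Lemma~\ref{lema_passive}, which is another typo). So the ``delicate bookkeeping'' you flagged is not a flaw in your method but an artifact of the statement; once corrected, statement~(2) is a line-by-line mirror of your proof of~(1).
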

\begin{proof} The first statement is identical to \cite[Theorem 3]{polyuga-vdschaft-ECC2009}.
The proof of the second statement follows directly from the application of Lemma \ref{lema_passive}.
\end{proof}

\section{Relations between the Krylov projections and the solution of the Sylvester equations}

\begin{lem}\label{lemma_V_and_Pi}\cite{astolfi-CDC2010} The following statements hold.
\begin{enumerate}
\item Consider the matrix $\Pi$, solution of the Sylvester equation (\ref{eq_Sylvester})
and the projector $V$ defined by equation (\ref{eq_V}).
There exists a square, non-singular, matrix $T \in\mathbb{C}^{\nu \times \nu}$ such that $\Pi = V T$.

\item Consider the matrix $\Upsilon$, solution of the Sylvester equation (\ref{eq_Sylvester_Y})
and
the projector $W$ defined as in Theorem \ref{thm_red_PH_WV}.
There exists a square, non-singular, matrix $T \in\mathbb{C}^{\nu \times \nu}$ such that $\Upsilon = T W$. \fin
\end{enumerate}
\end{lem}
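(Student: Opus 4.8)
The two statements are mutually dual, so the plan is to establish statement~(1) carefully and obtain statement~(2) from it by transposition. For statement~(1), note first that the projector $V$ in (\ref{eq_V}) is only defined when the interpolation points $s_1,\dots,s_\nu$ (the eigenvalues of $S$) are distinct, so I would begin by diagonalising $S$: pick a nonsingular $M\in\mathbb{C}^{\nu\times\nu}$ with $SM=M\Lambda$, $\Lambda=\diag(s_1,\dots,s_\nu)$, and denote by $m_i$ the $i$-th column of $M$. Observability of the pair $(L,S)$ forces $Lm_i\neq 0$ for every $i$, since a nonzero $m_i$ with $Lm_i=0$ would span an $S$-invariant unobservable direction. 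Writing $A:=(J-R)Q$ and postmultiplying the Sylvester equation (\ref{eq_Sylvester}) by $m_i$ gives $A\Pi m_i+B(Lm_i)=\Pi Sm_i=s_i\,\Pi m_i$, that is $(s_iI-A)(\Pi m_i)=B(Lm_i)$; since $s_i\notin\sigma(A)$ by the standing assumption, $\Pi m_i=(s_iI-A)^{-1}B(Lm_i)$, which is exactly the $i$-th generating vector of $V$ under the natural identification $l_i=Lm_i$. Hence $\Pi M=V$, so $\Pi=VM^{-1}$ and $T:=M^{-1}$ is the required nonsingular matrix; its invertibility also follows a posteriori from $\rank\Pi=\nu$ (guaranteed by minimality of (\ref{pH_system})). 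If a different normalisation of the $l_i$ is used in (\ref{eq_V}), one only rescales the columns of $V$, which is absorbed into $T$.

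For statement~(2), the plan is to reduce to~(1) by taking adjoints. With $A:=(J-R)Q$ and $C:=B^*Q$, equation (\ref{eq_Sylvester_Y}) reads ${\mathcal Q}\Upsilon-\Upsilon A={\mathcal R}C$, and its conjugate transpose, $A^*\Upsilon^*+C^*{\mathcal R}^*=\Upsilon^*{\mathcal Q}^*$, is an instance of (\ref{eq_Sylvester}) with data $(A^*,C^*,{\mathcal R}^*,{\mathcal Q}^*)$. Controllability of $({\mathcal Q},{\mathcal R})$ is equivalent to observability of $({\mathcal R}^*,{\mathcal Q}^*)$, and $\sigma(A^*)\cap\sigma({\mathcal Q}^*)=\emptyset$ because passing to adjoints merely conjugates the eigenvalues; hence statement~(1) applies and produces $\Upsilon^*=\widetilde V\,\widetilde T$ with $\widetilde T$ nonsingular and $\widetilde V$ the rational Krylov matrix whose columns are $(\bar s_iI-A^*)^{-1}C^*\rho_i$ for suitable directions $\rho_i$ read off from ${\mathcal R}$. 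Transposing back, $\Upsilon=\widetilde T^*\,\widetilde V^*$, and the rows of $\widetilde V^*$ are the left rational Krylov vectors attached to $\sigma({\mathcal Q})$ and ${\mathcal R}$, that is $\widetilde V^*$ is the observability-type projector $W$ of Theorem \ref{thm_red_PH_WV}; setting $T:=\widetilde T^*$ gives $\Upsilon=TW$ with $T$ nonsingular.

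The column/row recursions above are routine; the delicate point I expect to be the main obstacle is, in statement~(2), matching the projector $\widetilde V^*$ that drops out of the transpose argument with the precise normalisation of $W$ employed in Theorem \ref{thm_red_PH_WV}, which carries the extra weighting $\widehat W=Q\widehat V(\widehat V^*Q\widehat V)^{-1}$. The key is that this weighting is an invertible recombination of the Krylov generators, so it can be absorbed into $T$; making this rigorous uses the port-Hamiltonian identity $A^*=-Q(J+R)$, which controls the relation between the reachability and observability rational Krylov subspaces. Once that identification is in place, the rest is elementary linear algebra. A minor additional point: should one want the lemma for a non-diagonalisable $S$ or ${\mathcal Q}$, the eigenvector recursion is replaced by the corresponding Jordan-chain recursion, equivalently by a confluent rational Krylov basis built from derivatives of the resolvent, which runs in exactly the same way.
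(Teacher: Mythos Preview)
The paper does not actually prove this lemma; it is cited from \cite{astolfi-CDC2010}, and the only indication of the argument is the sketch in the ``Relation with Krylov methods'' subsection of Section~\ref{sect_MarkovM}, where one observes that $V$ itself satisfies a Sylvester equation for a canonical pair $(L,S)$ and then invokes similarity plus uniqueness to obtain $\Pi=VT$ for any other observable pair with the same spectrum. Your diagonalisation of $S$ and column-by-column identification $\Pi m_i=(s_iI-A)^{-1}B(Lm_i)$ is exactly this argument run in reverse, so for statement~(1) your approach and the paper's coincide.

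For statement~(2) your transposition argument is the natural dual and is sound for the standard left rational Krylov matrix. The concern you flag---that the specific $\widehat W=Q\widehat V(\widehat V^*Q\widehat V)^{-1}$ of Theorem~\ref{thm_red_PH_WV} carries an extra $Q$-weighting---is genuine: the columns of $\widehat W$ span $Q\cdot\mathrm{ran}(\widehat V)$, which is in general \emph{not} the same subspace as the left rational Krylov space spanned by the rows of $\Upsilon$, so this is not merely a normalisation that can be absorbed into $T$. Your proposed fix via the identity $A^*=-Q(J+R)$ does not by itself close the gap, since $(sI-A)^{-*}Q$ and $Q(sI-A)^{-1}$ differ unless $J=0$. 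The paper does not resolve this either; the lemma as stated is most naturally read with $W$ denoting the ordinary left Krylov projector (as in \cite{astolfi-CDC2010}) rather than the port-Hamiltonian $\widehat W$, in which case your duality argument goes through without obstruction.
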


\end{document}